\def\IZ{{\mathbb Z}}
\def\IR{{\mathbb R}}
\def\IP{{\mathbb P}}
\def\n{\noindent}
\def\dis{\displaystyle}
\def\RSsubtxt{section~}\newref{sub}{name = \RSsubtxt}}
\def\RSthmtxt{theorem~}\newref{thm}{name = \RSthmtxt}}
\def\RSlemtxt{lemma~}\newref{lem}{name = \RSlemtxt}}
\numberwithin{equation}{section}
\numberwithin{figure}{section}
\theoremstyle{plain}
\newtheorem{thm}{\protect\theoremname}[section]
  \theoremstyle{plain}
  \newtheorem{lem}[thm]{\protect\lemmaname}
  \theoremstyle{plain}
  \newtheorem{cor}[thm]{\protect\corollaryname}
  \theoremstyle{plain}
  \newtheorem{prop}[thm]{\protect\propositionname}
\newtheorem{remark}[thm]{Remark}
\theoremstyle{remark}
\title{\large{\textbf{A LOWER BOUND FOR DISCONNECTION \linebreak BY RANDOM INTERLACEMENTS}}}
\date{}
  \providecommand{\corollaryname}{Corollary}
  \providecommand{\lemmaname}{Lemma}
  \providecommand{\propositionname}{Proposition}
\providecommand{\theoremname}{Theorem}
\begin{document}
\maketitle \thispagestyle{empty}
\begin{center} \vspace{-0.7cm}  Xinyi Li and Alain-Sol Sznitman
\end{center} \vspace{1.0cm}
\begin{center}
\end{center}
\begin{abstract}
\centering
\begin{minipage}{0.9\textwidth}
We consider the vacant set of random interlacements on $\mathbb{Z}^d$, $d \geq 3$, in the percolative regime. Motivated by the large deviation principles recently obtained in \cite{LiSzni13},  we investigate the asymptotic behavior of the probability that a large body gets disconnected from infinity by the  random interlacements. We derive an asymptotic lower bound, which brings into play tilted interlacements,  and relates the problem to some of the large deviations of the occupation-time profile considered in \cite{LiSzni13}.
\end{minipage} \end{abstract}

\vfill
\noindent
{\small
-------------------------------- \\
Departement Mathematik, ETH Z\"urich, CH-8092 Z\"urich, Switzerland. \\
This research was supported in part by the grant ERC-2009-AdG 245728-RWPERCRI.
}

\newpage
\thispagestyle{empty} \mbox{}
\newpage  \pagestyle {plain}

\section{Introduction}

Random interlacements constitute a percolation model with long-range dependence, and the percolative properties of their vacant set play an important role in the investigation of several questions of disconnection or fragmentation created by random walks, see \cite{CernTeixWind11}, \cite{Szni09b}, \cite{TeixWind11}. Here, we consider random interlacements on $\mathbb{Z}^{d}$, $d\ge3$. It is by now well-known that as one increases the level $u$ of the interlacements, the percolative properties of the vacant set undergo a phase transition, and the model evolves from a percolative phase to a non-percolative phase, see \cite{Szni10a} and \cite{SidoSzni09}. In the present work, we are mainly interested in the percolative phase of the model, and we derive an asymptotic lower bound on the probability that a macroscopic body has no connection to infinity in the vacant set. Strikingly, this lower bound corresponds to certain large deviations of the occupation-time profile of random interlacements investigated in our previous work \cite{LiSzni13},
where we analyzed the exponential decay of the probability that a macroscopic body gets insulated by high values of the (regularized) occupation-time profile.

\medskip
We now describe the model and our results in a more precise fashion. We refer to Section 1 for precise definitions. We consider continuous-time random interlacements on $\mathbb{Z}^{d}$, $d\geq3$. We denote by $\mathbb{P}_{u}$ the canonical law of random interlacements at level $u>0$, and by $\mathcal{I}^{u}$ and $\mathcal{V}^{u} = \mathbb{Z}^{d}\backslash\mathcal{I}^{u}$ the corresponding interlacement set and vacant set. It is known that there is a critical value $u_{**}\in(0,\infty)$, which can be characterized as the infimum of the levels $u>0$ for which the probability that the vacant cluster at the origin reaches distance $N$ from the origin has a  stretched exponential decay in $N$, see \cite{SidoSzni10}. It is an important open question whether $u_{**}$ actually coincides with the critical level $u_{*}$ for the percolation of the vacant set (but it is a simple fact that $u_{*} \le u_{**}$).

\medskip
In this work, we are primarily interested in the percolative regime of the vacant set, but, specifically, we assume that $0 < u \le u_{**}$ (because our lower bound on disconnection actually provides information in this possibly wider range of levels).

\medskip
We consider a compact subset $K$ of $\mathbb{R}^{d}$, and its discrete blow-up:
\begin{equation}\label{eq:blowupdef}
K_{N}=\{x\in\mathbb{Z}^{d};~d_{\infty}(x,NK)\le1\},
\end{equation}
where $NK$ denotes the homothetic of ratio $N$ of the set $K$, and $d_{\infty}(z,NK)=\inf_{y\in NK}|z-y|_{\infty}$ stands for the sup-norm distance of $z$ to $NK$.
Of central interest for us is the event stating that $K_N$ is not connected to infinity in $\mathcal{V}^{u}$, which we denote by
\begin{equation}\label{eq:ANdef}
A_{N}=\{K_{N}\; \overset{\mathcal{V}^{u}}{\mbox{\Large $\nleftrightarrow$}}\; \infty\}.
\end{equation}

\medskip\n
The main result of this article is the following asymptotic lower bound.
\begin{thm}\label{thm:lowerbound}
For $ u\in(0,u_{**}]$ one has
\begin{equation}\label{eq:mainthm}
\liminf_{N\to\infty}\frac{1}{N^{d-2}}\log(\mathbb{P}_{u}[A_{N}])\ge-\frac{1}{d}(\sqrt{u_{**}}-\sqrt{u})^{2}\mathrm{cap}_{\mathbb{R}^{d}}(K),
\end{equation}
where $\mathrm{cap}_{\mathbb{R}^{d}}(K)$ stands for the Brownian capacity of $K$.
\end{thm}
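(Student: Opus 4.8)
The strategy is a change-of-measure (tilting) argument. To produce a lower bound on $\mathbb{P}_u[A_N]$, I would construct a modified interlacement process — ``tilted interlacements'' — under whose law the event $A_N$ is typical, and then pay the relative entropy cost of this tilt. Concretely, fix a bounded open neighborhood $U$ of $K$ in $\mathbb{R}^d$ and a smooth profile: a function that raises the effective level of the interlacements from $u$ to roughly $u_{**}$ on a mesoscopic shell around $K_N$, thereby forming an insulating ``fence'' of occupied sites that disconnects $K_N$ from infinity with probability tending to $1$. The natural object is an inhomogeneous interlacement whose intensity measure is multiplied by a space-dependent factor $\sigma(\cdot/N)^2$ (or, in the language of \cite{LiSzni13}, an interlacement whose trajectories are reweighted so the occupation-time profile matches a target profile $\approx u_{**}$ on the shell and $\approx u$ far away). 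Writing $\mathbb{Q}_N$ for the law of this tilted process, the entropy inequality gives
\begin{equation*}
\log \mathbb{P}_u[A_N] \;\ge\; \mathbb{Q}_N[A_N]\,\log\mathbb{Q}_N[A_N] \;-\; H(\mathbb{Q}_N\,|\,\mathbb{P}_u) \;-\; \tfrac{1}{e},
\end{equation*}
so once $\mathbb{Q}_N[A_N]\to 1$ it remains to show $\limsup_N N^{-(d-2)}H(\mathbb{Q}_N\,|\,\mathbb{P}_u) \le \frac1d(\sqrt{u_{**}}-\sqrt u)^2\,\mathrm{cap}_{\mathbb{R}^d}(K)$.

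The key steps, in order, are the following. \textbf{(1) Choice of the tilting profile.} Select a family of profiles supported near $K$, interpolating between level $u_{**}$ on a thin shell enclosing $K$ and level $u$ outside; optimize over such profiles so that the resulting entropy cost approaches the capacity of $K$ with the correct constant. This is where the variational problem from \cite{LiSzni13} enters: the functional governing the large deviations of the occupation-time profile, when minimized over profiles that exceed $u_{**}$ on a surface surrounding $K$, yields exactly $\frac1d(\sqrt{u_{**}}-\sqrt u)^2\mathrm{cap}_{\mathbb{R}^d}(K)$ (by the known electrostatic computation: the optimal profile is harmonic-type in the gap and the cost scales like $\int|\nabla\sqrt{\rho}|^2$, which on the annulus between $K$ and a slightly larger set is minimized in the limit by the equilibrium potential, giving the capacity). \textbf{(2) Entropy estimate.} Compute $H(\mathbb{Q}_N\,|\,\mathbb{P}_u)$ for the tilted interlacement. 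Because interlacements are a Poisson point process on the space of trajectories modulo time-shift with an explicit intensity measure $u\,\nu$, multiplying the intensity by a density $f_N$ gives a relative entropy $\int (f_N\log f_N - f_N + 1)\,u\,d\nu$, which can be expanded to leading order in terms of the chosen profile; matching the scaling $N^{d-2}$ and passing to the continuum limit reproduces the \cite{LiSzni13} functional evaluated at the chosen profile. \textbf{(3) Disconnection under $\mathbb{Q}_N$.} Show $\mathbb{Q}_N[A_N]\to 1$: under the tilted law the occupation time on the mesoscopic shell around $K_N$ is, with overwhelming probability, close to its (raised) mean $\approx u_{**}$, hence by a local connectivity/sprinkling argument the occupied set contains a $*$-connected surface separating $K_N$ from infinity. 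Here one uses that at level $u_{**}$ (or just above it) the vacant set is subcritical with stretched-exponential decay, so the complementary ``holes'' in the fence are too small to let a vacant path escape. \textbf{(4) Assemble} via the entropy inequality and let $N\to\infty$, then optimize the profile to extract the capacity.

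\textbf{Main obstacle.} The hard part will be Step (3): controlling the geometry of the tilted interlacement near the shell well enough to \emph{guarantee} disconnection with probability tending to $1$. The difficulty is twofold — first, one must transfer a statement about the occupation-time \emph{profile} (a coarse-grained, averaged quantity, which is what \cite{LiSzni13} controls) into a statement about the actual \emph{microscopic} occupied set forming a blocking surface; this requires a sprinkling/renormalization argument interfacing the continuum profile with discrete connectivity. Second, the tilt is spatially inhomogeneous and has long-range correlations, so standard interlacement tools (which often assume translation invariance) must be adapted; in particular one needs quantitative control that the ``local level'' of the tilted process near the shell dominates $u_{**}$ on a set that is dense enough, uniformly, which likely forces the shell to have mesoscopic (not microscopic) width and a carefully tuned profile so that the entropy cost still converges to the capacity. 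A secondary technical point is justifying the continuum limit of the entropy functional and its identification with the \cite{LiSzni13} rate function, including the convergence of discrete capacities of $K_N$ to $\mathrm{cap}_{\mathbb{R}^d}(K)$, but this is expected to be routine given the machinery already developed there.
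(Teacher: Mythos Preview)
Your overall strategy is correct and matches the paper's: construct a tilted interlacement law $\widetilde{\mathbb{P}}_N$ via a profile $f$ built from the (smoothed, blown-up) equilibrium potential of a neighborhood of $K$, show $\widetilde{\mathbb{P}}_N[A_N]\to 1$, apply the entropy inequality, and identify the limiting relative entropy with $\frac{1}{d}(\sqrt{u_{**}}-\sqrt{u})^2\,\mathrm{cap}_{\mathbb{R}^d}(K)$ via a discrete Dirichlet form $\to$ continuum Dirichlet form $\to$ capacity computation. Your identification of the optimal profile as harmonic-type (the equilibrium potential) and of the entropy as a Dirichlet energy are both exactly what the paper does (Propositions~2.3 and~2.4).

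The one substantive difference is in how Step~(3) is carried out. You propose to control the occupation-time \emph{profile} under $\widetilde{\mathbb{P}}_N$ and then transfer this, via sprinkling/renormalization, into a statement about the microscopic occupied set; you correctly flag this transfer as the main obstacle. The paper sidesteps the occupation-time profile entirely at this stage and works directly with the \emph{interlacement set}: it shows that for each mesoscopic box $B_1$ centered on the fence $\Gamma^N$, the tilted equilibrium measure dominates the standard one pointwise, $u\,\widetilde{e}_{B_1}(z)\ge (u_{**}+\epsilon/4)\,e_{B_1}(z)$ (Proposition~3.3, reduced via sweeping to a capacity comparison on a larger mesoscopic ball, Proposition~3.1). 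This pointwise domination feeds a coupling (Proposition~4.1, using Belius' result) so that $\mathcal{I}^u\cap B_1$ under $\widetilde{\mathbb{P}}_N$ stochastically dominates $\mathcal{I}^{u_{**}+\epsilon/8}\cap B_1$ up to a stretched-exponential error; disconnection then follows immediately from the known decay of vacant-set connectivity above $u_{**}$. This route avoids the profile-to-geometry transfer you anticipated as delicate, at the cost of the equilibrium measure comparison, and it exploits the specific Doob-transform structure of the tilt (the tilted walk coincides with simple random walk inside $K_N^\delta$, where $f$ is constant), which is a bit sharper than the generic ``multiply the intensity by $\sigma(\cdot/N)^2$'' picture you sketch.
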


In essence, the lower bound (\ref{eq:mainthm}) replicates the asymptotic behavior of the probability that the regularized occupation-time profile of random interlacements insulates $K$ by values exceeding $u_{**}$, see Theorems 6.2 and 6.4, as well as Remarks 6.5 2) and  6.5 5) of \cite{LiSzni13}. It is a remarkable feature that such large deviations of the occupation-time profile induce a ``thickening'' of the interlacement surrounding $K_N$, rather than a mere change of the clocks governing the time spent by the trajectories defining the interlacement. This thickening is potent enough to typically disconnect $K_N$ from infinity. We refer to Remark \ref{rem2.5} for more on this topic. It is of course an important question, whether there is a matching upper bound to (\ref{eq:mainthm}), when $K$ is a smooth compact, and whether the large deviations of the occupation-time profile capture the main mechanism through which $\mathcal{I}^{u}$ disconnects a macroscopic body from infinity.

\medskip
Incidentally, the tilted interlacements, which we heavily use in this work, come up as a kind of slowly space-modulated random interlacements. Possibly, they offer, in a discrete set-up, a microscopic model for the type of ``Swiss cheese'' picture advocated in \cite{VandBoltHoll01}, when studying the moderate deviations of the volume of the Wiener sausage (however the relevant modulating functions in \cite{VandBoltHoll01} and in the present work correspond to distinct variational problems and are different).

\medskip
One may also compare Theorem \ref{thm:lowerbound} to corresponding
results for supercritical Bernoulli percolation. Unlike what happens in the present set-up, disconnecting a large macroscopic body in the percolative phase (when $K$ is a smooth compact) would involve an exponential cost proportional to $N^{d-1}$, in the spirit of the study of the existence of a large finite cluster at the origin, see p.~216 of  \cite{Grim99}, or Theorem 2.5, p.~16 of \cite{Cerf00}.

\medskip
Further, it is interesting to note that when $u\to0$, the right-hand
side of (\ref{eq:mainthm}) has a finite limit. One may wonder about the relation of
this limit to what happens in our original problem when one
replaces $\mathcal{I}^{u}$ by a single random walk trajectory (starting
for instance at the origin), that is, when we consider the probability
that $K_{N}$ is disconnected from infinity by the trajectory of one single random walk starting at the origin.
We refer to Remark \ref{endremark} 2) for more on this question.

\medskip
We briefly comment on the proofs. The main strategy is to use a change
of probability and an entropy bound. We construct through fine-tuned Radon-Nikodym derivatives
new measures $\widetilde{\mathbb{P}}_{N}$ corresponding to ``tilted random
interlacements'', which have the crucial property that under $\widetilde{\mathbb{P}}_{N}$ the disconnection probability tends
to 1 as $N$ goes to infinity:
\begin{equation}\label{0.4}
\widetilde{\mathbb{P}}_{N}[A_{N}]\to1.
\end{equation}
Then, by a classical inequality (see (\ref{eq:Entropychange})), one has
a lower bound for the disconnection probability in terms of the relative entropy:
\begin{equation}\label{0.5}
\liminf_{N\to\infty}\frac{1}{N^{d-2}}\log(\mathbb{P}_{u}[A_{N}])\geq-\limsup_{N\to\infty}\frac{1}{N^{d-2}}H(\widetilde{\mathbb{P}}_{N}|\mathbb{P}_{u}).
\end{equation}

\n
We relate the relative entropy of $\widetilde{\mathbb{P}}_{N}$ with respect to $\mathbb{P}_{u}$,  to the Brownian capacity of $K$, and show in Propositions \ref{prop:entropycalc} and \ref{prop:limsup} that
\begin{equation}\label{0.6}
\widetilde{\lim}\frac{1}{N^{d-2}}H(\widetilde{\mathbb{P}}_{N}|\mathbb{P}_{u}) = -\frac{1}{d}(\sqrt{u_{**}}-\sqrt{u})^{2}\mathrm{cap}_{\mathbb{R}^{d}}(K)
\end{equation}
(where $\widetilde{\lim}$ refers to certain successive limiting procedures involving $N$ first, and then various auxiliary parameters entering the construction of $\widetilde{\mathbb{P}}_{N}$).

\medskip
The measure $\widetilde{\mathbb{P}}_{N}$ governing the tilted interlacements is constructed in Section 2. Intuitively, it forces a ``local level'' of interlacements corresponding to $u_{**}+\epsilon$,  in a ``fence'' surrounding $K_N$. This creates a strongly non-percolative region surrounding $K_N$ and leads to (\ref{0.4}). Of course, a substantial part of the work is to make sense of the above heuristics. This goes through a local comparison at a mesoscopic scale between the occupied set of tilted interlacements and standard interlacements at a level exceeding $u_{**}$.

\medskip
In particular, we show in Proposition \ref{prop:coupling} that for all mesoscopic boxes $B_1$, with size $N^{r_1}$ (with $r_1$ small) and center in $\Gamma^{N}$, a ``fence'' around $K_N$, one has a coupling $\bar{Q}$ between $\mathcal{I}_{1}$, distributed as $\mathcal{I}^{u_{**}+\epsilon/8}\cap B_{1}$, and $\widetilde{\mathcal{I}}$, distributed as the intersection of the titled interlacement set with $B_1$, so that
 \begin{equation}\label{0.7}
\bar{Q}[\widetilde{\mathcal{I}}\supset\mathcal{I}_{1}]\geq1-ce^{-c'N^{c''}}.
\end{equation}

\medskip
The proof of this key stochastic domination bound relies on two main ingredients. On the one hand, it involves a comparison of equilibrium measures, see Proposition \ref{lem:surfacedom}, which itself relies on a comparison of capacities on a slightly larger mesoscopic scale, see Proposition \ref{prop:Capcompare}. On the other hand, it involves a domination of $\mathcal{I}^{u_{**}+\epsilon/8}\cap B_{1}$ by the trace on $B_1$ of a suitable Poisson point process of excursions of the simple random walk starting on the boundary of $B_{1}$ up to their exit from a larger box $B_2$. For this last step we can rely on results of \cite{Beli13}.

\medskip
We will now explain how this article is organized. In Section 1 we introduce notation and make a brief review of results concerning continuous-time random walk, Green function, continuous-time random interlacements, as well as other useful facts and tools. Section 2 is devoted to the construction of the probability measure governing the tilted random interlacements. We also compute and obtain asymptotic estimates on the relative entropy, see Propositions \ref{prop:entropycalc} and \ref{prop:limsup}.  In Section 3 we derive
a comparison of capacities in Proposition \ref{prop:Capcompare}, and, subsequently, of equilibrium measures in Proposition \ref{prop:mesdom}. The latter proposition plays a crucial role in the
construction of the coupling in the next section. In Section 4 we prove (\ref{0.7}) in Proposition \ref{prop:coupling}, and the crucial statement (\ref{0.4}) in Theorem \ref{thm:tiltedone}. In the short Section 5 we assemble
the various pieces and prove the main theorem.

\medskip
Finally, we explain the convention we use concerning constants.
We denote by $c, c', \bar{c}, \widetilde{c}\ldots$ positive constants with
values changing from place to place, and by $c_{0}, c_{1},\ldots$
positive constants which are fixed and refer to the value as they
first appear. Throughout the article the constants depend on the dimension
$d$. Dependence on additional constants are stated explicitly in the notation.

\section{Some useful facts}

Throughout the article we assume $d\geq3$. In this section we
introduce further notation and useful facts,
in particular concerning continuous time random walk on $\mathbb{Z}^{d}$
and its potential theory. The Lemma \ref{lem:MEA2} concerns the occupation-times
of balls and will be used in Section 3. Moreover, we introduce another
continuous-time reversible Markov chain on $\mathbb{Z}^{d}$, which
will play a crucial role in the upcoming sections, and we state some useful
results regarding its potential theory. We also recall the definition and basic facts concerning continuous time random interlacements. We end this section by stating
some results about relative entropy and Poisson point processes.

\medskip
We start with some notation. We let $\mathbb{N}=\{0,1,\ldots\}$
stand for the set of natural numbers. We write $|\cdot|$ and $|\cdot|_{\infty}$
for the Euclidean and $l^{\infty}$-norms on $\mathbb{R}^{d}$. We
denote by $B(x,r)=\{y\in\mathbb{Z}^{d};\:|x-y|\leq r\}$ the closed Euclidean
ball of radius $r\geq0$ intersected with $\IZ^d$, and respectively
by $B_{\infty}(x,r)=\{y\in\mathbb{Z}^{d},\:|x-y|_{\infty}\leq r\}$
the closed $l^{\infty}$-ball of radius $r$ intersected with $\mathbb{Z}^{d}$.
When $U$ is a subset of $\mathbb{Z}^{d}$, we write $|U|$ for the
cardinality of $U$, and $U\subset\subset\mathbb{Z}^{d}$ means that $U$ is a finite subset of $\mathbb{Z}^{d}$. We denote by $\partial U$ (resp. $\partial_{i}U$)
the boundary (resp. internal boundary) of $U$, and by $\overline{U}$ its ``closure'':
\begin{equation}\label{eq:boundarydef}
\begin{split}
\partial U & =\{x\in U^{c};\:\exists y\in U,\ |x-y|=1\},
\\
\partial_{i}U & =\{x \in U; \; \exists y \in U^c, \; |x-y| = 1\}, \;\mbox{and}\; \bar{U}=U\cup\partial U \,.
\end{split}
\end{equation}

\medskip\n
When $U\subset\mathbb{\mathbb{R}}^{d}$ , and $\delta>0$ , we write
$U^{\delta}=\{z\in\mathbb{R}^{d};\: d(z,U)\leq\delta\}$ for the closed $\delta$-neighborhood of $U$, where $d(x,A)=\inf_{y\in A}|x-y|$ is the distance function. We define $d_{\infty}(x,A)$ in a similar fashion, with $|\cdot|_{\infty}$ in place of $|\cdot|$.
To distinguish balls in $\mathbb{R}^{d}$ from balls in $\mathbb{Z}^{d}$,
we write $B_{\mathbb{R}^{d}}(x,r)=\{z\in\mathbb{R}^{d}$; $|x-z|\leq r\}$
for the (closed) Euclidean ball of radius $r$ in $\mathbb{R}^{d}$.
We also introduce the $N$-discrete blow-up of $U$ as
\begin{equation}\label{eq:blowupdef-1}
U_{N}=\{x\in\mathbb{Z}^{d}; \; d_{\infty}(x,NU)\le1\},
\end{equation}
where $NU=\{Nz;\, z\in U\}$ denotes the homothetic of $U$.

\medskip
We will now collect some notation concerning connectivity properties. We call $\pi:\{1,\ldots n\}\to\mathbb{Z}^{d}$, with $n\geq1$, a nearest-neighbor path, when $|\pi(i)-\pi(i-1)|=1$,
for $1<i\leq n$. Given $K,L,U$ subsets of $\mathbb{Z}^{d}$, we say that
$K$ and $L$ are connected by $U$ and write $K\overset{U}{\leftrightarrow}L$, if there exists a finite nearest-neighbor path $\pi$ in $\mathbb{Z}^{d}$
such that $\pi(1)$ belongs to $K$ and $\pi(n)$ belongs to $L$, and for all $k$ in $\{1,\cdots,n\}$,
$\pi(k)$ belongs to $U$. Otherwise, we say that $K$ and $L$ are not connected
by $U$, and write $K\overset{U}{\nleftrightarrow}L$. Similarly, for $K,U\subset\mathbb{Z}^{d}$,
we say that $K$ is connected to infinity by $U$, if 
$K\overset{U}{\leftrightarrow}B(0,N)^{c}$ for all $N$, and write
$K\overset{U}{\leftrightarrow}\infty$. Otherwise, we say that $K$ is not connected to
infinity by $U$, and denote it by $K\overset{U}{\nleftrightarrow}\infty$.

\vspace{0.3cm}

We now turn to the definition of some path spaces, and of the continuous-time simple random
walk. We consider $\widehat{W}_{+}$ and $\widehat{W}$ the spaces
of infinite (resp. doubly-infinite) $(\mathbb{Z}^{d})\times(0,\infty)$-valued
sequences such that the first coordinate of the sequence forms an
infinite (resp. doubly-infinite) nearest-neighbor path in $\mathbb{Z}^{d}$,
spending finite time in any finite subset of $\mathbb{Z}^{d}$, and the
sequence of the second coordinate has an infinite sum (resp. infinite
``forward'' and ``backward'' sums). The second coordinate describes the duration at each step corresponding to the first coordinate. We denote by $\widehat{\mathcal{W}}_{+}$
and $\widehat{\mathcal{W}}$ the respective $\sigma$-algebras generated
by the coordinate maps. We denote by $P_{x}$ the law on $\widehat{W}_{+}$
under which $Z_{n}$, $n\geq0$, has the law of the simple random walk
on $\mathbb{Z}^{d}$, starting from $x$, and $\zeta_{n}$, $n\geq0$,
are i.i.d. exponential variables with parameter $1$, independent
from $Z_{n}$, $n\geq0$. We denote by $E_{x}$ the corresponding expectation.
Moreover, if $\alpha$ is a measure on $\mathbb{Z}^{d}$, we denote
by $P_{\alpha}$ and $E_{\alpha}$ the measure $\sum_{x\in\mathbb{Z}^{d}}\alpha(x)P_{x}$ (not necessarily a probability measure) and its corresponding ``expectation'' (i.e.~the integral with respect to the measure $P_\alpha$).

\medskip
We attach to $\widehat{w}\in\widehat{W}_{+}$ a continuous-time process $(X_t)_{t \ge 0}$, and call it the random walk on $\mathbb{Z}^{d}$ with constant jump
rate $1$ under $P_{x}$, through the following relations
\begin{equation}\label{1.3}
X_{t}(\widehat{w})=Z_{k}(\widehat{w}),\; \textrm{for $t \ge 0$, when }\sum_{i=0}^{k-1}\zeta_{i}\leq t<\sum_{i=0}^{k}\zeta_{i}
\end{equation}
(if $k=0$, the left sum term is understood as 0).
We also introduce the filtration
\begin{equation}\label{1.4}
\mathcal{F}_{t}=\sigma(X_{s},\: s\leq t), \; t \ge 0.
\end{equation}

\medskip\n
Given $U\subseteq \mathbb{Z}^{d}$, and $\widehat{w}\in\widehat{W}_{+}$,
we write $H_{U}(\widehat{w})=\inf\{t\geq0;\: X_{t}(\widehat{w})\in U\}$
and $T_{U}=\inf\{t\geq0;\: X_{t}(\widehat{w})\notin U\}$ for the
entrance time in $U$ and exit time from $U$. Moreover, we write
$\widetilde{H}_{U}=\inf\{s\geq\zeta_{1}; X_{s}\in U$\} for the hitting time of $U$.

\medskip
For $U\subset\mathbb{Z}^{d}$, we write $\Gamma(U)$ for the space
of all right-continuous, piecewise constant functions from $[0,\infty)$ to $U$,
with finitely many jumps on any compact interval. We will also denote by $(X_t)_{t\ge0}$ the canonical coordinate process on $\Gamma(U)$, and whenever an ambiguity arises, we will specify on which space we are working.

\medskip
We denote by $g(\cdot,\cdot)$ and $g_{U}(\cdot,\cdot)$ the Green
function of the walk, and the killed Green function of the walk upon leaving $U$,
\begin{equation}\label{1.5}
g(x,y)=E_{x}\Big[\int_{0}^{\infty}1_{\{X_{s}=y\}}ds\Big],\quad g_{U}(x,y)=E_{x}\Big[\int_{0}^{T_{U}}1_{\{X_{s}=y\}}ds\Big].
\end{equation}
It is known that $g$ is translation invariant. Moreover, both $g$ and $g_{U}$
are symmetric and finite, that is,
\begin{equation}\label{eq:Greensym}
g(x,y)=g(y,x),\quad g_{U}(x,y)=g_{U}(y,x)\textrm{ for all }x,y\in\mathbb{Z}^{d}.
\end{equation}

\medskip\n
When $x$ tends to infinity, one knows that (see, e.g. p.~153, Proposition
6.3.1 of \cite{LawlLimi10})
\begin{equation}\label{eq:greenasym}
g(0,x)=dG(x)+O(|x|^{1-d}),
\end{equation}
where for $y\in\mathbb{R}^{d}$
\begin{equation}\label{eq:contgreendef}
G(y)=c_{0}|y|^{2-d}
\end{equation}

\medskip\n
is the Green function with a pole at the origin, attached to Brownian motion, and
\begin{equation}\label{1.8}
c_{0}=\frac{\bar{c}_{0}}{d}=\frac{1}{2\pi^{d/2}}\;\Gamma \Big(\frac{d}{2}-1\Big).
\end{equation}

\medskip\n
We also have the following estimate on the killed Green function (see
p.~157, Proposition 6.3.5 of \cite{LawlLimi10}): for $x\in B(0,N)$,
\begin{equation}\label{eq:gUestimate}
\begin{split}
g_{B(0,N)}(0,x)  = &  \; g(0,x) - E_{x}\big[ g(0, X_{T_{B(0,N)}})\big] \\[1ex]
 = &\; \bar{c}_{0}(|x|^{2-d}-N^{2-d})+O(|x|^{1-d}).
\end{split}
\end{equation}

\medskip
We further recall the definitions of equilibrium measure and capacity, and refer to Section 2, Chapter 2 of \cite{Lawl91} for more details. Given $M\subset\subset\mathbb{Z}^{d}$,
and we write $e_{M}$ for the equilibrium measure of $M$:
\begin{equation}\label{eq:emdef}
e_{M}(x)=P_{x}[\widetilde{H}_{M}=\infty]1_{M}(x),\: x\in\mathbb{Z}^{d},
\end{equation}
and $\mathrm{cap}(M)$ for the capacity of $M$, which is the total
mass of $e_{M}$:
\begin{equation}\label{1.12}
\mathrm{cap}(M)=\sum_{x\in K}e_{M}(x).
\end{equation}

\medskip\n
There is also an equivalent definition of capacity through the Dirichlet
form:
\begin{equation}\label{1.13}
\mathrm{cap}(M)=\inf_{f}\mathcal{E}_{\mathbb{Z}^{d}}(f,f)
\end{equation}
where $f:\mathbb{Z}^{d}\to\mathbb{R}$ is finitely supported, $f\geq1$
on $M$, and
\begin{equation}\label{eq:Dirichletdef}
\mathcal{E}_{\mathbb{Z}^{d}}(f,f)=\frac{1}{2}\sum_{|x-y|=1}\frac{1}{2d}\,\big(f(y)-f(x)\big)^{2}
\end{equation}
is the discrete Dirichlet form for simple random walk.

\medskip\n
Moreover, the probability of entering $M$ can be expressed as
\begin{equation}\label{1.15}
P_{x}[H_{M}<\infty]=\sum_{y\in M}g(x,y)e_{M}(y),
\end{equation}
and in particular, when $x\in M$, we have
\begin{equation}\label{eq:eKg1}
\sum_{y\in M}g(x,y)e_{M}(y)=1.
\end{equation}

\medskip\n
We now introduce some notation for (killed) entrance measures.
Given $A \subseteq B$ subsets of $\mathbb{Z}^d$, with $A$ finite,
we define for $x \in \IZ^d$, $y \in A$,
\begin{equation} \label{eq:hdefsf}
h_{A,B}(x,y)=P_{x}(H_{A}<T_{B},\: X_{H_{A}}=y) \,.
\end{equation}
When $B=\mathbb{Z}^{d}$, we simply write $h_{A}(x,z)$.

\medskip\n
The equilibrium measure also satisfies the sweeping identity (for
instance, seen as a consequence of (1.46) in \cite{Szni10a}), namely,
for $M\subset M'\subset\subset\mathbb{Z}^{d}$, $y \in M$, using the notation from above (\ref{1.3}),
\begin{equation}\label{eq:sweeping}
P_{e_{M'}}[H_{M}<\infty,X_{H_{M}}=y]=\sum_{x\in\partial_{i}M'}e_{M'}(x)h_{M}(x,y)=e_{M}(y).
\end{equation}

\medskip
The next lemma will be useful in Section 3, see Proposition \ref{prop:Capcompare}.
It provides an asymptotic estimate on the expected time a random walk starting
at the boundary of a ball of large radius spends in this ball. We recall
the convention on constants stated at the end of the Introduction.

\begin{lem}\label{lem:MEA2}
\begin{equation}\label{eq:alpha}
\alpha(N)\overset{\mathrm{def}}{=}\sup_{x\in\partial_{i}B(0,N)}\Big|\frac{E_{x}\big[\int_{0}^{\infty}1_{B(0,N)}(X_{s})ds\big]}{c_{1}N^{2}}-1\Big|\textrm{ tends to 0 as }N\to\infty
\end{equation}
\end{lem}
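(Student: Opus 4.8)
The plan is to express the quantity $E_x\big[\int_0^\infty 1_{B(0,N)}(X_s)\,ds\big]$ in terms of the killed Green function and then use the sharp asymptotics~\eqref{eq:gUestimate}. Indeed, by the definition of $g_{B(0,N)}$ in~\eqref{1.5} and symmetry~\eqref{eq:Greensym}, for $x\in\partial_i B(0,N)$ one has
\begin{equation*}
E_x\Big[\int_0^\infty 1_{B(0,N)}(X_s)\,ds\Big]=\sum_{y\in B(0,N)}g_{B(0,N)}(x,y)=\sum_{y\in B(0,N)}g_{B(0,N)}(y,x).
\end{equation*}
The second form is convenient because we know the $y$-dependence of $g_{B(0,N)}(y,x)$ well when $y$ ranges over the ball and $x$ sits near the boundary. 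First I would substitute the estimate~\eqref{eq:gUestimate}, which gives $g_{B(0,N)}(0,y)=\bar c_0(|y|^{2-d}-N^{2-d})+O(|y|^{1-d})$; more generally, centering at $x$ and using translation invariance of $g$ together with the fact that $x$ is within distance $1$ of $\partial_i B(0,N)$, I would argue that $g_{B(0,N)}(x,y)$ is comparable, up to boundary corrections that are uniform in $x\in\partial_i B(0,N)$, to the corresponding Green function of a ball of radius $\sim N$ centered at $x$.

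Next I would pass from the sum over lattice points to a Riemann integral. Writing $y=Nz$ with $z$ ranging over $B_{\mathbb{R}^d}(0,1)$ (or the ball of radius $1$ centered at a unit vector, after recentering), the leading term becomes
\begin{equation*}
\sum_{y\in B(0,N)}\bar c_0\big(|y|^{2-d}-N^{2-d}\big)\sim N^d\,N^{2-d}\,\bar c_0\int_{B_{\mathbb{R}^d}(0,1)}\big(|z|^{2-d}-1\big)\,dz = c_1 N^2,
\end{equation*}
which identifies $c_1$ as $\bar c_0$ times this (finite, dimension-dependent) integral — the integrand is integrable near the origin since $2-d>-d$. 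The error terms contribute $O(N^d\cdot N^{1-d})=O(N)=o(N^2)$, again uniformly in $x$, so dividing by $c_1N^2$ gives $\alpha(N)\to 0$.

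The main technical obstacle is handling the boundary corrections uniformly over $x\in\partial_i B(0,N)$: the estimate~\eqref{eq:gUestimate} is stated for the Green function killed on leaving $B(0,N)$ with pole at the center $0$, whereas here the pole $x$ is at the boundary and the relevant domain from $x$'s viewpoint is an off-center ball. The cleanest route is to sandwich $B(0,N)$ between two balls centered at $x$, say $B(x,N/2)\subset B(0,N)\subset B(x,2N)$ (all centered at a boundary point, hence of comparable but not equal radii), apply~\eqref{eq:gUestimate} to those concentric balls to bound $g_{B(0,N)}(x,\cdot)$ from above and below, and check that the two bounds have the same $c_1N^2$ leading term after the Riemann-sum passage — or, more precisely, to show directly via the last-exit decomposition $g_{B(0,N)}(x,y)=g(x,y)-E_x[g(X_{T_{B(0,N)}},y)]$ and the harmonicity of $g(\cdot,y)$ that the relevant integral converges to the continuous analogue $\int (G_{B_{\mathbb{R}^d}}(\,\cdot\,,z))\,dz$ uniformly. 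Since all the walk-theoretic inputs are standard and the geometry is that of a single large ball, once this uniformity is in hand the convergence $\alpha(N)\to 0$ follows.
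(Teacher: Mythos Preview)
Your first displayed identity is wrong, and the error is not cosmetic. The quantity in the lemma is the \emph{total} occupation time of $B(0,N)$, so
\[
E_x\Big[\int_0^\infty 1_{B(0,N)}(X_s)\,ds\Big]=\sum_{y\in B(0,N)} g(x,y),
\]
with the \emph{full} Green function, not the killed one. The sum $\sum_{y\in B(0,N)} g_{B(0,N)}(x,y)$ equals $E_x[T_{B(0,N)}]$, the expected \emph{first} exit time. For $x\in\partial_i B(0,N)$ these two quantities are of different orders: $E_x[T_{B(0,N)}]\asymp (N^2-|x|^2)\asymp N$, while the total occupation time is $\asymp N^2$ (the walk returns to $B(0,N)$ of order $N$ times before escaping to infinity). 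So your computation would produce $o(N^2)$ and could never recover the constant $c_1$. Relatedly, your proposed identification $c_1=\bar c_0\int_{B_{\IR^d}(0,1)}(|z|^{2-d}-1)\,dz$ is the (rescaled) mean exit time from the center, which is not the quantity at stake here; the pole sits on the boundary, not at the center.

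The paper works directly with $\sum_{y\in B(0,N)} g(x,y)$. Because the full Green function is translation invariant, there is no ``pole at the boundary'' difficulty: one fixes $x\in\partial_i B(0,N)$, excises a mesoscopic ball $B(x,r_N)$ with $r_N=N^{1/2}$ around the singularity (its contribution is $O(r_N^2)=O(N)$ by the crude bound $g(x,y)\le c|x-y|^{2-d}$), and on the complement uses \eqref{eq:greenasym} together with a Riemann-sum argument to compare with $d\int_{B_{\IR^d}(0,N)} G(\bar y-\bar x)\,d\bar y$, where $\bar x$ is the projection of $x$ to the sphere of radius $N$. Scaling and rotation invariance of $G$ then give the correct constant
\[
c_1=d\int_{B_{\IR^d}(0,1)}G(\bar y-\bar z)\,d\bar y,\qquad |\bar z|=1,
\]
i.e.\ $d$ times the expected time Brownian motion spends in the unit ball when started on its boundary. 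Once you replace $g_{B(0,N)}$ by $g$ and drop the attempt to invoke \eqref{eq:gUestimate} (which is tailored to a pole at the center and is not needed here), the rest of your outline---Riemann sums plus a separate treatment of the singularity---is exactly what the paper does.
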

\begin{proof}
For simplicity, we fix $x$ in this proof and write $B(0,N)=B$. We set
\begin{equation}\label{eq:endef}
\epsilon_{N}=N^{-1/2},\: r_{N}=\epsilon_{N}N.
\end{equation}
 We split $B$ into two parts: $B_{I}= B \cap \widetilde{B}$ and $B_{J}=B\backslash \widetilde{B}$, where $\widetilde{B}=B(x,r_{N})$.

\medskip
In $B_{I}$, we use a crude upper bound for $g(x,\cdot)$, derived
from (\ref{eq:greenasym}),
\begin{equation}
g(x,y)\leq\frac{c}{(\max\{|x-y|_{\infty},1\})^{d-2}}.
\end{equation}
As a result, we find that
\begin{equation}\label{eq:Id}
\sum_{y\in B_{I}}g(x,y)\leq\sum_{l=0}^{\lceil r_{N}\rceil}\sum_{y:|y-x|_{\infty}=l}\frac{c}{(\max\{l,1\})^{d-2}}\leq c'r_{N}^{2}.
\end{equation}

\medskip\n
Let $\bar{x}=\frac{N}{|x|}x$ denote the projection of $x$ onto the
Euclidean sphere of radius $N$ centered at 0. It is straightforward
to see that
\begin{equation}\label{eq:Ic}
\int_{B_{\mathbb{R}^{d}}(\bar{x},r_{N})}G(y-\bar{x})dy\leq cr_{N}^{2}.
\end{equation}

\medskip\n
By the asymptotic approximation of discrete Green function (see (\ref{eq:greenasym})
and (\ref{eq:contgreendef})), writing $\widehat{B}=B_{\mathbb{R}^{d}}(0,N)\backslash B_{\mathbb{R}^{d}}(\bar{x},r_N)$, we obtain with a Riemann sum approximation argument that
\begin{equation}\label{eq:IIdc}
\begin{split}
\Big|\sum_{y\in B_J}g(x,y)-d\int_{\widehat{B}}G(\bar{y}-\bar{x})d\bar{y}  \Big|\leq & \;\Big|\sum_{y\in B_{J}}g(x,y)-d\int_{\widehat{B}}G(\bar{y}-x)d\bar{y}\Big|
\\[1ex]
  + &\; d\, \Big|\int_{\widehat{B}}(G(\bar{y}-x)-G(\bar{y}-\bar{x}))d\bar{y}\Big|
 \\[1ex]
  \leq &\;\l cN.
\end{split}
\end{equation}

\n
Thanks to the scaling property and rotation invariance of Brownian
motion, writing
\begin{equation}\label{1.25}
c_{1}=d\int_{B_{\mathbb{R}^{d}}(0,1)}G(\bar{y}-\bar{z})d\bar{y}\textrm{, where }\bar{z}\in\mathbb{R}^{d}\textrm{\textrm{ with }}|z|=1\textrm{ is arbitrary}
\end{equation}
 ($c_{1}/d$ is the expected time spent by Brownian motion in a ball
of radius 1 when starting from its boundary), and putting (\ref{eq:Id}),
(\ref{eq:Ic}) and (\ref{eq:IIdc}) together, we see that
\begin{equation}\label{1.26}
\Big|E_{x}\Big[\int_{0}^{\infty}1_{B(0,N)}(X_{s})ds\Big] - c_1 N^{2}\Big|\leq c r_{N}^{2}+c'N.
\end{equation}

\n
By the definition of $r_{N}$ in (\ref{eq:endef}), we obtain
(\ref{eq:alpha}) as desired.
\end{proof}

We now introduce a positive martingale, which plays an important  role in the definition of the tilted interlacements in the next section. We will show in the lemma
below that this martingale is uniformly integrable, and we will use its limiting
value as a probability density.

\medskip
Given a real-valued function $h$ on $\mathbb{Z}^{d}$, we denote its discrete Laplacian
by
\begin{equation}\label{1.27}
\Delta_{dis}h(x)=\frac{1}{2d}\sum_{|e|=1}h(x+e)-h(x).
\end{equation}

\n
We consider a positive function $f$ on $\mathbb{Z}^{d}$, which
is equal to 1 outside a finite set, and we write
\begin{equation}\label{1.28}
V=-\frac{\Delta_{dis}f}{f}.
\end{equation}
We also introduce the stochastic process
\begin{equation}\label{1.29}
M_{t}=\frac{f(X_{t})}{f(X_{0})}\exp\Big\{\int_{0}^{t}V(X_{s})ds\Big\}, t \ge 0,
\end{equation}
and define for all $x\in\mathbb{Z}^{d},\: T>0$ the positive measure
$\widetilde{P}_{x,T}$ (on $\widehat{W}_{+}$ with density $M_T$ with respect to $P_x$):
\begin{equation}\label{eq:tiltedP}
\widetilde{P}_{x,T}=M_{T}P_{x}.
\end{equation}

\n
The next lemma plays an important role in the construction of the tilted interlacements.
\begin{lem}\label{lem:martingale}
For all $x\in\mathbb{Z}^{d}$,
\begin{equation}\label{eq:mtmart}
(M_{t})_{t\geq0}\textrm{ is an }(\mathcal{F}_{t})\textrm{-martingale under $P_x$},
\end{equation}
and
\begin{equation}\label{eq:mtunifint}
(M_{t})_{t\geq0}\textrm{ is uniformly integrable under }P_{x}.
\end{equation}
Moreover,
\begin{equation}\label{eq:Minfty}
1=E_{x}[M_{\infty}]=\frac{1}{f(x)}E_{x}[e^{\int_{0}^{\infty}V(X_{s})ds}].
\end{equation}
\end{lem}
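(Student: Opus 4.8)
The plan is to establish the three assertions in sequence, as each builds on the previous one. First I would verify the martingale property \eqref{eq:mtmart}. Since $f$ equals $1$ outside a finite set, $V = -\Delta_{dis}f/f$ is bounded and finitely supported, so all exponential moments below are finite. The natural approach is to apply the generator of the continuous-time walk $(X_t)$: writing $L g(x) = \Delta_{dis}g(x)$ for the generator (jump rate $1$), one checks by a direct Dynkin-type computation that $N_t := f(X_t)\exp\{\int_0^t V(X_s)\,ds\}$ satisfies $dN_t = (\text{martingale increment}) + \exp\{\int_0^t V\}\,(\Delta_{dis}f(X_t) + V(X_t)f(X_t))\,dt$, and the drift term vanishes identically by the definition \eqref{1.28} of $V$. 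Dividing by the constant $f(X_0)$ gives that $M_t$ is a local martingale; boundedness of $f$ away from $0$ and $\infty$ together with the bound $|V|\le C\mathbf{1}_{\text{finite set}}$ shows $M_t \le C' \exp\{C' t\}$ on $[0,T]$, so it is a genuine martingale. Alternatively one can phrase this purely combinatorially via the jump chain $(Z_n)$ and the i.i.d.\ clocks $(\zeta_i)$: between jumps $M_t$ evolves deterministically as $f(X_0)^{-1}f(Z_k)e^{\int_0^t V}$, and at a jump from $z$ to a neighbor $z'$ the conditional expectation of the ratio $f(z')/f(z)$ averaged over the $2d$ neighbors reconstructs exactly $1 + V(z)\,dt$-type cancellation; I would present whichever is cleaner, likely the generator computation.

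Next, for uniform integrability \eqref{eq:mtunifint}, the key observation is that $M_t$ is eventually frozen. Let $F \subset\subset \mathbb{Z}^d$ be a finite set containing $\{f \ne 1\}$ and its neighborhood, so that $V$ is supported in $F$. Since the continuous-time walk on $\mathbb{Z}^d$, $d \ge 3$, is transient, it spends only finitely much time in $F$; more precisely $\int_0^\infty \mathbf{1}_F(X_s)\,ds < \infty$ $P_x$-a.s., and moreover after the last visit to $F$ the value of $M_t$ no longer changes, because both $f(X_t) = 1$ (as $X_t \notin \{f\ne1\}$) and the integrand $V(X_s)$ vanishes for $s$ beyond that last visit. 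Hence $M_t \to M_\infty$ a.s.\ with $M_\infty = f(X_0)^{-1}\exp\{\int_0^\infty V(X_s)\,ds\}$, which is finite a.s. For uniform integrability it then suffices to bound $M_t$ by an integrable random variable uniformly in $t$: I would show $\sup_{t} M_t \le f(X_0)^{-1}\exp\{\|V^+\|_\infty\, \ell_F\}$ where $\ell_F = \int_0^\infty \mathbf{1}_F(X_s)\,ds$ is the total occupation time of $F$, and then invoke Lemma \ref{lem:MEA2}-type control — or more simply the fact that $\ell_F$ has exponential moments of all orders under $P_x$ (standard for a transient walk: $\ell_F$ is stochastically dominated by a geometric number of i.i.d.\ exponential sojourns, hence has a finite moment generating function in a neighborhood of $0$), which gives $E_x[\sup_t M_t] < \infty$ once we note $\|V^+\|_\infty$ is a fixed finite constant. (One has to be slightly careful that $\|V^+\|_\infty$ times the relevant constant lands inside the domain of finiteness of the m.g.f.; since $\ell_F$ has \emph{all} exponential moments this is automatic.) Domination by an integrable variable plus a.s.\ convergence yields \eqref{eq:mtunifint} and, via dominated convergence, $E_x[M_t] \to E_x[M_\infty]$.

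Finally \eqref{eq:Minfty} follows by assembling the two previous facts: $M$ is a martingale with $M_0 = 1$, so $E_x[M_t] = 1$ for every $t$; uniform integrability then gives $E_x[M_\infty] = \lim_t E_x[M_t] = 1$; and the explicit form $M_\infty = f(X_0)^{-1}\exp\{\int_0^\infty V(X_s)\,ds\} = f(x)^{-1}\exp\{\int_0^\infty V(X_s)\,ds\}$ under $P_x$ (where $X_0 = x$) gives the last equality in \eqref{eq:Minfty}, i.e.\ $E_x[e^{\int_0^\infty V(X_s)\,ds}] = f(x)$. I expect the main obstacle to be the uniform integrability step: proving the martingale property is a routine generator computation, and \eqref{eq:Minfty} is immediate once the first two parts are in hand, but for \eqref{eq:mtunifint} one must argue carefully that $M_t$ genuinely stabilizes after the last visit to the support of $V$ (not merely that it converges) and produce an honest integrable dominating function — the cleanest route being the exponential integrability of the occupation time of a fixed finite set for a transient walk, which I would either quote or prove in a line via domination by a geometric sum of exponentials.
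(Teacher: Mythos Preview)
Your treatment of \eqref{eq:mtmart} and of \eqref{eq:Minfty} (given the first two parts) is fine and matches the paper's: the martingale property is a routine generator computation, and \eqref{eq:Minfty} is immediate from $L^1$-convergence.

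The gap is in your uniform integrability argument. You assert that the total occupation time $\ell_F=\int_0^\infty 1_F(X_s)\,ds$ of a finite set has exponential moments \emph{of all orders}, and you need this because you have no control on the size of $\|V^+\|_\infty$. This is false. Already for a single site, $\ell_{\{0\}}$ under $P_0$ is a geometric sum of i.i.d.\ exponential holding times, and one computes $E_0[e^{\lambda \ell_{\{0\}}}]=p/(p-\lambda)$ for $\lambda<p$ and $=\infty$ for $\lambda\ge p$, where $p=P_0[\widetilde H_0=\infty]=1/g(0,0)$ is the escape probability. So the moment generating function is finite only on $(-\infty,p)$, exactly as your parenthetical ``in a neighborhood of $0$'' says --- which directly contradicts ``all orders''. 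Since $\|V^+\|_\infty$ can be made arbitrarily large by choosing $f$ with a sharp bump (e.g.\ $f(0)$ large, $f\equiv 1$ elsewhere gives $V(0)=1-1/f(0)$ close to $1$, and the situation only worsens for $\ell_F$ with $F$ larger), the domination $\sup_t M_t\le C\,e^{\|V^+\|_\infty \ell_F}$ need not be by an integrable variable, and the argument breaks down.

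The paper circumvents this by proving the de la Vall\'ee--Poussin criterion $\sup_t E_x[M_t\log M_t]<\infty$ instead of an $L^1$ bound on the running supremum. The point is that $E_x[M_t\log M_t]=E^{\widetilde Q_x}[\log M_t]$, where $\widetilde Q_x$ is the law of the \emph{tilted} chain with generator $\widetilde L h(y)=\frac{1}{2d}\sum_{|e|=1}\frac{f(y+e)}{f(y)}(h(y+e)-h(y))$; writing $g=\log f$ one gets
\[
E^{\widetilde Q_x}[\log M_t]=E^{\widetilde Q_x}\!\Big[\int_0^t \psi(X_s)\,ds\Big],\qquad \psi=\widetilde L g+V,
\]
after subtracting a $\widetilde Q_x$-martingale of mean zero. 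A direct computation shows $\psi(y)=\frac{1}{2d}\sum_{|e|=1}\big(\frac{f(y+e)}{f(y)}\log\frac{f(y+e)}{f(y)}-\frac{f(y+e)-f(y)}{f(y)}\big)\ge 0$ (by $(1+u)\log(1+u)\ge u$) and finitely supported, and transience of the tilted walk then gives $\sup_t E^{\widetilde Q_x}[\int_0^t\psi]\le E^{\widetilde Q_x}[\int_0^\infty\psi]<\infty$. This uses only \emph{first} moments of occupation times, not exponential moments, which is why no smallness condition on $V$ is needed.
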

\begin{proof}
The first claim (\ref{eq:mtmart}) is classical. It follows for instance from Lemma 3.2, p.~174 in Chapter 4 of \cite{EthiKurt86}. Note that $E_{x}[M_{0}]=1$, so $\widetilde{P}_{x,T}$ is a probability measure for each $T$. Using the Markov property of $X$ under $P_x$ and (\ref{eq:mtmart}), it readily follows that $(X_t)_{0\leq t\leq T}$ under $\widetilde{P}_{x,T}$ is a Markov chain. By Theorem 2.5, p.~61 of \cite{DemuCaste00}, its semi-group (acting on the Banach space of functions on $\mathbb{Z}^{d}$ tending to zero at infinity) has a generator given by the bounded operator:
\begin{equation}\label{generatordef}
\begin{split}
\widetilde{L}h &=\frac{1}{f}\Delta_{dis}(fh)-\frac{\Delta_{dis}f}{f}h, \; \mbox{so that}
\\[1ex]
\widetilde{L} h(x) & = \dis\frac{1}{2d} \; \dis\sum\limits_{|e| = 1} \; \dis\frac{f(x+e)}{f(x)} \;\big(h(x + e) - h(x)\big)\,.
\end{split}
\end{equation}

\n
We introduce the law $\widetilde{Q}_{x}$ on $\Gamma(\mathbb{Z}^{d})$ of the jump process starting from $x$, corresponding to the generator $\widetilde{L}$ defined as in (\ref{generatordef}). Outside some finite set $f=1$, and by (\ref{generatordef}), outside the (discrete) closure of this finite set, this process jumps as a simple random walk. As a result, the canonical jump process attached to $\widetilde{Q}_x$ is transient. In addition, up to time $T$, it has the same law as $(X_t)_{0\leq t\leq T}$ under $\widetilde{P}_{x,T}$.

\medskip
Therefore, the claim (\ref{eq:mtunifint}) will follow once we show that
\begin{equation}\label{eq:delavalle}
\sup_{t\geq0}E_{x}[M_{t}\log M_{t}]=\sup_{T\geq t\geq0}\widetilde{E}_{x,T}[\log M_{t}]=\sup_{t\geq0} E^{\widetilde{Q}_x}[\log M_{t}]<\infty.
\end{equation}

\n
Now, setting $g=\log f$, we split $E^{\widetilde{Q}_{x}}[\log M_{t}]$
into two parts
\begin{align}
E^{\widetilde{Q}_{x}}[\log M_{t}]   = &\; E^{\widetilde{Q}_{x}}\Big[g(X_{t})-g(X_{0}) + \dis\int_{0}^{t}V(X_{s})ds\Big] \nonumber
\\
  = &\; E^{\widetilde{Q}_{x}}\Big[g(X_{t})-g(X_{0})-\dis\int_{0}^{t}\widetilde{L}g(X_{s})ds\Big] \label{eq:martsplit}
 \\
  + & \;E^{\widetilde{Q}_{x}}\Big[\dis\int_{0}^{t}(\widetilde{L}g+V)(X_{s})ds\Big]. \nonumber
\end{align}

\n
The first term after the second equality of (\ref{eq:martsplit}) is zero since $g(X_{t})-g(X_{0})-\int_{0}^{t}\widetilde{L}g(X_{s})ds$ is a martingale under $\widetilde{Q}_{x}$ (see Proposition 1.7, p.~162 of \cite{EthiKurt86}). As for the second term,
we write
\begin{equation}\label{1.38}
\psi=\widetilde{L}g+V.
\end{equation}
By (\ref{generatordef}) we see that
\begin{equation}\label{1.39}
\widetilde{L}g(x)=\frac{1}{2d}\sum_{|e|=1}\frac{f(x+e)}{f(x)}(g(x+e)-g(x)).
\end{equation}

\n
Hence, with a straightforward calculation and the fact that
\begin{equation}\label{1.40}
(1+u)\log(1+u)-u\geq0,\:\textrm{for }u>-1,
\end{equation}
we see that
\begin{equation}\label{eq:phipositive}
\psi(x)=\frac{1}{2d}\sum_{|e|=1}\Big(\frac{f(x+e)}{f(x)}\log\frac{f(x+e)}{f(x)}-\frac{f(x+e)-f(x)}{f(x)}\Big)\geq0,
\end{equation}
and that $\psi(x)$ is finitely supported.

\medskip
Therefore, due to the transience of the canonical process under $\widetilde{Q}_x$,
\begin{equation}\label{1.42}
\sup_{t\geq0}E^{\widetilde{Q}_{x}}\Big[\int_{0}^{t}\psi(X_{s})ds\Big]\overset{(\ref{eq:phipositive})}{\leq}E^{\widetilde{Q}_{x}}\Big[\int_{0}^{\infty}\psi(X_{s})ds\Big]<\infty,
\end{equation}
whence (\ref{eq:delavalle}).

\medskip
The last claim (\ref{eq:Minfty}) follows by uniform integrability. Indeed, the martingale converges
$P_{x}$-a.s. and in $L^{1}(P_{x})$ towards
\begin{equation}\label{1.43}
M_{\infty}=\frac{1}{f(X_{0})}\exp\Big\{\int_{0}^{\infty}V(X_{s})ds\Big\},
\end{equation}
so we have,
\begin{equation}\label{1.44}
E_{x}[M_{\infty}]= E_{x}[M_{0}]= 1.
\end{equation}
\end{proof}
We thus define for all $x$ in $\mathbb{Z}^{d}$ the positive measure on $\widehat{W}_{+}$:
\begin{equation}\label{1.45}
\widetilde{P}_{x}\overset{\mathrm{def}}{=}M_{\infty}P_{x}=\frac{1}{f(x)}\exp\Big\{\int_{0}^{\infty}V(X_{s})ds\Big\}P_{x}.
\end{equation}
The following corollary is a consequence of Lemma \ref{lem:martingale} and its proof.
\begin{cor}\label{cor1.3}
For all $x$ in $\mathbb{Z}^{d}$,
\begin{equation}\label{eq:tiltedPdef}
\widetilde{P}_{x}\textrm{ is a probability measure.}
\end{equation}
Moreover, $(X_{t})_{t\geq0}$ under $\widetilde{P}_{x}$, $x\in\mathbb{Z}^{d}$,
is a reversible Markov chain on $\mathbb{Z}^{d}$ with reversible
measure
\begin{equation}\label{eq:lambdadef}
\widetilde{\lambda}(x)=f^{2}(x),\: x\in\mathbb{Z}^{d},
\end{equation}
and its semi-group in $L^{2}(\widetilde{\lambda})$ has the bounded generator
\begin{equation}\label{eq:generatordef}
\widetilde{L}h(x)=(\frac{1}{f}\,\Delta_{dis}(fh)+ Vh)(x)=\frac{1}{2d}\sum_{|e|=1}\frac{f(x+e)}{f(x)}(h(x+e)-h(x)),
\end{equation}
for all $h$ in $L^2(\widetilde{\lambda})$ and $x$ in $\mathbb{Z}^{d}$. (Note that $X$ has variable jump rate under $\widetilde{P}_x$, unless $f$ is constant.)
\end{cor}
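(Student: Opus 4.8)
The plan is to assemble the facts already obtained in Lemma~\ref{lem:martingale} and its proof, and then to read reversibility directly off the explicit jump rates of $\widetilde{L}$. The statement (\ref{eq:tiltedPdef}) will be immediate: by (\ref{eq:mtmart}) and (\ref{eq:mtunifint}), $(M_t)_{t\ge 0}$ is a uniformly integrable nonnegative $(\mathcal{F}_t)$-martingale under $P_x$ with $E_x[M_0]=1$, so its $L^1(P_x)$-limit $M_\infty$ satisfies $E_x[M_\infty]=1$ by (\ref{eq:Minfty}), and hence $\widetilde{P}_x=M_\infty P_x$ defined in (\ref{1.45}) is a probability measure on $\widehat{W}_+$.

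Next I would identify the law of the canonical process $(X_t)_{t\ge0}$ under $\widetilde{P}_x$. Fixing $T>0$, the martingale property together with uniform integrability gives $E_x[M_\infty F]=E_x[\,E_x[M_\infty\mid\mathcal{F}_T]\,F\,]=E_x[M_T F]$ for every bounded $\mathcal{F}_T$-measurable $F$, so $\widetilde{P}_x$ and $\widetilde{P}_{x,T}$ agree on $\mathcal{F}_T$. From the proof of Lemma~\ref{lem:martingale} we know that $(X_t)_{0\le t\le T}$ under $\widetilde{P}_{x,T}$ has the same law as the canonical jump process under $\widetilde{Q}_x$, the transient Markov chain with bounded generator $\widetilde{L}$ of (\ref{generatordef}), which is well defined on all of $[0,\infty)$. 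Letting $T$ range over the positive reals, it follows that the image of $\widetilde{P}_x$ under $\widehat{w}\mapsto X_\cdot(\widehat{w})$ coincides with $\widetilde{Q}_x$ on $\sigma(X_s,\,s\ge0)$; thus $(X_t)_{t\ge0}$ under $\widetilde{P}_x$ is the Markov chain $\widetilde{Q}_x$ with generator $\widetilde{L}$. The two expressions for $\widetilde{L}$ in (\ref{eq:generatordef}) are just (\ref{generatordef}) rewritten with the help of (\ref{1.28}), and the remark on the variable jump rate follows since the total jump rate out of $x$ equals $\frac{1}{2d}\sum_{|e|=1}f(x+e)/f(x)$, which equals $1$ only when $f$ is constant.

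Finally, for reversibility I would simply read off the jump rates $q(x,x+e)=\frac{1}{2d}f(x+e)/f(x)$ for $|e|=1$ and check detailed balance with respect to $\widetilde{\lambda}(x)=f^2(x)$, namely $\widetilde{\lambda}(x)\,q(x,x+e)=\frac{1}{2d}f(x)f(x+e)=\widetilde{\lambda}(x+e)\,q(x+e,x)$; equivalently, a short symmetrization over unoriented edges shows that $\widetilde{L}$ is symmetric on $L^2(\widetilde{\lambda})$, with associated Dirichlet form $\widetilde{\mathcal{E}}(h,k)=\frac{1}{4d}\sum_{|x-y|=1}f(x)f(y)\big(h(y)-h(x)\big)\big(k(y)-k(x)\big)$. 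Either way this yields that $\widetilde{\lambda}$ is a reversible measure for $(X_t)_{t\ge0}$ under $\widetilde{P}_x$.

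I do not expect a genuine obstacle here. The only point that requires some care is the passage from the Markov property up to each finite horizon $T$ --- which is all that the change of measure $\widetilde{P}_{x,T}=M_T P_x$ provides directly --- to the Markov property and reversibility on the whole time axis $[0,\infty)$; this is exactly where the uniform integrability of $(M_t)_{t\ge0}$ furnished by Lemma~\ref{lem:martingale} enters.
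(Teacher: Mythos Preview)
Your proposal is correct and follows exactly the route the paper intends: the corollary is stated as ``a consequence of Lemma~\ref{lem:martingale} and its proof,'' and you have simply made explicit the three steps (probability measure via $E_x[M_\infty]=1$, identification of the process with $\widetilde{Q}_x$ via $\widetilde{P}_x|_{\mathcal{F}_T}=\widetilde{P}_{x,T}|_{\mathcal{F}_T}$, and reversibility by detailed balance for the conductances $\frac{1}{2d}f(x)f(y)$) that the paper leaves implicit. The only small addition one might make is a word on why $\widetilde{L}$ is bounded on $L^2(\widetilde{\lambda})$, but this is immediate since $f$ is bounded above and away from zero.
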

Similar to the results in potential theory for the continuous-time
simple random walk earlier in this section, we can also define for
$(X_{t})_{t\geq0}$ under $\{\widetilde{P}_{x}\}_{x\in\mathbb{Z}^{d}}$
the corresponding notions such as (killed) Green function, equilibrium
measure, and capacity. We also refer to Section 2.1 and 2.2 of Chapter
2 and Section 4.2 of Chapter 4 of \cite{FukuOshiTake11} for more details.
We denote the corresponding objects with a tilde, and refer to them
as tilted objects.

\medskip
Specifically, we write $\widetilde{g}$ and $\widetilde{g}_{U}$ for the tilted Green function
and killed Green function (outside $U \subseteq \IZ^d$):
\begin{equation}\label{eq:tiltedgdef}
\widetilde{g}(x,y)=\frac{1}{\widetilde{\lambda}(y)}\widetilde{E}_{x}\Big[\int_{0}^{\infty}1_{\{X_{s}=y\}}ds\Big],\quad\widetilde{g}_{U}(x,y)=\frac{1}{\widetilde{\lambda}(y)}\widetilde{E}_{x}\Big[\int_{0}^{T_{U}}1_{\{X_{s}=y\}}ds\Big].
\end{equation}
One knows that $\widetilde{g}$ and $\widetilde{g}_{U}$ are symmetric and
finite. Given $M\subset\subset\mathbb{Z}^{d}$, the tilted equilibrium
measure and tilted capacity of $M$ are defined as:
\begin{equation}\label{eq:tiltedemdef}
\widetilde{e}_{M}(x)=\widetilde{P}_{x}[\widetilde{H}_{M}=\infty]1_{M}(x)f(x) \Big(\dis\frac{1}{2d} \; \dis\sum\limits_{|e| = 1} f(x + e)\Big), \;\; \mbox{for} \; x\in\mathbb{Z}^{d}
\end{equation}
(the expression after the indicator function of $M$ is a reversibility measure of the discrete skeleton of the continuous-time chain, which can be viewed as a random walk among the conductances $\frac{1}{2d} f(x) f(y)$, for $x,y$ neighbors in $\IZ^d$, and $\widetilde{g}(\cdot,\cdot)$ is also the corresponding Green density of this discrete-time walk). Then (see (2.2.13), p.~79 of \cite{FukuOshiTake11})
\begin{equation}\label{1.50}
\mathrm{\widetilde{c}ap}(M)=\sum_{x\in M}\widetilde{e}_{M}(x).
\end{equation}

\n
Moreover, the following identities, analogues of (\ref{eq:eKg1})
and (\ref{eq:sweeping}), are valid:
\begin{equation}\label{eq:eKg1tilted}
\sum_{y\in M}\widetilde{g}(x,y)\widetilde{e}_{M}(y)=1, \;\; \mbox{for all} \; x\in M,
\end{equation}
and for $M\subset M'\subset\subset\mathbb{Z}^{d}$,
\begin{equation}\label{eq:sweeping-1}
\widetilde{P}_{\widetilde{e}_{M'}}[H_{M}<\infty,X_{H_{M}}=y]=\sum_{x\in M'}\widetilde{e}_{M'}(x)\widetilde{h}_{M}(x,y)=\widetilde{e}_{M}(y) \;\mbox{for all $y \in M$},
\end{equation}
where for $A \subseteq B \subseteq \IZ^d$, $x \in \IZ^d$, $y \in A$,
\begin{equation}\label{eq:tiltedhdef}
\widetilde{h}_{A}(x,y)=\widetilde{P}_{x}[H_{A}<\infty,X_{H_{A}}=y]\quad\widetilde{h}_{A,B}(x,y)=\widetilde{P}_{x}[H_{A}<T_{B},X_{H_{A}}=y]
\end{equation}
are the respective tilted entrance measure in $A$ and tilted entrance measure in $A$  relative to $B$, when starting at $x$.

\medskip
We now turn to continuous-time random interlacements. We refer to \cite{Szni12b} for more details. We define $\widehat{W}^{*}=\widehat{W}/\sim$,
where $\widehat{w}\sim\widehat{w}'$ is defined as $\widehat{w}(\cdot)=\widehat{w}'(\cdot + k)$ for some $k \in \IZ$, for $\widehat{w},\widehat{w}'\in\widehat{W}$.
We also define the canonical map as $\pi^{*}:\widehat{W}\to\widehat{W}^{*}$.
We write $\widehat{W}{}_{M}^{*}$ for the subset of $\widehat{W}^{*}$ of trajectories modulo time-shift that intersect $M\subset\subset\mathbb{Z}^{d}$. For $\widehat{w}^{*}\in\widehat{W}^{*}_{M}$, we write $\widehat{w}^{*}_{M,+}$ for the unique element of $\widehat{W}^{+}$, which follows $\widehat{w}^{*}$ step by step from the first time it enters $M$.

\medskip
The continuous-time random interlacement can be seen as a Poisson
point process on the space $\widehat{W}^{*}$, with intensity measure
$u\,\widehat{\nu}$, where $u>0$ and $\widehat{\nu}$ is a $\sigma$-finite
measure on $\widehat{W}$ such that its restriction to $\widehat{W}{}_{M}^{*}$
(denoted by $\widehat{\nu}_{M}$), is equal to $\pi^{*}\circ\widehat{Q}{}_{M}$,
where $\widehat{Q}{}_{M}$ is a finite measure on $\widehat{W}$ such that (see (1.7) in \cite{Szni12b})
if $(X_{t})_{t\in\mathbb{R}}$, is the continuous-time process attached
to $\widehat{w}\in\widehat{W}$, then
\begin{equation}\label{eq:rimespty0}
\widehat{Q}_{M}[X_{0}=x]=e_{M}(x),
\end{equation}
and when $e_{M}(x)>0$,
\begin{equation}\label{eq:rimespty}
\begin{array}{l}
\mbox{under $\widehat{Q}_{M}$ conditioned on $X_{0}=x,\:(X_{t})_{t\geq0}$ and the right-continuous}
\\
\mbox{regularization of $(X_{-t})_{t > 0}$ are independent and have same respective}
\\
\mbox{distribution as $(X_{t})_{t\geq0}$ under $P_{x}$ and $X$ after its first jump under}
\\
\mbox{$P_{x}[\cdot|\widetilde{H}_{M}=\infty]$}.
\end{array}
\end{equation}

\n
We define the space $\Omega$ of point measures on $\widehat{W}^{*}$ as
\begin{equation}\label{eq:1.57}
\Omega=\{\widehat{\omega}=\sum_{i\geq0}\delta_{\widehat{w}_{i}^{*}};\widehat{w}_{i}^{*}\in\widehat{W}^{*}\textrm{ for all }i\geq0,\:\widehat{\omega}(\widehat{W}{}_{M}^{*})<\infty\textrm{ for all }M\subset\subset\mathbb{Z}^{d}\}.
\end{equation}

\n
If $F:\widehat{W}^{*}\to\mathbb{R}$ and $\widehat{\omega}=\sum_{i}\delta_{\widehat{w}_{i}^{*}}$,
we write $<\widehat{\omega},F>=\sum_{i}F(\widehat{w}_{i}^{*})$ for
the integral of $F$ with respect to $\widehat{\omega}$. Given $M\subset\subset\mathbb{Z}^d$ and $\widehat{\omega} = \sum_{i\geq 0}\delta_{\widehat{w}^{*}_{i}}$ in $\Omega$, we let $\mu_M(\widehat{\omega})$ stand for the point measure on $\widehat{W}^{+}$, $\mu_M(\widehat{\omega})=\sum_{i\geq 0} 1_{\widehat{w}^{*}_{i}\in\widehat{W}^{*}_{M}}\delta_{(\widehat{w}^{*}_{i})_{M,+}}$, which collects the cloud of onward trajectories after the first entrance in $M$ (see below (\ref{eq:tiltedhdef}) for notation).

\medskip
We write $\mathbb{P}_{u}$ for the probability measure governing random
interlacements at level $u$, that is the canonical law on $\Omega$
of the Poisson point process on $\widehat{W}^{*}$ with intensity
measure $u\,\widehat{\nu}$. We write $\mathbb{E}_{u}$ for its
expectation. Given $\widehat{\omega}=\sum_{i}\delta_{\widehat{w}_{i}^{*}}$,
we define the interlacement set and vacant set at level $u$ respectively
as the random subsets of $\mathbb{Z}^{d}$:
\begin{equation}\label{eq:Iudef}
\mathcal{I}^{u}(\widehat{\omega})=\{\cup_{i}\mathrm{Range}(\widehat{w}_{i}^{*})\}
\end{equation}
where for $\widehat{w}^{*}$ in $\widehat{W}^{*}$, $\mathrm{Range}(\widehat{w}^{*})$ stands for the set of points in $\mathbb{Z}^{d}$ visited by any $\widehat{w}$ in
$\widehat{W}$ with $\pi^{*}(\widehat{w})=\widehat{w}^{*}$, and
\begin{equation}\label{eq:vudef}
\mathcal{V}^{u}=\mathbb{Z}^{d}\backslash(\mathcal{I}^{u}(\widehat{\omega})).
\end{equation}
The above random sets have the same law as $\mathcal{I}^{u}$ or $\mathcal{V}^{u}$ in \cite{Szni10a}.

\medskip
The connectivity function of the vacant set of random interlacements is known to have a stretched-exponential
decay when the level exceeds a certain critical value
(see Theorem 4.1 of \cite{Szni12a}, or Theorem 0.1 of \cite{SidoSzni10}, and Theorem 3.1 of \cite{PopoTeix13} for recent developments). Namely, there exists a $u_{**} \in (0,\infty)$, which, for our purpose in this article, can be characterized as the smallest positive number such that for all $u>u_{**}$,
\begin{equation}\label{eq:supercrit}
\mathbb{P}_{u}[0\overset{\mathcal{V}^{u}}{\leftrightarrow}\partial B_{\infty}(0,N)]\leq c_{2}(u)e^{-c_{3}(u)N^{c_{4}(u)}}\textrm{, for all }N\geq0.
\end{equation}

\medskip\n
(actually, by Theorem 3.1 of \cite{PopoTeix13}, one can choose $c_4 = 1$, when $d \ge 4$, and $c_4 = \frac{1}{2}$ or any other value in $(0,1)$, when $d=3$).

\medskip
We also wish to recall a classical result on relative entropy which will be helpful in Section 2. For $\widetilde{\mathbb{P}}$ absolutely continuous with respect to $\mathbb{P}$, the relative entropy of $\widetilde{\mathbb{P}}$ with respect to $\mathbb{P}$ is defined as
\begin{equation}\label{eq:relentrodef}
H(\widetilde{\mathbb{P}}|\mathbb{P})=\widetilde{\mathbb{E}}\Big[\log\frac{d\mathbb{\widetilde{P}}}{d\mathbb{P}}\Big]=\mathbb{E}\Big[\frac{d\mathbb{\widetilde{P}}}{d\mathbb{P}}\log\frac{d\mathbb{\widetilde{P}}}{d\mathbb{P}}\Big]\in[0,\infty].
\end{equation}

\smallskip\n
For an event $A$ with positive $\widetilde{\mathbb{P}}$-probability,
we have the following inequality (see p.~76 of \cite{DeusStro89}):
\begin{equation}\label{eq:Entropychange}
\mathbb{P}[A]\ge\widetilde{\mathbb{P}}[A]e^{-\frac{1}{\widetilde{\mathbb{P}}[A]}(H(\widetilde{\mathbb{P}}|\mathbb{P})+\frac{1}{e})}.
\end{equation}

\smallskip\n
We end this section by recalling one property of the Poisson point
process on general spaces. It rephrases Lemma 1.4 of \cite{LiSzni13}. Let
$\mu$ be a Poisson point process on $E$ with fi{}nite intensity
measure $\eta$ (i.e. $\eta(E)<\infty$), and let $\Phi:E\to\mathbb{R}$
be a measurable function. Then, one has
\begin{equation}\label{eq:Laplacian}
\textrm{ }E[e^{<\omega,\Phi>}]=e^{\int_{E}e^{\Phi}-1d\mu}
\end{equation}

\n
(this is an identity in $(0,+ \infty]$).

\bigskip
\section{The tilted interlacements}

In this section, we define a new probability measure $\widetilde{\mathbb{P}}_{N}$
on $\widehat{W}^{*}$, which is absolutely continuous with respect
to $\mathbb{P}_{u}$, see Proposition \ref{prop:tiltedPdef}. It governs a Poisson point process on $\widehat{W}^{*}$, which corresponds to the ``tilted random interlacements''. Intuitively, these tilted interlacements describe a kind of slowly space-modulated random interlacements. The motivation for the exponential tilt entering the definition of $\widetilde{\mathbb{P}}_{N}$ actually stems from the analysis of certain large deviations of the occupation-time profile of random interlacements considered in \cite{LiSzni13}, see Remark \ref{rem2.5} below. In Proposition \ref{prop:tiltedPdef} we compute the relative entropy of $\widetilde{\mathbb{P}}_{N}$ with respect to $\mathbb{P}_{u}$, and we then relate this result to the capacity of $K$ after a suitable limiting procedure, see Proposition \ref{prop:limsup}.

\medskip
We begin with the construction of the new measure $\widetilde{\mathbb{P}}_{N}$, which will correspond to an exponential tilt of $\mathbb{P}_{u}$, see (\ref{eq:Pproba}).

\medskip
We recall that $K$ is a compact subset of $\mathbb{R}^{d}$ as above
(\ref{eq:blowupdef}). We consider $\delta,\epsilon$ in $(0,1)$, and let $U$ and $\widetilde{U}$ be the open Euclidean balls centered at $0$ with respective radii $r_{U}$ and $r_{\widetilde{U}}$, where $r_U> 0$ and $r_{\widetilde{U}} = r_U + 4 $. We assume that $r_U$ is sufficiently large such that $K^{2\delta}\subset U\subset\widetilde{U}\subset\mathbb{R}^{d}$ (recall that $K^{2\delta}$ stands for the closed $2\delta$-neighborhood of $K$, see below (\ref{eq:boundarydef})).  By the end of this section we will eventually let $r_{U},r_{\widetilde{U}}$ tend to infinity and then let $\delta$ tend to 0. We denote by $W_z$ the Wiener measure starting from $z$ and by $H_F$, for $F$ a closed subset of $\mathbb{R}^d$, the entrance time of the canonical Brownian motion in $F$. We write
\begin{equation}\label{eq:hdef}
h(z)=W_{z}[H_{K^{2\delta}}<T_{U}],\ z\in\mathbb{R}^{d},
\end{equation}

\medskip\n
for the equilibrium potential of $K^{2\delta}$ relative to $U$. For
$\eta\in(0,\delta)$ and $\phi^{\eta}$ a non-negative smooth function
supported in $B_{\mathbb{R}^{d}}(0,\eta)$ such that $\int \phi^{\eta}(z) dz=1$,
we write
\begin{equation}\label{2.2}
h^{\eta}=h*\phi^{\eta}
\end{equation}
for the convolution of $h$ and $\phi^{\eta}$.

\medskip
We then define the restriction to $\mathbb{Z}^{d}$ of the blow-up of $h$ as
\begin{equation}\label{2.3}
h_{N}(x)=h^{\eta}\Big(\frac{x}{N}\Big),\; \mbox{for} \; x\in\mathbb{Z}^{d}.
\end{equation}
We now specify our choice of $f$ in (\ref{1.29}) as
\begin{align}
f(x) & =  \Big(\sqrt{\frac{u_{**}+\epsilon}{u}}-1\Big)\,h_{N}(x)+1,\label{eq:fdef}
\intertext{and recall that}
V & =  -\frac{\Delta_{dis}f}{f}.  \nonumber   
\end{align}

\medskip\n
$f$ and $V$ tacitly depend upon $\epsilon,\delta,\eta,N$.
We drop this dependence from the notation for the sake of simplicity. We denote by $\widetilde{U}_N$ the discrete blow-up of $\widetilde{U}$ (as in (\ref{eq:blowupdef}) or (\ref{eq:blowupdef-1})). We also note that
\begin{equation}\label{eq:fconstant}
\begin{array}{l}
\mbox{$f=1$ on ($\mathbb{Z}^d\backslash\widetilde{U}_N) \cup \overline{\partial_i \widetilde{U}_N}$, and for large $N$, $f=\sqrt{\frac{u_{**}+\epsilon}{u}}$ on $K^{\delta}_N$.}
\end{array}
\end{equation}
 From now on, we will denote by $\widetilde{P}_{x}$ the probability measure
defined in (\ref{1.45}), with $f$ as in (\ref{eq:fdef}).

\medskip
We define a function $F$ on $\widehat{W}^{*}$ through
\begin{equation}\label{2.5}
F(\widehat{w}^{*})=
\left\{ \begin{array}{l}
\dis\int_{0}^{\infty} \!\! V(X_{s})(\widehat{w}_{\widetilde{U}_N})ds, \; \mbox{for $\widehat{w}^{*}\in\widehat{W}_{\widetilde{U}_{N}}^{*}$, with $\pi^{*}(\widehat{w})=\widehat{w}^{*}$, and}
\\[2ex]
\mbox{$\widehat{w}_{\widetilde{U}_N}$ the time-shift of $\widehat{w}$ at its first entrance in $\widetilde{U}_N$,}
\\[3ex]
0,    \qquad  \textrm{otherwise.}
\end{array}\right.
\end{equation}

\medskip\n
We refer to (\ref{eq:1.57}) for the definition of $\Omega$.
\begin{prop}\label{prop:tiltedPdef}
\begin{equation}\label{eq:Pproba}
\mathbb{\widetilde{P}}_{N}=e^{<\widehat{\omega},F>}\mathbb{P}_{u}\textrm{ defines a probability measure on $\Omega$}.
\end{equation}
Moreover, under $\widetilde{\mathbb{P}}_{N}$
\begin{equation}\label{2.8}
\begin{array}{l}
\mbox{the canonical point measure $\widehat{\omega}$ is a Poisson point process on $\widehat{W}^{*}$ with }\\
\mbox{intensity measure $u\widetilde{\nu} $, where $\widetilde{\nu}=e^{F}\widehat{\nu}$},
\end{array}
\end{equation}
and for $M\subset\subset\mathbb{Z}^d$ (see below (\ref{eq:1.57}) for notation),
\begin{equation}\label{eq:tiltedentrancemes}
\begin{array}{l}
\mbox{$\mu_M$ is a Poisson point process on $\widehat{W}^+$ with intensity measure $u\widetilde{P}_{\widetilde{e}_M}$.}\\
\end{array}
\end{equation}
\end{prop}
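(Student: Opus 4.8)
The plan is to verify the three assertions in order, the key point being that the exponential tilt by $e^{\langle\widehat\omega, F\rangle}$ is exactly the kind of transformation that turns a Poisson point process into another Poisson point process with an exponentially reweighted intensity. First I would observe that $F$, as defined in (\ref{2.5}), is supported on the set $\widehat W^*_{\widetilde U_N}$ of trajectories modulo time-shift that meet $\widetilde U_N$, which is a finite subset of $\mathbb Z^d$; consequently $\widehat\nu$ restricted to the support of $F$ is the finite measure $\widehat\nu_{\widetilde U_N} = \pi^* \circ \widehat Q_{\widetilde U_N}$. To check that $\widetilde{\mathbb P}_N$ in (\ref{eq:Pproba}) is a probability measure, I would apply the exponential identity (\ref{eq:Laplacian}) for Poisson point processes with finite intensity: with $E = \widehat W^*_{\widetilde U_N}$, $\eta = u\,\widehat\nu_{\widetilde U_N}$ and $\Phi = F$, one gets
\begin{equation*}
\mathbb E_u\big[e^{\langle\widehat\omega, F\rangle}\big] = \exp\Big(u\int_{\widehat W^*_{\widetilde U_N}} (e^{F}-1)\, d\widehat\nu\Big),
\end{equation*}
so $\widetilde{\mathbb P}_N$ is a probability measure as soon as the right-hand side is finite; finiteness will follow from $\int (e^F - 1)\, d\widehat\nu \le \int e^F\, d\widehat\nu = \widetilde\nu(\widehat W^*_{\widetilde U_N})$, and the latter is finite because, decomposing $\widehat Q_{\widetilde U_N}$ over the starting point $x = X_0$ via (\ref{eq:rimespty0})--(\ref{eq:rimespty}), the integral of $e^F$ becomes $\sum_{x} e_{\widetilde U_N}(x)\, E_x\big[\exp\{\int_0^\infty V(X_s)\, ds\}\big] = \sum_x e_{\widetilde U_N}(x)\, f(x)$ by (\ref{eq:Minfty}) of Lemma \ref{lem:martingale}, a finite sum since $e_{\widetilde U_N}$ has finite support and $f$ is bounded on it. (The backward part of the doubly-infinite trajectory contributes nothing, since $F$ only reads the forward part from the first entrance in $\widetilde U_N$.)

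Next, for (\ref{2.8}) I would invoke the standard fact that if $\widehat\omega$ is Poisson with intensity $u\widehat\nu$ and $\widetilde{\mathbb P}_N = e^{\langle\widehat\omega, F\rangle}\mathbb P_u$ with $\int(e^F-1)\,d\widehat\nu < \infty$, then under $\widetilde{\mathbb P}_N$ the point process $\widehat\omega$ is Poisson with intensity $e^F\,\widehat\nu = \widetilde\nu$; this is checked by computing, for an arbitrary bounded measurable $G : \widehat W^* \to \mathbb R$ with $G$ supported on some $\widehat W^*_M$, the Laplace functional
\begin{equation*}
\widetilde{\mathbb E}_N\big[e^{\langle\widehat\omega, G\rangle}\big] = \mathbb E_u\big[e^{\langle\widehat\omega, G + F\rangle}\big]\big/\mathbb E_u\big[e^{\langle\widehat\omega, F\rangle}\big] = \exp\Big(u\int (e^{G+F} - e^F)\, d\widehat\nu\Big) = \exp\Big(u\int (e^{G} - 1)\, d\widetilde\nu\Big),
\end{equation*}
again by (\ref{eq:Laplacian}), which identifies the law as that of a Poisson point process with intensity $u\widetilde\nu$ (one must note here that $G + F$ and $F$ are both supported on a common finite-intensity set, so (\ref{eq:Laplacian}) applies, and that the difference $e^{G+F} - e^F = e^F(e^G - 1)$ vanishes off this set).

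Finally, for (\ref{eq:tiltedentrancemes}) I would use the restriction/thinning property of Poisson point processes: $\mu_M(\widehat\omega)$ is the image of the restriction of $\widehat\omega$ to $\widehat W^*_M$ under the measurable map $\widehat w^* \mapsto (\widehat w^*)_{M,+}$, so it is a Poisson point process on $\widehat W^+$ whose intensity is the pushforward of $u\,\widetilde\nu|_{\widehat W^*_M}$ under that map. It then remains to identify this pushforward with $u\,\widetilde P_{\widetilde e_M}$. When $M \supseteq \widetilde U_N$ this follows by a direct computation: the pushforward of $\widehat\nu|_{\widehat W^*_M}$ under $\widehat w^* \mapsto (\widehat w^*)_{M,+}$ is $P_{e_M}$, and multiplying by $e^F$ (which equals $\exp\{\int_0^\infty V(X_s)\, ds\}$ read along the onward trajectory, since the first entrance in $\widetilde U_N \subseteq M$ coincides with the first entrance in $M$ on the relevant trajectories, and $V = 0$ outside $\widetilde U_N$) and using the definition (\ref{1.45}) of $\widetilde P_x$ turns $P_{e_M}$ into $\sum_{x\in M} e_M(x) f(x)\,\widetilde P_x$; one then checks, via the sweeping identity for the tilted walk and the relation between $e_M$, $f$ and $\widetilde e_M$, that $\sum_x e_M(x) f(x)\, \widetilde P_x = \widetilde P_{\widetilde e_M}$. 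For general $M$ one reduces to this case by the sweeping identity (\ref{eq:sweeping-1}): taking $M \subseteq M' = M \cup \widetilde U_N$, the onward trajectories after the first entrance in $M$ form, by the restriction property applied inside the $M'$-onward trajectories, a Poisson process with intensity obtained from $u\,\widetilde P_{\widetilde e_{M'}}$ by stopping at $H_M$ and following onward, which is $u\,\widetilde P_{\widetilde P_{\widetilde e_{M'}}[H_M < \infty,\, X_{H_M} = \cdot\,]} = u\,\widetilde P_{\widetilde e_M}$ by (\ref{eq:sweeping-1}). The main obstacle is this last identification: one has to keep careful track of the relation between the simple-random-walk equilibrium measure $e_M$, the factor $f(x)\big(\tfrac1{2d}\sum_{|e|=1} f(x+e)\big)$ appearing in (\ref{eq:tiltedemdef}), and the Radon--Nikodym density $M_\infty$, so that the reweighting of $P_{e_M}$ by $e^F$ produces exactly $\widetilde P_{\widetilde e_M}$; this is essentially the content of the analogue, for continuous-time interlacements, of the description of $\mu_M$ under $\mathbb P_u$, adapted to the tilted setting, and I would expect the bookkeeping (especially checking that $\widetilde e_M(x) = e_M(x) f(x) \big(\tfrac1{2d}\sum_{|e|=1} f(x+e)\big)$ when restricted to $M \supseteq \widetilde U_N$, where the relevant walks agree outside a finite set) to be the one step requiring genuine care rather than routine manipulation.
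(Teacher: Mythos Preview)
Your argument for (\ref{eq:Pproba}) has a genuine gap. The definition $\widetilde{\mathbb P}_N = e^{\langle\widehat\omega, F\rangle}\mathbb P_u$ carries \emph{no} normalizing constant, so for $\widetilde{\mathbb P}_N$ to be a probability measure you must show $\mathbb E_u\big[e^{\langle\widehat\omega, F\rangle}\big] = 1$, i.e.\ $\int(e^F - 1)\,d\widehat\nu = 0$, not merely that this integral is finite. Your computation actually brings you within one line of this: you obtain $\int_{\widehat W^*_{\widetilde U_N}} e^F\,d\widehat\nu = \sum_x e_{\widetilde U_N}(x) f(x)$, and the support of $e_{\widetilde U_N}$ lies in $\partial_i\widetilde U_N$, where $f \equiv 1$ by (\ref{eq:fconstant}). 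Hence the sum equals $\mathrm{cap}(\widetilde U_N) = \widehat\nu(\widehat W^*_{\widetilde U_N})$ and the integral of $e^F - 1$ vanishes exactly. This is precisely the point of the buffer between $U$ and $\widetilde U$ in the construction of $f$; without invoking it your argument only yields a finite positive measure, and the later entropy computation in Proposition~\ref{prop:entropycalc} (which uses the unnormalized density) would be off by a constant. The same oversight resurfaces in your Laplace-functional computation for (\ref{2.8}), where you divide by $\mathbb E_u[e^{\langle\widehat\omega, F\rangle}]$ as if normalizing; once you know this equals $1$ the division is harmless, but as written it masks the issue.

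For (\ref{eq:tiltedentrancemes}) your strategy---handle $M' = M\cup\widetilde U_N$ first and then sweep down to $M$ via (\ref{eq:sweeping-1})---is exactly the paper's route, though the paper packages it as a single computation rather than two cases. The bookkeeping you flag as ``requiring genuine care'' is again resolved by (\ref{eq:fconstant}): for $x\in\partial_i\widetilde M$ one has $f(x) = 1$ and $f(x+e) = 1$ for every neighbor (since $x$ is either outside $\widetilde U_N$ or on $\partial_i\widetilde U_N$, and the closure $\overline{\partial_i\widetilde U_N}$ is covered by (\ref{eq:fconstant})), so that both the tilted escape probability $\widetilde P_x[\widetilde H_{\widetilde M} = \infty]$ coincides with $P_x[\widetilde H_{\widetilde M} = \infty]$ and the weight $f(x)\cdot\frac{1}{2d}\sum_{|e|=1}f(x+e)$ in (\ref{eq:tiltedemdef}) equals $1$. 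This gives $\widetilde e_{\widetilde M} = e_{\widetilde M}$ directly, which is what you need to close the identification $\sum_x e_{\widetilde M}(x) f(x)\,\widetilde P_x = \widetilde P_{\widetilde e_{\widetilde M}}$ (and again $f(x) = 1$ on the relevant support). So the obstacle you anticipated dissolves once (\ref{eq:fconstant}) is used; the paper's proof makes this explicit at (\ref{eq:equmeseqiv}).
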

\begin{proof}
We begin with the proof of (\ref{eq:Pproba}). By the first equality of (\ref{eq:fconstant}) and using (\ref{eq:Minfty}) of Lemma \ref{lem:martingale}, we see that for all $x\in\partial_{i}\widetilde{U}_{N}$,
\begin{equation}\label{2.10}
E_{x}[e^{\int_{0}^{\infty}V(X_{s})ds}]=1.
\end{equation}

\n
Since $F$ vanishes outside $\widehat{W}_{\widetilde{U}_{N}}^{*}$ , it follows that
\begin{equation}
\int_{\widehat{W}^{*}}(e^F-1)d\widehat{\nu} = \int_{\widehat{W}^*_{\widetilde{U}_N}} (e^F - 1) d \widehat{\nu} \overset{(\ref{eq:rimespty})}{=}E_{e_{\widetilde{U}_{N}}}[e^{\int_{0}^{\infty}V(X_{s})ds}-1] \stackrel{(\ref{2.10})}{=} 0,\label{eq:probverification}
\end{equation}
and by (\ref{eq:Laplacian}),
\begin{equation}
\mathbb{E}_u[e^{<\widehat{\omega},F>}]=1,
\end{equation}
whence (\ref{eq:Pproba}). We now turn to the proof of (\ref{2.8}).

\medskip
Writing $\widetilde{\mathbb{E}}_{N}$ as the expectation under $\widetilde{\mathbb{P}}_{N}$, taking $G$ a non-negative, measurable function on $\widehat{W}^{*}$,
we have
\begin{equation}\label{2.12}
\begin{array}{lcl}
\widetilde{\mathbb{E}}_{N}[e^{-<\widehat{\omega},G>}] &\!\!\!\!\!\overset{(\ref{eq:Pproba})}{=} &\!\!\!\!\! \mathbb{E}_{u}[e^{<\widehat{\omega},F-G>}]
\\[1ex]
 &\!\!\!\!\! \overset{(\ref{eq:probverification})}{=} &\!\!\!\!\! \mathbb{E}_{u}[e^{<\widehat{\omega},F-G>}]e^{-u\int(e^{F}-1)d\widehat{\nu}}
\\[1ex]
 &\!\!\!\!\! \underset{{\rm on} \; \widehat{W}_{\widetilde{U}_N}}{\stackrel{(\ref{eq:Laplacian})}{=}} &\!\!\!\!\!e^{u\int(e^{F-G}-1)d\widehat{\nu}-u\int(e^{F}-1)d\widehat{\nu}}
\\[1ex]
 &\!\!\!\!\!= &\!\!\!\!\! e^{u\int(e^{-G}-1)e^{F}d\widehat{\nu}}.
\end{array}
\end{equation}
This identifies the Laplace transform of $\widehat{\omega}$ under
$\widetilde{\mathbb{P}}_{N}$ and (\ref{2.8}) follows by Proposition 36,
p.~130 of \cite{Resn87}.

\smallskip
There remains to prove (\ref{eq:tiltedentrancemes}). By (\ref{2.8}) and the definition of $\mu_M$ (below (\ref{eq:1.57})), we see that
$\mu_M$ is a Poisson point process on $\widehat{W}^+$  with intensity measure $u\gamma_M$, where $\gamma_M$ is the image of $1_{\widehat{W}{}_{M}^{*}}\widetilde{\nu}$ under the map $\widehat{w}^{*} \rightarrow \widehat{w}^{*}_{M,+}$ (see above (\ref{eq:rimespty0}) for notation). The claim (\ref{eq:tiltedentrancemes}) will thus follow once we show that
\begin{equation}\label{2.13}
\gamma_M = \widetilde{P}_{\widetilde{e}_M}.
\end{equation}
We introduce $\widetilde{M}=M\cup \widetilde{U}_N$. We observe that
\begin{equation}\label{eq:equmeseqiv}
\widetilde{e}_{\widetilde{M}}=e_{\widetilde{M}}.
\end{equation}
Indeed, this follows by (\ref{eq:emdef}) and (\ref{eq:tiltedemdef}), together with the first equality in (\ref{eq:fconstant}).
We also note that in (\ref{2.5}) the function $F$ does not change if we replace 
$\widetilde{U}_N$ in the definition  by $\widetilde M$, since $\widetilde{U}_N \subset \widetilde M$, and $V$ vanishes outside $\widetilde{U}_N$. Therefore, in order to prove (\ref{2.13}), it suffices to verify that for any bounded measurable function $g:\widehat{W}^+\to \mathbb{R}$, its integral with respect to $\gamma_M$ coincides with that with respect to $\widetilde{P}_{\widetilde{e}_M}$.
We begin with $\langle\gamma_M,g\rangle$. By the definition of $\gamma_M$:
\begin{equation}
\begin{array}{lcl}
<\gamma_M,g> &\!\!\! = & \!\! \dis\int_{\widehat{W}^*_{\widetilde{M}}}e^{F}1_{\{\widehat{w}^*\in\widehat{W}^*_M\}}g(\widehat{w}^*_{M,+})d\widehat\nu(\widehat{w}^*)
\\[1ex]
&\!\! \underset{(\ref{eq:rimespty})}{\overset{(\ref{eq:rimespty0})}{=}} & \!\!\!E_{e_{\widetilde{M}}}[e^{\int_0^{\infty}V(X_s)ds}g(\widehat{w}_M)1_{\{H_M<\infty\}}],\\
\end{array}
\end{equation}
where for $\widehat{w}\in\widehat{W}^+$, we let $\widehat{w}_M\in\widehat{W}^+$ stand for the time-shift of $\widehat{w}$ starting at its first entrance in $M$.
We then apply the strong Markov property at $H_M$, and decompose according to where the walks enter $M$,
\begin{equation}
\begin{array}{lcl}\label{eq:gamma}
<\gamma_M,g> &\!\!\!\!\!\!\overset{\textrm{Markov}}{=} &\!\!\!\!
   E_{e_{\widetilde{M}}}[e^{\int_0^{H_M}V(X_s)ds}1_{\{H_M<\infty\}} E_{X_{H_M}}[e^{\int_0^{\infty}V(X_s)ds}g]]   
\\[1ex]
& \!\!\!\!\!\!= &\!\!\!\! E_{e_{\widetilde{M}}}[f(X_{H_M})e^{\int_0^{H_M}V(X_s)ds}1_{\{H_M<\infty\}} E_{X_{H_M}}\Big[\frac{1}{f(X_0)}e^{\int_0^{\infty}V(X_s)ds}g]\Big] 
\\[1ex]
&\!\!\!\!\!\! = &\!\!\!\!\!\!\!\!  \dis\sum\limits_{y\in\partial_i M} E_{e_{\widetilde{M}}}[f(y)e^{\int_0^{H_M}V(X_s)ds}1_{\{H_M<\infty,X_{H_M}=y\}}] E_{y}\Big[\textstyle\frac{1}{f(y)}e^{\int_0^{\infty}V(X_s)ds}g\Big]   
\\[1ex]
&\!\!\!\!\!\! \underset{\rm Markov}{\stackrel{(\ref{1.45})}{=}} &\!\!\!\!\!  \dis\sum\limits_{y\in\partial_i M} \widetilde{P}_{e_{\widetilde{M}}}[H_M<\infty,X_{H_M}=y]\widetilde{E}_y[g].   
\end{array}
\end{equation}

\n
On the other hand, we can express $\widetilde{P}_{\widetilde{e}_M}$ in terms of the tilted entrance measure by the sweeping identity (see (\ref{eq:sweeping-1})) and incorporate the fact that the tilted equilibrium measure of $\widetilde{M}$ coincides with the standard equilibrium measure of $\widetilde{M}$:
\begin{equation}\label{eq:tiltedPeM}
\begin{array}{lcl}
\widetilde{E}_{\widetilde{e}_{M}}[g] & \overset{(\ref{eq:sweeping-1})}{=} & \sum_{y\in\partial_i M} \widetilde{P}_{\widetilde{e}_{\widetilde{M}}}[H_M<\infty,X_{H_M}=y]\widetilde{E}_y[g]\
\\
& \overset{(\ref{eq:equmeseqiv})}{=} & \sum_{y\in\partial_i M} \widetilde{P}_{e_{\widetilde{M}}}[H_M<\infty,X_{H_M}=y]\widetilde{E}_y[g].
\end{array}
\end{equation}
Comparing (\ref{eq:gamma}) and (\ref{eq:tiltedPeM}), we obtain (\ref{2.13}).
\end{proof}

We will call the canonical Poisson point process under $\widetilde{\mathbb{P}}_{N}$
the tilted random interlacements.

\begin{remark}\label{rem2.2} \rm
The tilted interlacements do retain an interlacement-like character because $\widetilde{\nu} = e^F  \widehat{\nu}$ is a measure on $\widehat{W}^{*}$, which has the following property. Its restriction to $\widehat{W}_{M}^{*}$, for $M\subset\subset\mathbb{Z}^{d}$, is equal to $\pi^{*}\circ \widetilde{Q}{}_{M}$, where
\begin{equation}
\widetilde{Q}_{M}[X_{0}=x]=\widetilde{e}_{M}(x),
\end{equation}
and when $\widetilde{e}_{M}(x)>0$,
\begin{equation}\label{eq:rimespty-1}
\begin{array}{l}
\mbox{under $\widetilde{Q}_{M}$ conditioned on $X_{0}=x,\:(X_{t})_{t\geq0}$ and the right-continuous}
\\
\mbox{regularization of $(X_{-t})_{t > 0}$ are independent and with same respective}
\\
\mbox{distribution as $(X_{t})_{t\geq0}$ under $\widetilde{P}_{x}$ and $X$ after its first jump under}
\\
\mbox{$\widetilde{P}_{x}[\cdot|\widetilde{H}_{M}=\infty]$}.
\end{array}
\end{equation}

\medskip\n
We do not need the above fact, but mention it because it states the property analogous to (\ref{eq:rimespty0}) and (\ref{eq:rimespty}) satisfied by $\tilde{\nu}$.\hfill $\square$
\end{remark}

We will now calculate the relative entropy of $\widetilde{\mathbb{P}}_{N}$
with regard to $\mathbb{P}_{u}$ and relate it to the Dirichlet form
of $h_{N}$ (see (\ref{eq:Dirichletdef}) for notation).
\begin{prop}
\label{prop:entropycalc}
\begin{equation}
H(\widetilde{\mathbb{P}}_{N}|\mathbb{P}_{u}) = (\sqrt{u_{**}+\epsilon}-\sqrt{u})^{2}\mathcal{E}_{\mathbb{Z}^{d}}(h_{N},h_{N}).\label{eq:hdisc}
\end{equation}
\end{prop}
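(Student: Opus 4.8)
The plan is to compute $H(\widetilde{\mathbb{P}}_N|\mathbb{P}_u)$ directly from the definition $H(\widetilde{\mathbb{P}}_N|\mathbb{P}_u)=\widetilde{\mathbb{E}}_N\big[\log\frac{d\widetilde{\mathbb{P}}_N}{d\mathbb{P}_u}\big]$, using that the Radon-Nikodym derivative is $e^{\langle\widehat{\omega},F\rangle}$ by \eqref{eq:Pproba}. Thus $H(\widetilde{\mathbb{P}}_N|\mathbb{P}_u)=\widetilde{\mathbb{E}}_N[\langle\widehat{\omega},F\rangle]$. Since under $\widetilde{\mathbb{P}}_N$ the point measure $\widehat{\omega}$ is Poisson with intensity $u\widetilde{\nu}=ue^F\widehat{\nu}$ by \eqref{2.8}, the Campbell formula gives $\widetilde{\mathbb{E}}_N[\langle\widehat{\omega},F\rangle]=u\int_{\widehat{W}^*}F\,e^F\,d\widehat{\nu}$. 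Because $F$ is supported on $\widehat{W}^*_{\widetilde{U}_N}$ and equals $\int_0^\infty V(X_s)(\widehat{w}_{\widetilde{U}_N})\,ds$ there, I would rewrite this via \eqref{eq:rimespty0}--\eqref{eq:rimespty} as an expectation under $E_{e_{\widetilde{U}_N}}$: concretely, $H(\widetilde{\mathbb{P}}_N|\mathbb{P}_u)=u\,E_{e_{\widetilde{U}_N}}\big[\big(\int_0^\infty V(X_s)\,ds\big)\exp\{\int_0^\infty V(X_s)\,ds\}\big]$. The exponential factor $e^{\int_0^\infty V(X_s)ds}$ should be absorbed by passing to the tilted walk: by \eqref{1.45}, $\widetilde{E}_x[\Phi]=E_x\big[\frac{1}{f(X_0)}e^{\int_0^\infty V\,ds}\Phi\big]$, and on $\partial_i\widetilde{U}_N$ one has $f=1$ by \eqref{eq:fconstant}, so $H(\widetilde{\mathbb{P}}_N|\mathbb{P}_u)=u\,\widetilde{E}_{e_{\widetilde{U}_N}}\big[\int_0^\infty V(X_s)\,ds\big]$.

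Next I would evaluate $\widetilde{E}_{e_{\widetilde{U}_N}}\big[\int_0^\infty V(X_s)\,ds\big]$ using the potential theory of the tilted walk. Writing this as $\sum_y V(y)\,\widetilde{E}_{e_{\widetilde{U}_N}}\big[\int_0^\infty 1_{\{X_s=y\}}\,ds\big]=\sum_y V(y)\widetilde{\lambda}(y)\,\widetilde{g}(e_{\widetilde{U}_N},y)$, I note that $V$ is supported inside $\widetilde{U}_N$ (in fact inside $\widetilde{U}_N$ away from its boundary by \eqref{eq:fconstant}), and by \eqref{eq:equmeseqiv}-type reasoning, $e_{\widetilde{U}_N}=\widetilde{e}_{\widetilde{U}_N}$ since $f\equiv 1$ on $\partial_i\widetilde{U}_N$. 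The key identity \eqref{eq:eKg1tilted} says $\sum_{y}\widetilde{g}(x,y)\widetilde{e}_{\widetilde{U}_N}(y)=1$ for $x\in\widetilde{U}_N$, equivalently, by symmetry of $\widetilde{g}$ and reversibility, $\widetilde{g}(\widetilde{e}_{\widetilde{U}_N},y)=1$ for $y\in\widetilde{U}_N$ in the appropriate normalization. Hence the sum collapses to $\sum_{y}V(y)\widetilde{\lambda}(y)=\sum_y V(y) f^2(y)$. Recalling $V=-\Delta_{dis}f/f$, this equals $-\sum_y f(y)\,\Delta_{dis}f(y)$, which by summation by parts (discrete Green's identity, valid since $f-1$ is finitely supported) equals $\mathcal{E}_{\mathbb{Z}^d}(f,f)$ with the normalization of \eqref{eq:Dirichletdef}.

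Finally, since $f=1+(\sqrt{(u_{**}+\epsilon)/u}-1)h_N$, the function $f-1$ is a constant multiple of $h_N$, so by bilinearity $\mathcal{E}_{\mathbb{Z}^d}(f,f)=\mathcal{E}_{\mathbb{Z}^d}(f-1,f-1)=(\sqrt{(u_{**}+\epsilon)/u}-1)^2\,\mathcal{E}_{\mathbb{Z}^d}(h_N,h_N)$. Multiplying by the prefactor $u$ gives $H(\widetilde{\mathbb{P}}_N|\mathbb{P}_u)=u(\sqrt{(u_{**}+\epsilon)/u}-1)^2\,\mathcal{E}_{\mathbb{Z}^d}(h_N,h_N)=(\sqrt{u_{**}+\epsilon}-\sqrt{u})^2\,\mathcal{E}_{\mathbb{Z}^d}(h_N,h_N)$, which is \eqref{eq:hdisc}.

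The step I expect to be the main obstacle is the careful bookkeeping in replacing the ordinary walk by the tilted walk and in justifying the collapse $\sum_y \widetilde{g}(\widetilde{e}_{\widetilde{U}_N},y)V(y)\widetilde{\lambda}(y)=\sum_y V(y)\widetilde{\lambda}(y)$: one must verify that the support of $V$ lies in $\widetilde{U}_N$ where $\widetilde{g}(\widetilde{e}_{\widetilde{U}_N},\cdot)\equiv 1$ (this uses \eqref{eq:fconstant} and \eqref{eq:eKg1tilted}), and that all the sums and expectations are finite and absolutely convergent, so that Campbell's formula, Fubini, and the discrete summation by parts are all legitimate — finiteness following from $V$ being finitely supported and from Lemma \ref{lem:martingale} (in particular \eqref{eq:mtunifint}, \eqref{eq:Minfty}). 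The identity \eqref{eq:probverification} already records $E_{e_{\widetilde{U}_N}}[e^{\int_0^\infty V\,ds}-1]=0$, which is the $\langle\widehat\omega,1\rangle$ analogue and reassures us the tilt is well-normalized; the entropy computation is the "weighted" version of this same computation.
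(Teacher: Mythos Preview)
Your proof is correct and follows essentially the same approach as the paper's. The only cosmetic difference is that where the paper invokes \eqref{eq:tiltedentrancemes} to pass directly from $u\langle\widetilde{\nu},F\rangle$ to $u\,\widetilde{E}_{\widetilde{e}_{\widetilde{U}_N}}\big[\int_0^\infty V(X_s)\,ds\big]$, you instead re-derive this step by hand (writing $u\int F e^F\,d\widehat{\nu}$, converting to an $E_{e_{\widetilde{U}_N}}$-expectation with weight $e^{\int V}$, and then absorbing the weight via \eqref{1.45} and $f=1$ on $\partial_i\widetilde{U}_N$); from that point on --- the collapse via \eqref{eq:eKg1tilted} to $-u\sum_x f(x)\Delta_{dis}f(x)$ and the summation by parts to $(\sqrt{u_{**}+\epsilon}-\sqrt{u})^2\mathcal{E}_{\mathbb{Z}^d}(h_N,h_N)$ --- the two arguments are identical.
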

\begin{proof}
By the definition of relative entropy (see (\ref{eq:relentrodef})),
\begin{equation}\label{2.15}
H(\widetilde{\mathbb{P}}_{N}|\mathbb{P}_{u})=\widetilde{\mathbb{E}}_N[\log\frac{d\widetilde{\mathbb{P}}_{N}}{d\mathbb{P}_{u}}]  \stackrel{(\ref{eq:Pproba})}{=} \widetilde{\mathbb{E}}_N[<\widehat{\omega},F>],
\end{equation}
and
\begin{equation}\label{2.16}
\begin{array}{lcl}
\widetilde{\mathbb{E}}_N[<\widehat{\omega},F>] &\!\!\!\!\! = &\!\!\!\!\! u<\widetilde{\nu},F>
\\[2ex]
 & \!\!\!\!\!  \underset{(\ref{eq:tiltedentrancemes})}{\stackrel{(\ref{2.5})}{=}}
 &\!\!\!\!\! u\widetilde{E}_{\widetilde{e}_{\widetilde{U}_{N}}}\Big[\dis\int_{0}^{\infty}V(X_{s})ds\Big]
 \\[3ex]
 &\!\!\!\!\! \overset{(\ref{eq:tiltedgdef})}{=} &\!\!\!\!\! u\sum_{x\in\widetilde{U}_{N},\: x'\in\mathbb{Z}^{d}}\widetilde{e}_{\widetilde{U}_{N}}(x)\widetilde{g}(x,x')V(x')\widetilde{\lambda}(x')
 \\[3ex]
 & \!\!\!\!\!\underset{\mathrm{supp}\: V\subseteq\widetilde{U}_{N}}{\overset{(\ref{eq:eKg1tilted})}{=}} &\!\!\!\!\! u\sum_{x'\in\mathbb{Z}^{d}}V(x')\widetilde{\lambda} (x')
 \\[3ex]
 &\!\!\!\!\! \underset{(\ref{1.28})}{\overset{(\ref{eq:lambdadef})}{=}} &\!\!\!\!\! -u\sum_{x\in\mathbb{Z}^{d}}f(x)\Delta_{dis}f(x).
\end{array}
\end{equation}

\n
We also have, by the definition of $f$ in (\ref{eq:fdef}), that
\begin{equation}\label{eq:disclapcalc}
-u\sum_{x \in\mathbb{Z}^{d}}f(x)\Delta_{dis}f(x)=u\sum_{x\in\mathbb{Z}^{d}}\Big(\sqrt{\frac{u_{**}+\epsilon}{u}}-1\Big)f(x)\Delta_{dis}h_{N}(x)
\end{equation}
and since $h_{N}$ is finitely supported, by the Green-Gauss theorem,
the left-hand side of (\ref{eq:disclapcalc}) equals
\begin{equation}\label{2.19}
\begin{array}{cl}
= &  \!\!\!u\Big(\sqrt{\frac{u_{**}+\epsilon}{u}}-1\Big)\frac{1}{2}\sum_{|x-x'|=1}\frac{1}{2d}(f(x')-f(x))(h_{N}(x')-h_{N}(x))
\\[2ex]
\overset{(\ref{eq:fdef})}{=} &\!\!\!  u\sum_{x'\in\mathbb{Z}^{d}}\Big(\sqrt{\frac{u_{**}+\epsilon}{u}}-1\Big)^{2}\mathcal{E}_{\mathbb{Z}^{d}}(h_{N},h_{N}),
\end{array}
\end{equation}
and (\ref{eq:hdisc}) follows.
\end{proof}
We will now successively let $N\to\infty$, $\eta\to0$, $r_{U}\to\infty$,
and $\delta\to0$. The capacity of $K$ will appear in the limit (in
the above sense) of the properly scaled Dirichlet form of $h_{N}$.
\begin{prop}\label{prop:limsup}
\begin{equation}\label{eq:limsup}
\lim_{\delta\to0}\lim_{r_{U}\to\infty}\lim_{\eta\to0}\lim_{N\to\infty}\frac{1}{N^{d-2}}\mathcal{E}_{\mathbb{Z}^{d}}(h_{N},h_{N})=\frac{1}{d}\mathrm{cap}_{\mathbb{R}^{d}}(K).
\end{equation}
\end{prop}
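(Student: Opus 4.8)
The plan is to track the four successive limits from the inside out, converting the discrete Dirichlet form $\mathcal{E}_{\mathbb{Z}^{d}}(h_{N},h_{N})$ into a continuous Dirichlet integral, and then using the variational characterization of Brownian capacity. Recall that $h_{N}(x)=h^{\eta}(x/N)$, where $h^{\eta}=h*\phi^{\eta}$ is a smooth compactly supported function on $\mathbb{R}^{d}$ and $h(z)=W_{z}[H_{K^{2\delta}}<T_{U}]$ is the equilibrium potential of $K^{2\delta}$ relative to $U$. For a fixed smooth compactly supported $\psi:\mathbb{R}^{d}\to\mathbb{R}$, a standard Riemann-sum/Taylor argument gives
\begin{equation}\label{eq:dirichletRiemann}
\lim_{N\to\infty}\frac{1}{N^{d-2}}\,\mathcal{E}_{\mathbb{Z}^{d}}\big(\psi(\tfrac{\cdot}{N}),\psi(\tfrac{\cdot}{N})\big)=\frac{1}{2d}\int_{\mathbb{R}^{d}}|\nabla\psi(z)|^{2}\,dz,
\end{equation}
since $\frac{1}{2d}(f(x')-f(x))^{2}\approx\frac{1}{2d}\frac{1}{N^{2}}(\partial_{e}\psi(x/N))^{2}$ along an edge in direction $e$, and summing over the roughly $N^{d}$ lattice points with $2d$ edge directions, suitably grouped, produces $\frac{1}{2d}\sum_{e}\int(\partial_{e}\psi)^{2}=\frac{1}{2d}\int|\nabla\psi|^{2}$. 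Applying this with $\psi=h^{\eta}$ disposes of the innermost $N\to\infty$ limit and leaves $\frac{1}{2d}\int_{\mathbb{R}^{d}}|\nabla h^{\eta}|^{2}$.

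**The $\eta\to0$ limit.** Next I would pass $\eta\to0$. Since $h$ is the equilibrium potential of the compact $K^{2\delta}$ relative to $U$, it lies in the Sobolev space $H^{1}(\mathbb{R}^{d})$ (it is harmonic in $U\setminus K^{2\delta}$, equals $1$ on $K^{2\delta}$, vanishes outside $U$, with finite Dirichlet energy equal to $\mathrm{cap}_{U}(K^{2\delta})$ up to normalization); convolution with the mollifier $\phi^{\eta}$ satisfies $h^{\eta}\to h$ in $H^{1}(\mathbb{R}^{d})$, so
\begin{equation}\label{eq:etato0}
\lim_{\eta\to0}\int_{\mathbb{R}^{d}}|\nabla h^{\eta}|^{2}\,dz=\int_{\mathbb{R}^{d}}|\nabla h|^{2}\,dz.
\end{equation}
Here one should be slightly careful: $h$ has a jump in its normal derivative across $\partial K^{2\delta}$, but no jump in value, so $h\in H^{1}$ and the mollification converges in $H^{1}$-norm; alternatively one invokes the Dirichlet-form representation of relative capacity directly. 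The standard identity from potential theory then gives $\frac{1}{2d}\int_{\mathbb{R}^{d}}|\nabla h|^{2}=\frac{1}{d}\,\mathrm{cap}_{U}(K^{2\delta})$, where $\mathrm{cap}_{U}$ denotes Brownian capacity relative to the ball $U$ (the factor $\frac{1}{d}$ matching the normalization in \eqref{1.8}, since the generator of Brownian motion here is $\frac{1}{2d}\Delta$-normalized to agree with the scaling limit of the simple random walk, so that $\mathrm{cap}_{\mathbb{R}^{d}}$ uses $c_{0}=\bar c_{0}/d$).

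**The $r_{U}\to\infty$ and $\delta\to0$ limits.** Finally I would let $r_{U}\to\infty$ and then $\delta\to0$. As the confining ball $U=B_{\mathbb{R}^{d}}(0,r_{U})$ expands to all of $\mathbb{R}^{d}$, the relative capacity $\mathrm{cap}_{U}(K^{2\delta})$ decreases to the full-space Brownian capacity $\mathrm{cap}_{\mathbb{R}^{d}}(K^{2\delta})$; this is a classical monotone-convergence fact for capacities (the equilibrium potentials increase pointwise to the Green-potential equilibrium measure, and the energies converge). Then as $\delta\to0$, the shrinking closed neighborhoods $K^{2\delta}$ decrease to $K=\bigcap_{\delta>0}K^{2\delta}$, and by the continuity-from-above property of Newtonian capacity for a decreasing sequence of compacts, $\mathrm{cap}_{\mathbb{R}^{d}}(K^{2\delta})\downarrow\mathrm{cap}_{\mathbb{R}^{d}}(K)$. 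Combining the four limits yields $\frac{1}{d}\mathrm{cap}_{\mathbb{R}^{d}}(K)$, which is \eqref{eq:limsup}.

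**Main obstacle.** The routine parts are the Riemann-sum estimate \eqref{eq:dirichletRiemann} and the two outer capacity monotonicity steps. The step requiring the most care is \eqref{eq:etato0} together with the precise bookkeeping of constants linking the discrete Dirichlet form $\mathcal{E}_{\mathbb{Z}^{d}}$, the continuous Dirichlet integral $\frac{1}{2d}\int|\nabla\cdot|^{2}$, and the normalization of Brownian capacity built into $c_{0}=\bar c_{0}/d$ in \eqref{1.8}: one must check that the scaling limit of the $\frac{1}{2d}$-weighted nearest-neighbor form is exactly the generator whose Green function is $G(y)=c_{0}|y|^{2-d}$, so that the relative capacity appearing in the limit is genuinely $\mathrm{cap}_{\mathbb{R}^{d}}$ and not off by a dimensional factor. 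Handling the mollification uniformly in $\delta$ (so that the $\eta\to0$ limit is legitimate before sending $r_{U}\to\infty,\ \delta\to0$) is where I would spend the bulk of the effort.
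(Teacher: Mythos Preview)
Your proposal is correct and follows essentially the same four-step route as the paper: a Riemann-sum/Taylor argument for the $N\to\infty$ limit yielding $\frac{1}{2d}\int|\nabla h^{\eta}|^{2}$, then $h^{\eta}\to h$ in $H^{1}(\mathbb{R}^{d})$ to obtain the relative capacity $\mathrm{cap}_{\mathbb{R}^{d},U}(K^{2\delta})$, then $r_{U}\to\infty$ and $\delta\to0$ via standard monotonicity/continuity of capacity. The paper's proof is identical in structure and cites the same ingredients (Fukushima--Oshima--Takeda for $h\in H^{1}$ and the Dirichlet-form identity, Port--Stone for the outer continuity), so there is nothing to add.
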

\begin{proof}
First, by the definition of $h_{N}$ and (\ref{eq:Dirichletdef}) we have
\begin{align}
\frac{1}{N^{d-2}}\mathcal{E}_{\mathbb{Z}^{d}}(h_{N},h_{N}) = &\; \frac{1}{N^{d-2}}\sum_{x\in\mathbb{Z}^{d}}\sum_{|e|=1}\frac{1}{4d}(h_{N}(x+e)-h_{N}(x))^{2}\nonumber
\\ \label{2.21}
\underset{(\ref{2.3})}{\stackrel{(\ref{eq:fconstant})}{=}} &\;\frac{1}{4dN^{d}}\sum_{x\in\widetilde{U}_{N}}\sum_{|e|=1}N^2 \Big(h^{\eta}\Big(\frac{x+e}{N}\Big)-h^{\eta}\Big(\frac{x}{N}\Big)\Big)^{2}.
\end{align}

\n
Then, we take the limit of both sides. By the smoothness of $h^\eta$ and a Riemann sum argument we have:
\begin{equation}\label{2.22}
\lim_{N\to\infty}\frac{1}{N^{d-2}}\mathcal{E}_{\mathbb{Z}^{d}}(h_{N},h_{N})=\frac{1}{2d}\int|\nabla h^{\eta}(y)|^{2}dy=\frac{1}{d}\mathcal{E}_{\mathbb{R}^{d}}(h^{\eta},h^{\eta}),
\end{equation}

\n
where $\mathcal{E}_{\mathbb{R}^{d}}(\cdot,\cdot)$ denotes the usual Dirichlet form on $\IR^d$.

\medskip
Since $h$ in (\ref{eq:hdef}) belongs to $H^{1}(\mathbb{R}^{d}$), see Theorem 4.3.3, p.~152 of \cite{FukuOshiTake11} (due to the killing outside of $U$, the extended Dirichlet space is contained in $H^{1}(\mathbb{R}^{d})$), $h^{\eta}\to h$ in
$H^{1}(\mathbb{R}^{d}$), as $\eta\to0$. We thus find that	
\begin{equation}\label{2.23}
\lim_{\eta\to0}\mathcal{E}_{\mathbb{R}^{d}}(h^{\eta},h^{\eta})=\mathcal{E}_{\mathbb{R}^{d}}(h,h)=\mathrm{cap}_{\mathbb{R}^{d},U}(K^{2\delta}),
\end{equation}
where $\mathrm{cap}_{\mathbb{R}^{d},U}(K^{2\delta})$
is the relative capacity of $K^{2\delta}$ with respect to $U$,
and the last equality follows from \cite{FukuOshiTake11}, pp.~152 and 71.

\medskip
Letting $r_{U} \to\infty$ , the relative capacity converges to the usual Brownian capacity
(this follows for instance from the variational characterization of the capacity in Theorem 2.1.5 on pp.~70 and 71 of \cite{FukuOshiTake11}):
\begin{equation}\label{2.24}
\mathrm{cap}_{\mathbb{R}^{d},U}(K^{2\delta})\to\mathrm{cap}_{\mathbb{R}^{d}}(K^{2\delta}),  \;\mbox{as $r_U \rightarrow \infty$}.
\end{equation}

\n
Then, letting $\delta\to0$, by Proposition 1.13, p.~60 of \cite{PortSton78}, we find that
\begin{equation}\label{2.25}
\mathrm{cap}_{\mathbb{R}^{d}}(K^{2\delta})\to\mathrm{cap}_{\mathbb{R}^{d}}(K), \;\mbox{as $\delta \rightarrow 0$}.
\end{equation}
The claim (\ref{eq:limsup}) follows.
\end{proof}

\medskip\n

\begin{remark}\label{rem2.5} \rm

Our main objective in the next two sections is to prove (\ref{0.4}), i.e. $\widetilde{\mathbb{P}}_{N}[A_{N}]\to1$. Actually, we could also use the above $\widetilde{\mathbb{P}}_{N}$ (with $a>u$ in place of $u_{**}$ in the definition of $f$ in (\ref{eq:fdef})) and the change of probability method to provide an alternative proof of Theorem 6.4 of \cite{LiSzni13} (it derives the asymptotic lower bound for the probability that the regularized occupation-time profile of random interlacements insulates $K$ by values exceeding $a$). It is a remarkable feature that such a bulge of the occupation-time profile is constructed in the tilted interlacements by mostly steering the tilted walk towards $K_N$, and not by seriously tinkering the jump rates, see for instance (\ref{eq:generatordef}), as well as Propositions \ref{prop:Capcompare} and \ref{prop:mesdom} in the next section. \hfill $\square$
\end{remark}

\section{Domination of equilibrium measures}

In this section, our main goal is Proposition \ref{prop:mesdom}, where
we prove that on a mesoscopic box inside $K_{N}^{\delta}$, the tilted
equilibrium measure dominates $(u_{**}+\epsilon/4)/u$ times the corresponding standard equilibrium measure. It is the key ingredient for constructing
the coupling in Proposition \ref{prop:coupling} in the next section.
A major step is achieved in Proposition \ref{prop:Capcompare},
where we prove that the tilted capacity of a mesoscopic ball (larger than the above mentioned box) inside $K_{N}^{\delta}$ is at least $(u_{**}+\epsilon/2)/u$ times its corresponding standard capacity.

\medskip
We start with the precise definition of the objects of interest in
this and the next section. We denote by $\Gamma^{N}=\partial K_{N}^{\delta/2}$ the boundary in $\IZ^d$ of the discrete blow-up of $K^{\frac{\delta}{2}}$ (we
recall (\ref{eq:boundarydef}) and (\ref{eq:blowupdef-1}) for the definitions of the boundary
and of the discrete blow-up). The above $\Gamma^{N}$ will serve as a set ``surrounding'' $K_{N}$.
We fix numbers $r_{i}$, $i=1,\ldots,4$ such that
\begin{equation}\label{eq:r1234choice}
0<2r_{1}<r_{2}<r_{3}<r_{4}<1
\end{equation}

\n
We define for $x$ in $\Gamma^{N}$ two boxes centered at $x$ (when
there is ambiguity we add a superscript for its center $x$, and $B_2$ will only be used in Section 4):
\begin{equation}\label{eq:boxdef}
B_{1}=B_{\infty}(x,N^{r_{1}}),\qquad B_{2}=B_{\infty}(x,N^{r_{2}});
\end{equation}
and three balls also centered at $x$:
\begin{equation}\label{3.3}
B_{3}=B(x,N^{r_{3}}),\quad B_{4}=B(x,N^{r_{4}}),\quad B_{5}=B(x,2N^{r_{4}}),
\end{equation}
so that (in the notation of (\ref{eq:boundarydef})) one has
\begin{equation}\label{3.4}
B_{1}\subset B_{2}\subset B_{3}\subset B_{4}\subset B_{5}\subset\overline{B_{5}}\subseteq K_{N}^{\delta}\subset\subset\mathbb{Z}^{d}.
\end{equation}

\n
(we now tacitly assume that $N$ is sufficiently large so that for all
$x\in\Gamma^{N}$, $\overline{B_{5}^{x}}\subset K_{N}^{\delta}$, and the second equality of (\ref{eq:fconstant}) holds).

\medskip
We start with the domination of capacities. To prove the next
Proposition \ref{prop:Capcompare}, we calculate the time spent by the random
walk in the mesoscopic body $B_{3}$ in two different ways (see
Lemma \ref{lem:MEAN}), and relate these expressions to the
capacity and to the tilted capacity.
\begin{prop}\label{prop:Capcompare}When $N$ is large, we have for all $x\in\Gamma^N$
\begin{equation}\label{eq:Capcompare}
u\mathrm{\widetilde{c}ap}(B_{3})\geq\Big(u_{**}+\frac{\epsilon}{2}\Big)\mathrm{cap}(B_{3}).
\end{equation}
\end{prop}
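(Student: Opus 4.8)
The plan is to compare the occupation time of $B_3$ by the standard walk and by the tilted walk, both started on (the equilibrium measure of) $B_3$, and to squeeze out the capacities from these two computations. Concretely, I would show in an auxiliary lemma (the ``$\mathrm{MEAN}$'' lemma alluded to in the text) that for all $x\in\Gamma^N$ and $N$ large,
\begin{equation*}
E_{e_{B_3}}\Big[\int_0^\infty 1_{B_3}(X_s)\,ds\Big] = (1+o(1))\,c_1\,N^{2r_3}\,\mathrm{cap}(B_3),
\end{equation*}
and the analogous identity with tildes,
\begin{equation*}
\widetilde{E}_{\widetilde{e}_{B_3}}\Big[\int_0^\infty 1_{B_3}(X_s)\,ds\Big] = (1+o(1))\,\widetilde{c}_1\,N^{2r_3}\,\mathrm{\widetilde{c}ap}(B_3),
\end{equation*}
where $o(1)\to 0$ as $N\to\infty$ uniformly in $x\in\Gamma^N$, and $\widetilde{c}_1$ is a tilted analogue of the constant $c_1$ from Lemma~\ref{lem:MEA2}. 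The first identity follows by decomposing $\int_0^\infty 1_{B_3}(X_s)ds$ according to the (a.s.\ finite, by transience) last visit to $B_3$, using the sweeping identity \eqref{eq:sweeping} and the fact that $e_{B_3}$ is the last-exit distribution from $B_3$, and then invoking Lemma~\ref{lem:MEA2} applied to the ball $B_3 = B(x,N^{r_3})$ (after translating its center to the origin); the second is proved in exactly the same way, using the tilted potential theory recalled after Corollary~\ref{cor1.3}, in particular \eqref{eq:sweeping-1} and \eqref{eq:eKg1tilted}, together with a tilted version of Lemma~\ref{lem:MEA2}.

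The next step is to compare the two occupation-time expectations directly, \emph{before} extracting the capacities. The key point is that $B_3$ lies inside $K_N^\delta$, where by \eqref{eq:fconstant} one has $f\equiv\sqrt{(u_{**}+\epsilon)/u}$, hence $V = -\Delta_{dis}f/f \equiv 0$ on a neighborhood of $B_3$, and therefore, by Corollary~\ref{cor1.3}, the tilted walk started inside $B_3$ moves exactly like a simple random walk as long as it stays in the region where $f$ is constant — only its jump rate and its behavior near $\partial\widetilde U_N$ differ, but those are invisible while the walk is deep inside $K_N^\delta$. The difference between $\widetilde{P}_x$ and $P_x$ on paths confined to the region $\{f\equiv\mathrm{const}\}$ is only a deterministic time-change of the clock (constant jump rate $1$ versus the constant rate coming from $\sum_{|e|=1}\frac{1}{2d}f(x+e)/f(x)$); since here $f(x+e)/f(x)=1$ there, the discrete skeletons literally agree, and the reversible measure is $\widetilde\lambda = f^2 = \frac{u_{**}+\epsilon}{u}$ times the counting measure on $B_3^{\delta'}$ for some $\delta'>0$. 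Tracking this through \eqref{eq:tiltedgdef}, \eqref{eq:tiltedemdef}, \eqref{eq:tiltedgUdef}-type relations, I expect to get $\widetilde{c}_1 = c_1$ and, crucially,
\begin{equation*}
\widetilde{E}_{\widetilde{e}_{B_3}}\Big[\int_0^\infty 1_{B_3}(X_s)\,ds\Big] \;\geq\; \Big(\frac{u_{**}+\epsilon}{u}\Big)(1 - o(1))\, E_{e_{B_3}}\Big[\int_0^\infty 1_{B_3}(X_s)\,ds\Big],
\end{equation*}
because the tilted walk, when it does sit inside $B_3$, accumulates occupation time at the same rate, while the equilibrium measure/capacity normalization carries the factor $f^2 = (u_{**}+\epsilon)/u$. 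Combining this chain of (in)equalities with the two asymptotic identities above gives $u\,\mathrm{\widetilde{c}ap}(B_3)\geq (u_{**}+\epsilon)(1-o(1))\,\mathrm{cap}(B_3) \geq (u_{**}+\tfrac{\epsilon}{2})\,\mathrm{cap}(B_3)$ for $N$ large, which is \eqref{eq:Capcompare}.

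The main obstacle, and the step requiring the most care, is the \emph{uniformity in $x\in\Gamma^N$} of the error terms, and the honest bookkeeping of the boundary effects: the tilted walk does not behave like SRW globally (it behaves like SRW with variable rate near $\partial\widetilde U_N$, and is steered by the non-constancy of $f$ in the shell $\widetilde U_N\setminus K_N^\delta$), so one must confirm that excursions of the walk started in $B_3$ out to $\partial K_N^\delta$ and back contribute negligibly to the occupation time of $B_3$ relative to the main term of order $N^{2r_3}\mathrm{cap}(B_3)\sim N^{2r_3}\cdot N^{(d-2)r_3}$. This is where the separation of scales $r_3 < r_4 < 1$ in \eqref{eq:r1234choice} and $\overline{B_5}\subset K_N^\delta$ is used: a walk started in $B_3$ must cross the annulus $B_5\setminus B_3$ (of width $\sim N^{r_4}\gg N^{r_3}$, entirely within the region $f\equiv\mathrm{const}$) before it can feel any tilting, and standard Green-function estimates \eqref{eq:gUestimate} bound the probability of such a long excursion and the extra occupation time it could bring. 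I would also need a lower bound $\mathrm{\widetilde{c}ap}(B_3)\leq$ (something), i.e.\ the reverse inequality to close the asymptotics in the tilted identity, which follows symmetrically. Everything else — the sweeping-identity decomposition of occupation time, the Riemann-sum asymptotics — is routine given Lemma~\ref{lem:MEA2} and its tilted counterpart.
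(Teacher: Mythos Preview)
Your approach is essentially the paper's, but two points sharpen it considerably. First, your ``Step~2'' comparison of occupation times is not an asymptotic inequality but an \emph{exact identity}: using \eqref{eq:eKg1tilted} one has $\widetilde{E}_{\widetilde{e}_{B_3}}\big[\int_0^\infty 1_{B_3}(X_s)\,ds\big] = \sum_{y\in B_3}\widetilde\lambda(y) = \tfrac{u_{**}+\epsilon}{u}\,|B_3|$, and similarly $E_{e_{B_3}}\big[\int_0^\infty 1_{B_3}\big] = |B_3|$; no path-level reasoning about ``same rate inside $B_3$'' is needed. Second, because of this exact identity you only need one-sided bounds in Step~1 --- a lower bound on the standard side (immediate from Lemma~\ref{lem:MEA2}) and an \emph{upper} bound $\max_{y\in\partial_i B_3}\widetilde E_y\big[\int_0^\infty 1_{B_3}\big]\le (1+o(1))\,c_1 N^{2r_3}$ on the tilted side. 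The latter is the genuine work: your proposed ``tilted Lemma~\ref{lem:MEA2}'' reduces, via the strong Markov property at $T_{B_4}$ and the fact that the tilted walk equals simple random walk up to $T_{B_4}$, to showing that $\max_{v\in\partial B_4}\widetilde P_v[H_{B_3}<\infty]\to 0$. This is the paper's Lemma~\ref{lem:pmax}, and it is not purely a local (inside $K_N^\delta$) statement --- one must show the tilted walk escapes from $\partial B_5$ with uniformly positive probability, which requires controlling $d\widetilde P_y/dP_y$ on a good event via the bound $|V|\le c/N^2$ and the time spent in $\widetilde U_N$; this is exactly the obstacle you flag, and it should be promoted from ``something to check'' to the explicit second lemma.
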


The proof of this proposition relies on Lemmas \ref{lem:MEAN} and
\ref{lem:pmax}.
\begin{lem}\label{lem:MEAN}
\begin{equation}\label{eq:MEAN}
\widetilde{E}_{\widetilde{e}_{B_{3}}}\Big[\dis\int_{0}^{\infty}1_{B_{3}}(X_{s})ds\Big]=\frac{u_{**}+\epsilon}{u}\, E_{e_{B_3}}\Big[\int_{0}^{\infty}1_{B_{3}}(X_{s})ds\Big]
\end{equation}
\end{lem}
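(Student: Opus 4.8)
The plan is to compute the quantity $\widetilde{E}_{\widetilde{e}_{B_3}}\big[\int_0^\infty 1_{B_3}(X_s)\,ds\big]$ in two ways and match them. On one hand, expanding the occupation time via the tilted Green function as in \eqref{eq:tiltedgdef} gives
\begin{equation}\label{eq:planA}
\widetilde{E}_{\widetilde{e}_{B_3}}\Big[\int_0^\infty 1_{B_3}(X_s)\,ds\Big]=\sum_{x\in B_3}\sum_{y\in B_3}\widetilde{e}_{B_3}(x)\,\widetilde{g}(x,y)\,\widetilde{\lambda}(y),
\end{equation}
and then the identity \eqref{eq:eKg1tilted}, $\sum_{y\in B_3}\widetilde{g}(x,y)\widetilde{e}_{B_3}(y)=1$ for $x\in B_3$, is not directly in the right form, so instead I would use symmetry of $\widetilde g$ together with \eqref{eq:eKg1tilted} after reorganizing: swapping the roles, $\sum_{x\in B_3}\widetilde{e}_{B_3}(x)\widetilde g(x,y)=1$ for $y\in B_3$, hence the right-hand side of \eqref{eq:planA} collapses to $\sum_{y\in B_3}\widetilde{\lambda}(y)$. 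The analogous (untilted) computation with $\widetilde{\lambda}\equiv 1$ gives $E_{e_{B_3}}\big[\int_0^\infty 1_{B_3}(X_s)\,ds\big]=\sum_{y\in B_3}\mathrm{counting\ measure}=|B_3|$, using \eqref{eq:eKg1} in the same way (recall the untilted walk has jump rate $1$, so its Green function is the one in \eqref{1.5} and $\sum_{x\in B_3} e_{B_3}(x)g(x,y)=1$ for $y\in B_3$ by symmetry of $g$ and \eqref{eq:eKg1}).

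Thus both sides of \eqref{eq:MEAN} reduce to sums of the respective reversing measures over $B_3$, and the lemma amounts to the pointwise identity $\widetilde{\lambda}(y)=\frac{u_{**}+\epsilon}{u}$ for all $y\in B_3$. This is exactly where the choice of $f$ pays off: by \eqref{eq:lambdadef}, $\widetilde{\lambda}(y)=f^2(y)$, and by the second part of \eqref{eq:fconstant}, for $N$ large $f=\sqrt{\frac{u_{**}+\epsilon}{u}}$ on $K_N^{\delta}$, while by \eqref{3.4} we have $B_3\subset\overline{B_5}\subseteq K_N^\delta$. Hence $f^2(y)=\frac{u_{**}+\epsilon}{u}$ on all of $B_3$, and \eqref{eq:planA} equals $\frac{u_{**}+\epsilon}{u}\,|B_3|=\frac{u_{**}+\epsilon}{u}\,E_{e_{B_3}}\big[\int_0^\infty 1_{B_3}(X_s)\,ds\big]$, which is \eqref{eq:MEAN}.

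One subtlety I should double-check carefully is the variable jump rate of $X$ under $\widetilde{P}_x$ (noted after \eqref{eq:generatordef}): the continuous-time tilted occupation time is $\widetilde{g}_U(x,y)\widetilde{\lambda}(y)$ with $\widetilde g$ the Green density with respect to $\widetilde\lambda$, so the factor $\widetilde\lambda(y)$ in \eqref{eq:planA} is genuinely there and must not be dropped; the discrete-skeleton interpretation given below \eqref{eq:tiltedemdef} is what makes \eqref{eq:eKg1tilted} hold in the stated normalization. The only real ``obstacle'' is thus bookkeeping: making sure the reversing-measure normalizations in \eqref{eq:tiltedgdef}, \eqref{eq:lambdadef}, \eqref{eq:tiltedemdef}, \eqref{eq:eKg1tilted} are used consistently so that the collapse of the double sum to $\sum_{y\in B_3}\widetilde\lambda(y)$ is legitimate. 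Everything else — the constancy of $f$ on $B_3$ and the reduction of the untilted side to $|B_3|$ — is immediate from \eqref{eq:fconstant}, \eqref{3.4}, \eqref{eq:eKg1}, and \eqref{eq:lambdadef}.
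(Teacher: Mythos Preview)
Your proof is correct and follows essentially the same route as the paper: both compute the tilted expectation as $\sum_{y\in B_3}\widetilde\lambda(y)$ via \eqref{eq:tiltedgdef} and \eqref{eq:eKg1tilted} (together with the symmetry of $\widetilde g$, which you make explicit), identify $\widetilde\lambda\equiv\frac{u_{**}+\epsilon}{u}$ on $B_3$ from \eqref{eq:fconstant} and \eqref{3.4}, and compare with the untilted computation $E_{e_{B_3}}[\int_0^\infty 1_{B_3}(X_s)ds]=|B_3|$ obtained the same way from \eqref{eq:eKg1}. Your added remarks about the variable jump rate and the normalizations are accurate and in fact clarify a point the paper leaves implicit.
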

\begin{proof}
By the definition of the tilted Green function (see (\ref{eq:tiltedgdef}))
and by (\ref{eq:eKg1tilted}),
\begin{equation}\label{3.7}
\begin{array}{lcl}
\widetilde{E}_{\widetilde{e}_{B_3}}\Big[\int_{0}^{\infty}1_{B_{3}}(X_{s})ds\Big] & \!\!\!\!= & \!\!\!\!\sum_{v\in\partial_{i}B_3,\: y\in B_{3}}\widetilde{e}_{B_3}(v)\widetilde{g}(v,y)\widetilde{\lambda}(y)\\
 & \!\!\!\! \overset{(\ref{eq:eKg1tilted})}{=} & \!\!\!\! \sum_{y\in B_{3}}1_{B_{3}}(y)\widetilde{\lambda}(y).
\end{array}
\end{equation}
Moreover, $\widetilde{\lambda}(y)=f^{2}(y)=\frac{u_{**}+\epsilon}{u}$
for $y\in B_{3}\subset K_{N}^{\delta}$ (see (\ref{eq:lambdadef}), (\ref{eq:fconstant}), (\ref{3.4})). Hence,
\begin{equation}\label{3.8}
\widetilde{E}_{\widetilde{e}_{B_3}}\Big[\int_{0}^{\infty}1_{B_{3}}(X_{s})ds\Big]=\frac{u_{**}+\epsilon}{u}|B_{3}|.
\end{equation}
By a similar calculation, we also find that
\begin{equation}\label{3.9}
\mathbb{E}_{e_{B_3}}\Big[\int_{0}^{\infty}1_{B_{3}}(X_{s})ds\Big]=|B_{3}|.
\end{equation}

\n
Comparing  (\ref{3.8}) and (\ref{3.9}) , we obtain (\ref{eq:MEAN}) as desired.\end{proof}

In the second lemma we prove that starting from the boundary of $B_{4}$,
the tilted walk hits $B_{3}$ with a probability tending to 0 with $N$.
\begin{lem}
\label{lem:pmax}
\begin{equation}\label{eq:pmax}
\beta(N)\overset{\mathrm{def}}{=}\max_{x\in\Gamma^N,v\in\partial B_{4}}\widetilde{P}_{v}(H_{B_{3}}<\infty)\textrm{ tends to 0 as }N\to\infty.
\end{equation}
\end{lem}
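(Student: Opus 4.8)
The plan is to estimate the hitting probability $\widetilde{P}_v(H_{B_3}<\infty)$ for $v\in\partial B_4$ by comparing the tilted walk to the simple random walk, using the fact that the tilt is very mild on the mesoscopic scales in play. The starting point is the formula $\widetilde{P}_v(H_{B_3}<\infty)=\sum_{y\in B_3}\widetilde g(v,y)\,\widetilde e_{B_3}(y)$, the tilted analogue of \eqref{1.15}. By \eqref{eq:eKg1tilted} one has $\sum_{y\in B_3}\widetilde g(z,y)\widetilde e_{B_3}(y)=1$ for $z\in B_3$, so the issue is to control the ratio $\widetilde g(v,y)/\widetilde g(z,y)$ for $v\in\partial B_4$, $z\in B_3$, $y\in B_3$, i.e. to show it is uniformly $o(1)$ as $N\to\infty$. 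Since $B_3,B_4,B_5$ are all contained in $K_N^\delta$, where $f\equiv\sqrt{(u_{**}+\epsilon)/u}$ is constant by \eqref{eq:fconstant}, the tilted walk behaves \emph{exactly} like the simple random walk inside $B_5$ (only the overall time scale changes, which does not affect the discrete skeleton); this is precisely the remark recorded after \eqref{eq:generatordef} that $\widetilde g(\cdot,\cdot)$ is the Green density of a walk among conductances $\frac{1}{2d}f(x)f(y)$, which reduces to the ordinary walk when $f$ is constant on the relevant region.

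Concretely, I would first kill the tilted walk upon exiting $B_5$: writing $\widetilde g_{B_5}$ for the killed tilted Green function, one has $\widetilde P_v(H_{B_3}<\infty)\le \widetilde P_v(H_{B_3}<T_{B_5}) + \widetilde P_v(T_{B_5}<H_{B_3},\,H_{B_3}<\infty)$, and iterating the strong Markov property at successive exits from $B_5$ / returns towards $B_3$ one reduces everything to a geometric series in a single ``one-excursion'' probability. Inside $B_5$ the killed tilted walk is literally the killed simple random walk (up to time change), so $\widetilde g_{B_5}(z,y)=g_{B_5^{\mathrm{eff}}}(z,y)$ for $z,y$ in the interior, where $B_5^{\mathrm{eff}}$ is the Euclidean ball of radius $2N^{r_4}$. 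Then the classical Green-function estimate \eqref{eq:greenasym}--\eqref{eq:gUestimate} gives, for $v\in\partial B_4$ (at distance $\sim N^{r_4}$ from the center), $y\in B_3$ (at distance $\le N^{r_3}$), and $z\in\partial_i B_3$, a bound of the form $\widetilde g_{B_5}(v,y)\le c\,N^{r_4(2-d)}$ while $\widetilde g_{B_5}(z,y)$ summed against $\widetilde e_{B_3}$ is $\ge c'$ on a macroscopic portion of $B_3$; dividing, one gets that the one-excursion hitting probability is at most $c\,(N^{r_3}/N^{r_4})^{d-2}=c\,N^{-(r_4-r_3)(d-2)}\to 0$ since $r_3<r_4$ and $d\ge 3$. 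The escape probability from $B_4$ to $\partial B_5$ before returning to $B_3$ is bounded below by a constant (again by the simple random walk estimate, since we are inside $B_5$), so the geometric series converges and $\beta(N)\le c\,N^{-(r_4-r_3)(d-2)}$, uniformly in $x\in\Gamma^N$ and $v\in\partial B_4$.

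The main technical obstacle is not the probabilistic argument but making the ``tilted walk $=$ simple random walk inside $B_5$'' identification fully rigorous at the level of Green functions and hitting distributions: one must check that the time change relating $\widetilde P_x$ to $P_x$ on paths confined to $B_5$ does not disturb hitting probabilities (it does not, since hitting a set is a property of the discrete skeleton), and that the killing at $T_{B_5}$ interacts correctly with this — i.e. that $\widetilde g_{B_5}$ restricted to $B_5$ genuinely coincides with the ordinary killed Green function of the walk confined to the Euclidean ball of radius $2N^{r_4}$, up to the multiplicative reversibility-measure factor $\widetilde\lambda\equiv(u_{**}+\epsilon)/u$, which cancels in the ratio. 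Once that bookkeeping is done, the estimate is a routine application of Lemma~\ref{lem:MEA2}-style Green-function asymptotics, and the uniformity over $\Gamma^N$ is automatic because all the relevant geometry (the radii $N^{r_1},\dots,2N^{r_4}$) is the same for every center $x$.
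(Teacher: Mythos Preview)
Your decomposition into a one-excursion hitting probability inside $B_5$ plus a geometric tail is exactly the scheme the paper uses, and the bound $\widetilde P_v(H_{B_3}<T_{B_5})\le cN^{-(r_4-r_3)(d-2)}$ is correct for the reason you give: on $\overline{B_5}\subset K_N^\delta$ the function $f$ is constant, so the tilted walk stopped at $T_{B_5}$ is the stopped simple random walk. But the argument breaks at the step that makes the geometric series converge. What you need is that, once the walk has exited $B_5$, it \emph{fails} to return to $\overline{B_4}$ with probability bounded away from zero uniformly in $N$, i.e.\ $\liminf_N\min_{y\in\partial B_5}\widetilde P_y(H_{\overline{B_4}}=\infty)>0$. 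Your sentence ``the escape probability from $B_4$ to $\partial B_5$ before returning to $B_3$ is bounded below by a constant (again by the simple random walk estimate, since we are inside $B_5$)'' is the wrong quantity: that is just $1-\beta_0(N)$, which is trivially close to $1$ and does not close the loop. The relevant return probability is $\widetilde P_y(H_{\overline{B_4}}<\infty)$ for $y\in\partial B_5$, and this is \emph{not} an inside-$B_5$ statement: from $\partial B_5$ the walk will typically leave $K_N^\delta$, where $f$ is no longer constant and the tilt genuinely biases the walk back towards $K_N$. Nothing in your proposal controls this.

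The paper handles precisely this point by a change-of-measure estimate: one observes that $|V|=|\Delta_{dis}f/f|\le c(h^\eta,u)N^{-2}$ and that the simple random walk spends at most $O(N^2)$ expected time in $\widetilde U_N$, so on a good event $I_N$ (of $P_y$-probability bounded below) the Radon--Nikodym density $d\widetilde P_y/dP_y=f(X_0)^{-1}\exp\{\int_0^\infty V(X_s)\,ds\}$ is bounded below by a positive constant independent of $N$. Combined with the elementary SRW bound $P_y(H_{\overline{B_4}}=\infty)>c$ for $y\in\partial B_5$, this yields $\widetilde P_y(H_{\overline{B_4}}=\infty)\ge c'>0$, which is the missing ingredient. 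Without some version of this step (or, alternatively, a direct appeal to two-sided Green function bounds for uniformly elliptic conductances, which you do not invoke), the geometric series does not close and the proof is incomplete.
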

\begin{proof}
For $v$ in $\partial B_{4}$, we have
\begin{equation}\label{eq:est0}
\widetilde{P}_{v}(H_{B_{3}}<\infty)=\widetilde{P}_{v}(H_{B_{3}}<T_{B_{5}})+\widetilde{P}_{v}(T_{B_{5}}<H_{B_{3}}<\infty),
\end{equation}

\n
By the second equality of (\ref{eq:fconstant}), and in view of (\ref{eq:generatordef}), (\ref{3.4}),
when starting in $v\in B_{4}$, under $\widetilde{P}_{v}$, $X_{\cdot\wedge T_{B_{5}}}$
behaves as stopped simple random walk. Thus, by classical simple random
walk estimates, we have an upper bound for the
probability that the tilted walk hits $B_{3}$ before exiting $B_{5}$:
\begin{equation}\label{eq:est1}
\max_{v\in\partial B_{4}}\widetilde{P}_{v}(H_{B_{3}}<T_{B_{5}})\leq\max_{v\in\partial B_{4}}P_{v}(H_{B_{3}}<\infty)\overset{\mathrm{def}}{=}\beta_{0}(N)=O(N^{(r_{3}-r_{4})(d-2)}).
\end{equation}
(note that $\beta_0 (N)$ does not depend on $x\in \Gamma^N$).

\medskip
By the strong Markov property successively applied  at times $T_{B_{5}}$
and $H_{\overline{B_{4}}}$, we have:
\begin{equation}\label{eq:est2}
\widetilde{P}_{v}(T_{B_{5}}<H_{B_{3}}<\infty)\leq\max_{y\in\partial B_{5}}\widetilde{P}_{y}(H_{\overline{B_{4}}}<\infty)\max_{v'\in\partial B_{4}}\widetilde{P}_{v'}(H_{B_{3}}<\infty).
\end{equation}

\n
Taking the maximum over $v$ in $\partial B_{4}$ on the left-hand
side of (\ref{eq:est2}), and inserting this bound in (\ref{eq:est0}),
we find with the help of (\ref{eq:est1}):
\begin{equation}\label{3.14}
\max_{v\in\partial B_{4}}\widetilde{P}_{v}(H_{B_{3}}<\infty)\leq\frac{\beta_{0}(N)}{1-\max\limits_{y\in\partial B_{5}}\widetilde{P}_{y}(H_{\overline{B_{4}}}<\infty)}.
\end{equation}

\n
To prove (\ref{eq:pmax}), it now suffices to show that
\begin{equation}\label{eq:leaveforever}
\liminf_{N}\min_{x\in\Gamma^N,y\in\partial B_{5}}\widetilde{P}_{y}(H_{\overline{B_{4}}}=\infty)>0.
\end{equation}

\n
As a result of (\ref{eq:greenasym}) and the stopping theorem, for
large $N$, and any $x\in\Gamma^N$,
\begin{equation}\label{eq:leaveproba}
\min_{y\in\partial B_{5}}P_{y}(H_{\overline{B_{4}}}=\infty)>c.
\end{equation}

\n
By a similar argument as in Lemma \ref{lem:MEA2},
\begin{equation}\label{eq:timecontrol}
E_{z}\Big[\int_{0}^{\infty}1_{\widetilde{U}_N}(X_{s})ds\Big]\leq c(\widetilde{U})N^{2}\textrm{, for }z\in\mathbb{Z}^{d}\textrm{ and }N\geq1.
\end{equation}

\n
By the Chebyshev Inequality, writing $\widetilde{c}(\widetilde{U})=2c(\widetilde{U})/c$, with $c$ as in (\ref{eq:leaveproba}), and $I_{N}=\{\int_{0}^{\infty}1_{\widetilde{U}_N}(X_{s})ds\leq\widetilde{c}(\widetilde{U})N^{2}\}$,
we have
\begin{equation}\label{eq:staytime}
P_{z}[I_{N}]\geq1-\frac{c}{2}\textrm{, for all }z\in\mathbb{Z}^{d}.
\end{equation}

\n
With (\ref{eq:leaveproba}) and (\ref{eq:staytime}) put together,
we obtain that for all $z$ in $\partial B_{5}$,
\begin{equation}\label{eq:newleaveproba}
P_{z}(\{H_{\overline{B_{4}}}=\infty\}\cap I_{N})\ge\frac{c}{2}.
\end{equation}

\n
By definition of $f$ (see (\ref{eq:fdef})) and since $h^{\eta}\in C_{0}^{\infty}$,
we see that
\begin{equation}\label{eq:Uupperbound}
|V|=\Big|\frac{\Delta_{dis}f}{f}\Big|\leq c(u) \Big|\Delta_{dis}h_{N}\Big|\leq\frac{\bar{c}(h^{\eta},u)}{N^{2}}.
\end{equation}

\n
By the first equality of (\ref{eq:fconstant}), we have $\Delta_{dis}f=0$ outside $\widetilde{U}_N$. Hence, we find that for large $N$, for all $x \in \Gamma^N$ and $y \in \partial B_5$, on the event $I_{N}$,
\begin{equation}\label{eq:tiltedlowerboudn}
\frac{d\widetilde{P}_{y}}{dP_{y}} \ge c(u)\exp\Big\{\int_{0}^{\infty}V(X_{s})ds\Big\}\overset{(\ref{eq:timecontrol})}{\underset{(\ref{eq:Uupperbound})}{\geq}} c(u)\exp\Big\{-\widetilde{c}N^{2}\cdot\frac{\bar{c}}{N^{2}}\Big\}= c(u)e^{-\widetilde{c}\bar{c}}.
\end{equation}
Therefore, by (\ref{eq:newleaveproba}, (\ref{eq:tiltedlowerboudn}) we find that
\begin{equation}\label{3.22}
\begin{array}{l}
\liminf_{N\to\infty}\min_{x\in\Gamma^N,y\in\partial B_{5}}\widetilde{P}_{y}[\{H_{\overline{B_4}}\, =\infty\}]   \geq 
\\[2ex]
\liminf_{N\to\infty}\min_{x\in\Gamma^N,y\in\partial B_{5}}E_{y}\Big[\dis\frac{d\widetilde{P}_{y}}{dP_{y}}1_{\{H_{\overline{B_4}}\,=\infty\}},I_{N}\Big]>0. 
\end{array}
\end{equation}

\medskip\n
This proves (\ref{eq:leaveforever}) and concludes the proof of Lemma
\ref{lem:pmax}.
\end{proof}

\medskip
With all ingredients prepared, we are ready to prove the domination
of capacities stated in Proposition \ref{prop:Capcompare}. In the proof we combine the estimates obtained in Lemmas
\ref{lem:MEA2} and \ref{lem:MEAN}, perform an argument similar to
(\ref{eq:est0}), (\ref{eq:est1}) and (\ref{eq:est2}), and employ
Lemma \ref{lem:pmax} to control the tilted return probability.
\begin{proof}[Proof of Proposition \ref{prop:Capcompare}]
We will bound the left term of (\ref{eq:MEAN}) from above and the right term from below. We start with the upper bound on the left-hand side of (\ref{eq:MEAN}).

\medskip\n
For all $y$ in $\partial_{i}B_{3}$, by strong Markov property at
time $T_{B_{4}}$ (and then at time $H_{B_3}$) we have
\begin{equation}\label{eq:telescope}
\begin{split}
\widetilde{E}_{y}\Big[\int_{0}^{\infty} \! 1_{B_{3}}(X_{s})ds\Big]  = &\, \widetilde{E}_{y}\Big[\int_{0}^{T_{B_{4}}}\! 1_{B_{3}}(X_{s})ds\Big]+\widetilde{E}_{y}\Big[\widetilde{E}_{X_{T_{B_{4}}}}\Big[\int_{0}^{\infty} \! 1_{B_{3}}(X_{s})ds\Big]\Big]
\\[2ex]
  \leq & \;\max_{y\in\partial_{i}B_{3}}\Big\{\widetilde{E}_{y}\Big[\int_{0}^{T_{B_{4}}}1_{B_{3}}(X_{s})ds\Big]\Big\}
 \\[2ex]
  + &\; \max_{v\in\partial B_{4}}\{\widetilde{P}_{v}[H_{B_{3}}<\infty]\}\max_{y\in\partial_{i}B_{3}}\{\widetilde{E}_{y}\Big[\int_{0}^{\infty}1_{B_{3}}(X_{s})ds\Big].
\end{split}
\end{equation}

\medskip\n
Taking the maximum over $y\in\partial_{i}B_{3}$ on the left-hand
side of (\ref{eq:telescope}) and rearranging, we find in view of
(\ref{eq:pmax}):
\begin{equation}\label{eq:maxineq}
\max_{y\in\partial_{i}B_{3}}\widetilde{E}_{y}\Big[\int_{0}^{\infty}1_{B_{3}}(X_{s})ds\Big]\leq\frac{\max_{y\in\partial_{i}B_{3}}\widetilde{E}_{y}\big[\int_{0}^{T_{B_{4}}}1_{B_{3}}(X_{s})ds\big]}{1-\beta(N)}.
\end{equation}

\medskip\n
Then we notice that, since $f$ is constant on $K_{N}^{\delta}\supseteq\overline{B_{4}}$, see (\ref{eq:fconstant}) and (\ref{3.4}),
\begin{equation}\label{eq:tiltequiv}
\widetilde{E}_{y}\Big[\dis\int_{0}^{T_{B_{4}}}1_{B_{3}}(X_{s})ds\Big]=E_{y}\Big[\dis\int_{0}^{T_{B_{4}}}1_{B_{3}}(X_{s})ds\Big].
\end{equation}
 We now have the following upper bound on the left-hand side of (\ref{eq:MEAN})
under $\widetilde{P}_{\widetilde{e}_{B_{3}}}$:
\begin{equation}\label{eq:capcomp}
\begin{array}{lcl}
\widetilde{E}_{\widetilde{e}_{B_{3}}}\Big[\dis\int_{0}^{\infty}1_{B_{3}}(X_{s})ds\Big] &
\!\!\!\! \leq &\!\!\!\!  \mathrm{\widetilde{c}ap}(B_{3})\;\max_{y\in\partial_{i}B_{3}}\widetilde{E}_{y}\Big[\dis\int_{0}^{\infty}1_{B_{3}}(X_{s})ds\Big]
\\[4ex]
 & \!\!\!\! \overset{(\ref{eq:maxineq})}{\leq} &\!\!\!\!  \mathrm{\widetilde{c}ap}(B_{3})\; \dis\frac{\max_{y\in\partial_{i}B_{3}}\big\{\widetilde{E}_{y}\big[\int_{0}^{T_{B_{4}}}1_{B_{3}}(X_{s})ds\big]\big\}}{1-\beta(N)}
\\[4ex]
 & \!\!\!\! \overset{(\ref{eq:tiltequiv})}{=} &\!\!\!\!  \mathrm{\widetilde{c}ap}(B_{3}) \; \dis\frac{\max_{y\in\partial_{i}B_{3}}\big\{E_{y}\big[\int_{0}^{T_{B_{4}}}1_{B_{3}}(X_{s})ds\big]\big\}}{1-\beta(N)}
\\[4ex]
 &\!\!\!\!  \leq & \!\!\!\! \mathrm{\widetilde{c}ap}(B_{3})\; \dis\frac{\max_{y\in\partial_{i}B_{3}}\big\{E_{y}\big[\int_{0}^{\infty}1_{B_{3}}(X_{s})ds\big]\big\}}{1-\beta(N)}
\\[4ex]
 &\!\!\!\! \overset{(\ref{eq:alpha})}{\leq} &\!\!\!\!  \mathrm{\widetilde{c}ap}(B_{3})c_{1}N^{2r_{3}}\; \dis\frac{1+\alpha(N)}{1-\beta(N)}.
\end{array}
\end{equation}

\medskip\n
On the other hand, by (\ref{eq:alpha}) of Lemma \ref{lem:MEA2}, we have a lower bound on the right-hand side of (\ref{eq:MEAN}):
\begin{equation}\label{eq:equa}
\frac{u_{**}+\epsilon}{u}E_{e_{B_{3}}}\Big[\int_{0}^{\infty}1_{B_{3}}(X_{s})ds\Big]\geq\frac{u_{**}+\epsilon}{u}\mathrm{cap}(B_{3})c_{1}N^{2r_{3}}(1-\alpha(N)).
\end{equation}

\medskip\n
Combining (\ref{eq:capcomp}), (\ref{eq:equa}) and Lemma \ref{lem:MEAN}, we find
\begin{equation}
\mathrm{\widetilde{c}ap}(B_{3})\frac{1+\alpha(N)}{1-\beta(N)}\geq\frac{u_{**}+\epsilon}{u}\mathrm{cap}(B_3)(1-\alpha(N)).
\end{equation}
With the help of (\ref{eq:alpha}) and (\ref{eq:pmax}) we see that Proposition \ref{prop:Capcompare} readily follows.
\end{proof}

We now turn to the domination of the equilibrium measures at a smaller
scale on $B_{1}$. In the proof of Proposition \ref{prop:mesdom},
thanks to the domination of capacities proved in Proposition \ref{prop:Capcompare},
we are able to reduce the domination of equilibrium measures to the
domination of (relative) entrance measures. This is performed in Lemma
\ref{lem:surfacedom}.
\begin{prop}
\label{prop:mesdom}When $N$ is large, for all $x\in\Gamma^N$ and $z\in\partial_{i}B_{1},$
\begin{equation}\label{eq:mesdom}
u\widetilde{e}_{B_{1}}(z)\geq\Big(u_{**}+\frac{\epsilon}{4}\Big)e_{B_{1}}(z).
\end{equation}
\end{prop}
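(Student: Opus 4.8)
The plan is to collapse (\ref{eq:mesdom}) to a comparison of escape probabilities and then quote Lemma \ref{lem:pmax}. Since $B_1\subset\overline{B_5}\subset K_N^\delta$ and, by (\ref{eq:fconstant}), $f$ equals the constant $c=\sqrt{(u_{**}+\epsilon)/u}$ on $K_N^\delta$, every $z\in\partial_iB_1$ and all its neighbours sit in $\{f\equiv c\}$; hence, reading off (\ref{eq:emdef}) and (\ref{eq:tiltedemdef}),
\[
e_{B_1}(z)=P_z[\widetilde H_{B_1}=\infty],\qquad u\,\widetilde e_{B_1}(z)=u\,c^2\,\widetilde P_z[\widetilde H_{B_1}=\infty]=(u_{**}+\epsilon)\,\widetilde P_z[\widetilde H_{B_1}=\infty].
\]
Thus it suffices to show that $\widetilde P_z[\widetilde H_{B_1}=\infty]\ge(1-o(1))\,P_z[\widetilde H_{B_1}=\infty]$, uniformly in $z\in\partial_iB_1$ and $x\in\Gamma^N$: together with $(u_{**}+\epsilon)(1-o(1))\ge u_{**}+\tfrac{\epsilon}{4}$ for $N$ large this gives (\ref{eq:mesdom}).

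To prove this comparison I would exploit that the tilt is frozen near $B_1$. On $\overline{B_4}\subset K_N^\delta$ the ratios $f(x+e)/f(x)$ in (\ref{eq:generatordef}) are all equal to $1$, so $\widetilde L$ there coincides with the generator of the continuous-time simple random walk; consequently $(X_t)_{0\le t\le T_{B_4}}$ has the same law under $\widetilde P_z$ and under $P_z$. Decomposing at the (a.s.\ finite, by transience) exit time $T_{B_4}$ and using the strong Markov property of the tilted walk,
\[
\widetilde P_z[\widetilde H_{B_1}=\infty]=\sum_{y\in\partial B_4}\widetilde P_z[\widetilde H_{B_1}>T_{B_4},\,X_{T_{B_4}}=y]\;\widetilde P_y[H_{B_1}=\infty].
\]
Now $B_1\subset B_3$ forces $\{H_{B_1}<\infty\}\subset\{H_{B_3}<\infty\}$, so Lemma \ref{lem:pmax} gives $\widetilde P_y[H_{B_1}=\infty]\ge1-\beta(N)$ for all $y\in\partial B_4$ and all $x\in\Gamma^N$. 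Hence
\[
\widetilde P_z[\widetilde H_{B_1}=\infty]\ge(1-\beta(N))\,\widetilde P_z[\widetilde H_{B_1}>T_{B_4}]=(1-\beta(N))\,P_z[\widetilde H_{B_1}>T_{B_4}]\ge(1-\beta(N))\,e_{B_1}(z),
\]
where I used the law‑identity on $B_4$ for the middle equality and $\{\widetilde H_{B_1}=\infty\}\subset\{\widetilde H_{B_1}>T_{B_4}\}$ for the last inequality. Since $\beta(N)\to0$, this is exactly the bound needed.

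The only genuine difficulty is thus the one already dispatched by Lemma \ref{lem:pmax}: controlling $\widetilde P_y[H_{B_1}<\infty]$ for $y$ on the mesoscopic sphere $\partial B_4$ is where the space‑modulation of the tilted walk outside $K_N^\delta$ — which drifts trajectories back toward $K_N$ — could in principle spoil things; everything else is a bookkeeping consequence of $f$ being constant on the large neighbourhood $K_N^\delta$ of $B_1$, together with the elementary observation that hitting $B_1$ entails hitting $B_3$. (If one prefers, the same comparison can be organized through the sweeping identities (\ref{eq:sweeping})–(\ref{eq:sweeping-1}), writing $e_{B_1}$ and $\widetilde e_{B_1}$ as images of $e_{B_3}$ and $\widetilde e_{B_3}$ under the entrance kernel into $B_1$ — which is common to both walks on $B_4$ — and then invoking the capacity comparison of Proposition \ref{prop:Capcompare} on $B_3$; the route above is simply the most economical packaging of that idea.)
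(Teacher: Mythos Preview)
Your argument is correct and is in fact more direct than the paper's. The paper proceeds via the sweeping identities (\ref{eq:sweeping}) and (\ref{eq:sweeping-1}) applied to $B_1\subset B_3$: it bounds $u\,\widetilde e_{B_1}(z)$ below by $u\,\widetilde{\mathrm{cap}}(B_3)\min_{y\in\partial_i B_3}\widetilde h_{B_1,B_4}(y,z)$, invokes the capacity comparison of Proposition~\ref{prop:Capcompare} to replace $u\,\widetilde{\mathrm{cap}}(B_3)$ by $(u_{**}+\tfrac{\epsilon}{2})\,\mathrm{cap}(B_3)$, and then needs Lemma~\ref{lem:surfacedom} (a Harnack/gradient argument on the killed Green function) to compare $\min_y h_{B_1,B_4}(y,z)$ with $\max_{\widetilde y} h_{B_1}(\widetilde y,z)$ before sweeping back down. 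Your route sidesteps both Proposition~\ref{prop:Capcompare} and Lemma~\ref{lem:surfacedom}: once one reads off from (\ref{eq:tiltedemdef}) and (\ref{eq:fconstant}) that $u\,\widetilde e_{B_1}(z)=(u_{**}+\epsilon)\,\widetilde P_z[\widetilde H_{B_1}=\infty]$, the whole statement reduces to $\widetilde P_z[\widetilde H_{B_1}=\infty]\ge(1-\beta(N))\,P_z[\widetilde H_{B_1}=\infty]$, which you obtain by stopping at $T_{B_4}$, using that the two walks agree up to $T_{B_4}$ (since $\overline{B_4}\subset K_N^\delta$), and quoting only Lemma~\ref{lem:pmax}. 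The gain is real: your proof uses neither the occupation-time identity of Lemma~\ref{lem:MEAN} nor the entrance-measure comparison of Lemma~\ref{lem:surfacedom}, and in effect renders Proposition~\ref{prop:Capcompare} unnecessary for the purposes of this section. The paper's approach, on the other hand, isolates the capacity comparison as a result of independent interest and makes the role of the intermediate scale $B_3$ more transparent.
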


The proof of Proposition \ref{prop:mesdom} relies on the following
lemma, where we prove that the killed entrance measure of $B_{1}$
almost dominates the corresponding standard entrance measure. From now on, we fix $\epsilon'=\epsilon/(4u_{**}+2\epsilon)$. We recall (\ref{eq:hdefsf})
for notation.
\begin{lem}\label{lem:surfacedom} For sufficiently large $N$, for all $x\in\Gamma^N$ and $z\in\partial_{i}B_{1},$
\begin{equation}\label{eq:surfacedom}
\min_{y\in\partial_{i}B_{3}}h_{B_{1},B_{4}}(y,z)\geq(1-\epsilon')\max_{\widetilde{y}\in\partial_{i}B_{3}}h_{B_{1}}(\widetilde{y},z).
\end{equation}
\end{lem}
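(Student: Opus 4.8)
The plan is to show that killing the walk upon its exit from $B_4$ costs almost nothing, and that $h_{B_1}(\cdot,z)$ is nearly constant along the sphere $\partial_i B_3$; together these force $\min_{y\in\partial_i B_3}h_{B_1,B_4}(y,z)$ to be essentially $\max_{\tilde y\in\partial_i B_3}h_{B_1}(\tilde y,z)$, which is exactly (\ref{eq:surfacedom}).

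First I would use the strong Markov property at $T_{B_4}$. Since $B_1\subseteq B_4$, for all $y\in\partial_i B_3$ and $z\in\partial_i B_1$,
\begin{equation*}
h_{B_1}(y,z)=h_{B_1,B_4}(y,z)+E_y\big[1_{\{T_{B_4}<H_{B_1}\}}\,h_{B_1}(X_{T_{B_4}},z)\big],
\end{equation*}
so in particular $h_{B_1}(y,z)\le h_{B_1,B_4}(y,z)+\max_{v\in\partial B_4}h_{B_1}(v,z)$. To bound the last term, note that since $B_1\subseteq B_3$ and $\partial B_4\subseteq B_3^c$, a walk started at $v\in\partial B_4$ must enter $B_3$ through $\partial_i B_3$ before it can reach $B_1$; the strong Markov property at $H_{B_3}$ then gives $h_{B_1}(v,z)\le P_v[H_{B_3}<\infty]\max_{y'\in\partial_i B_3}h_{B_1}(y',z)$, and a standard escape estimate (via (\ref{1.15}) and the Green function asymptotics (\ref{eq:greenasym})--(\ref{eq:contgreendef})) yields $\max_{v\in\partial B_4}P_v[H_{B_3}<\infty]\le c\,N^{(r_3-r_4)(d-2)}$, uniformly over $x\in\Gamma^N$ by translation invariance. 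Since $r_3<r_4$, this power of $N$ tends to $0$, and we obtain
\begin{equation*}
h_{B_1,B_4}(y,z)\ \geq\ h_{B_1}(y,z)-c\,N^{(r_3-r_4)(d-2)}\max_{y'\in\partial_i B_3}h_{B_1}(y',z).
\end{equation*}

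It then remains to show that $h_{B_1}(\cdot,z)$ varies by only a factor $1+o(1)$ over $\partial_i B_3$, uniformly in $z\in\partial_i B_1$ and $x\in\Gamma^N$. For this I would invoke the classical fact that the harmonic measure of the finite set $B_1$ as seen from a point $y$ far from $B_1$ converges to the normalized equilibrium measure $e_{B_1}/\mathrm{cap}(B_1)$ with relative error $O(\mathrm{diam}(B_1)/|y-x|)$, uniformly in the entrance point, combined with $P_y[H_{B_1}<\infty]=\sum_w g(y,w)e_{B_1}(w)=g(y,x)\,\mathrm{cap}(B_1)\big(1+O(\mathrm{diam}(B_1)/|y-x|)\big)$, which follows from (\ref{1.15}) and (\ref{eq:greenasym}). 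For $y\in\partial_i B_3$ one has $|y-x|=N^{r_3}(1+O(N^{-r_3}))$, so this gives
\begin{equation*}
h_{B_1}(y,z)=g(y,x)\,e_{B_1}(z)\big(1+O(N^{r_1-r_3}+N^{-r_3})\big),
\end{equation*}
with the error uniform in $y\in\partial_i B_3$, $z\in\partial_i B_1$, $x\in\Gamma^N$; and since $g(y,x)=c_0 d\,|y-x|^{2-d}+O(|y-x|^{1-d})$ is itself constant over $\partial_i B_3$ up to a factor $1+O(N^{-r_3})$, the oscillation claim follows. Plugging this into the previous display, for $N$ large,
\begin{equation*}
h_{B_1,B_4}(y,z)\ \geq\ \big(1-o(1)-c\,N^{(r_3-r_4)(d-2)}\big)\max_{\tilde y\in\partial_i B_3}h_{B_1}(\tilde y,z)\ \geq\ (1-\epsilon')\max_{\tilde y\in\partial_i B_3}h_{B_1}(\tilde y,z),
\end{equation*}
and taking the minimum over $y\in\partial_i B_3$ yields (\ref{eq:surfacedom}).

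I expect the main obstacle to be this last step: the quantitative near-constancy of the hitting distribution of $B_1$ along the intermediate sphere $\partial_i B_3$, uniformly in the entrance point $z$ and the base point $x$. It is precisely here that the separation of scales $r_1<r_2<r_3<r_4$ enters (the intermediate radii produce exactly the vanishing powers $N^{r_1-r_3}$ and $N^{(r_3-r_4)(d-2)}$), and where one needs the precise Green function expansion (\ref{eq:greenasym}) rather than a mere Harnack inequality, which would only yield a bounded, not small, oscillation. A more self-contained route, avoiding the ``harmonic measure from afar'' statement, is to decompose the walk started on $\partial_i B_3$ at its first entrance into the box $B_2$ and to verify, again via (\ref{eq:greenasym}) and a summation over $\partial_i B_2$, that the resulting entrance law on $\partial_i B_2$ depends on the starting point only through a factor $1+O(N^{r_2-r_3})$; the comparison of $h_{B_1,B_4}$ with $h_{B_1}$ then propagates through this decomposition exactly as in the first two steps above.
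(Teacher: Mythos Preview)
Your argument is correct and takes a genuinely different route from the paper's. The paper does not use the Markov decomposition at $T_{B_4}$; instead it performs a \emph{last-step} decomposition,
\[
h_{B_1,B_4}(y,z)=\tfrac{1}{2d}\sum_{z'\sim z,\,z'\in\partial B_1}g_{B_4\setminus B_1}(y,z'),\qquad
h_{B_1}(\tilde y,z)=\tfrac{1}{2d}\sum_{z'\sim z,\,z'\in\partial B_1}g_{B_1^c}(\tilde y,z'),
\]
and compares the two killed Green functions term by term via the $A+B$ technique of \cite{BenjSzni08} (splitting $g_{B_4\setminus B_1}(z',y)$ according to whether the walk from $z'$ returns to $B_1$ before exiting $B_4$), combined with a gradient estimate and the Harnack inequality. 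This produces the factor $P_{z'}[H_{B_1}>T_{B_4}]\ge P_{z'}[H_{B_1}=\infty]$, and a relative error of order $N^{2r_1-r_3}$, which is exactly where the hypothesis $2r_1<r_3$ of (\ref{eq:r1234choice}) enters. Your approach is more conceptual --- you package the near-constancy of $h_{B_1}(\cdot,z)$ on $\partial_i B_3$ into a single ``harmonic measure from afar'' statement --- whereas the paper's is more hands-on and self-contained.

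One caveat: the uniform \emph{relative} error $O(N^{r_1-r_3})$ you claim for $h_{B_1}(y,z)=g(y,x)e_{B_1}(z)(1+O(N^{r_1-r_3}))$ is correct but is not quite the textbook statement; the direct route through $h_{B_1}(y,z)=\frac{1}{2d}\sum_{z'}g_{B_1^c}(y,z')$ and the gradient bound $g_{B_1^c}(y,z')=g(y,x)P_{z'}[H_{B_1}=\infty]+O(N^{r_1}|y-x|^{1-d})$ only gives a relative error $O(N^{r_1-r_3}/e_{B_1}(z))$, and since $\min_{z\in\partial_i B_1}e_{B_1}(z)\asymp N^{-r_1}$ for a box, this is $O(N^{2r_1-r_3})$ --- the same exponent the paper obtains, and again vanishing thanks to $2r_1<r_3$. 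Getting the sharper $O(N^{r_1-r_3})$ requires an extra ingredient (e.g.\ the asymptotic expansion of bounded positive harmonic functions on the complement of a ball), so either cite that carefully or be content with $O(N^{2r_1-r_3})$; the conclusion (\ref{eq:surfacedom}) follows either way.
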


\medskip
The proof of Lemma \ref{lem:surfacedom} has the same flavour as Section
3 of \cite{BenjSzni08} and indeed relies on Lemma 3.3 of the
same reference.
\begin{proof}
We decompose $h_{B_{1,}B_{4}}(y,z)$ according to the time and place
of the last step before entering $B_{1}$ at $z$, and obtain for
$y$ outside $B_{1}$ and $z$ in $B_{1}$

\begin{equation}\label{3.34}
h_{B_{1},B_{4}}(y,z)=\frac{1}{2d}\sum_{z'\sim z,z'\in\partial B_{1}}g_{B_{4}\backslash B_{1}}(y,z').
\end{equation}

\n
Similarly, we have for $\widetilde{y}$ outside $B_{1}$ and $z$ in $B_{1}$,
\begin{equation}\label{3.35}
h_{B_{1}}(\widetilde{y},z)=\frac{1}{2d}\sum_{z'\sim z,z'\in\partial B_{1}}g_{B_{1}^{c}}(\widetilde{y},z').
\end{equation}

\n
Therefore, to prove (\ref{eq:surfacedom}), it suffices to show that for
large $N$ and for all $y,\widetilde{y}\in\partial_{i}B_{3}$ and $z'\in\partial B_{1}$
\begin{equation}
g_{B_{4}\backslash B_{1}}(y,z')\geq(1-\epsilon')g_{B_{1}^{c}}(\widetilde{y},z').\label{eq:greenineq}
\end{equation}

\medskip\n
By an argument similar to Lemma 3.3 of \cite{BenjSzni08} to $B_{4}$
and $B_{1}$, we have that
\begin{equation}\label{3.37}
\begin{array}{lcl}
g_{B_{4}\backslash B_{1}}(y,z') & \!\!\! \overset{\textrm{symmetry}}{=} &  \! g_{B_{4}\backslash B_{1}}(z',y)
\\[2ex]
 & \!\!\!  \overset{\textrm{Markov}}{=} & \!  g_{B_{4}}(z',y)-E_{z'}[g_{B_{4}}(X_{H_{B_{1}}},y),\ H_{B_{1}}<T_{B_{4}}]
 \\[2ex]
 &\!\! \stackrel{\textrm{symmetry}}{=} & \! E_{z'}[g_{B_{4}}(y,z')-g_{B_{4}}(y,X_{H_{B_{1}}}),\ H_{B_{1}}<T_{B_{4}}]
 \\[2ex]
 &  \!\!\!  &  \! +~g_{B_{4}}(y,z')P_{z'}[H_{B_{1}}>T_{B_{4}}]\:\overset{\mathrm{def}}{=}\: A+B.
\end{array}
\end{equation}

\medskip\n
Then, by the gradient estimate and the Harnack inequality in Theorems 1.7.1, and 1.7.2,  p.~42 of \cite{Lawl91},
\begin{equation}\label{3.38}
|A|\leq\frac{c}{N^{r_{3}}}N^{r_{1}}g_{B_{4}}(y,z'),
\end{equation}
and by a similar argument as below (3.30) of \cite{BenjSzni08},
\begin{equation}\label{3.39}
B\geq\frac{c}{N^{r_{1}}}g_{B_{4}}(y,z').
\end{equation}
Hence, collecting (\ref{3.37}), (\ref{3.38}), (\ref{3.39}), we find that
\begin{equation}\label{eq:gest1}
g_{B_{4}\backslash B_{1}}(y,z')\geq g_{B_{4}}(y,z')P_{z'}[H_{B_{1}}>T_{B_{4}}](1-cN^{2r_{1}-r_{3}}),
\end{equation}
By analogous arguments we also obtain
\begin{equation}\label{eq:gest2}
g_{B_{1}^{c}}(\widetilde{y},z')\le g(\widetilde{y},z')P_{z'}[H_{B_{1}}=\infty](1+cN^{2r_{1}-r_{3}}).
\end{equation}

\medskip\n
By the definition of $r_{1}$ and $r_{3}$ (see (\ref{eq:r1234choice})),
$N^{2r_{1}-r_{3}}\ll1$. Therefore, combining (\ref{eq:gest1}),
(\ref{eq:gest2}) together with the fact that
\begin{equation}\label{3.42}
P_{z'}[H_{B_{1}}>T_{B_{4}}]\geq P_{z'}[H_{B_{1}}=\infty],
\end{equation}

\medskip\n
the claim (\ref{eq:greenineq}) will follow once we show (see above Lemma \ref{lem:surfacedom} for our choice of $\epsilon'$) that when $N$ is sufficiently large, for all $x \in \Gamma^N$, all $y,\widetilde{y}\in\partial_{i}B_{3}$ and
all $z'\in\partial B_{1}$,
\begin{equation}\label{eq:gcomp}
g_{B_{4}}(y,z')\geq\Big(1-\frac{\epsilon'}{2}\Big)\,g(\widetilde{y},z').
\end{equation}

\medskip\n
By (\ref{eq:greenasym}) and (\ref{eq:gUestimate}), for large $N$, setting $\widetilde{B} = B(y, \frac{N^{r_4}}{2})$ we have the following bounds:
\begin{equation}\label{eq:g6}
g_{B_{4}}(y,z')\,\geq g_{\widetilde{B}} (y,z') \geq\bar{c}_{0} |y - z'|^{(2-d)}-cN^{r_{4}(2-d)}- c'N^{r_{3}(1-d)}
\end{equation}
and
\begin{equation}\label{eq:g7}
g(\widetilde{y},z')\leq\bar{c}_{0}|y- z'|^{(2-d)}+cN^{r_{3}(1-d)}.
\end{equation}

\medskip\n
Hence, we obtain (\ref{eq:gcomp}) and (\ref{eq:greenineq}) follows.
This proves Lemma \ref{lem:surfacedom}.
\end{proof}

\medskip
We are now ready to prove Proposition \ref{prop:mesdom}. In the proof,
we make use of the sweeping identity, and, in effect, reduce the comparison
of the standard and tilted equilibrium measures of $B_{1}$ to the comparison on the standard and tilted capacities of $B_{3}$, and to the comparison of the (killed)
entrance measures.
\begin{proof}[Proof of Proposition \ref{prop:mesdom}]
For large $N$ and for all $x\in\Gamma^N$ and $z\in\partial_{i}B_{1},$ we find that
\begin{equation}\label{eq:halfsurfacecontrol}
\begin{array}{lcl}
u\widetilde{e}_{B_{1}}(z) &\!\!\! \overset{\textrm{(\ref{eq:sweeping-1})}}{=} &\!\!\!  u\,\widetilde{P}_{\widetilde{e}_{B_{3}}}(X_{H_{B_{1}}}=z,\, H_{B_{1}}<\infty)
\\[2ex]
 &\!\!\!   \geq &\!\!\!  u\, \mathrm{\widetilde{c}ap}(B_{3})\min_{y\in\partial_{i}B_{3}}\widetilde{h}_{B_{1}}(y,z)
 \\[2ex]
 &\!\!\! \overset{(\ref{eq:Capcompare})}{\geq} &\!\!\!  \Big(u_{**}+\dis\frac{\epsilon}{2}\Big)\;\mathrm{cap}(B_{3})\min_{y\in\partial_{i}B_{3}}\widetilde{h}_{B_{1}}(y,z)
 \\[3ex]
 &\!\!\!  \geq & \!\!\! \Big(u_{**}+\dis\frac{\epsilon}{2}\Big)\;\mathrm{cap}(B_{3})\min_{y\in\partial_{i}B_{3}}\widetilde{h}_{B_{1},B_{4}}(y,z).
\end{array}
\end{equation}

\medskip\n
Since up to the exit time from $B_{4}$ the tilted and standard walk
have the same law (see (\ref{eq:fconstant})), we see that for $y\in\partial_{i}B_{3}$ and $z\in\partial B_{1}$, we have
\begin{equation}\label{eq:hsrw}
\widetilde{h}_{B_{1},B_{4}}(y,z)=h_{B_{1},B_{4}}(y,z).
\end{equation}

\medskip\n
Taking Lemma \ref{lem:surfacedom} into account, we find that for
large $N$ and for all $x\in\Gamma^N$ and $z\in\partial B_{1}$,
\begin{equation}
\min_{y\in\partial_{i}B_{3}}h_{B_{1},B_{4}}(y,z)\overset{(\ref{eq:surfacedom})}{\geq}(1-\epsilon')\max_{\widetilde{y}\in\partial_{i}B_{3}}h_{B_{1}}(\widetilde{y},z).
\end{equation}

\n
Thus, coming back to (\ref{eq:halfsurfacecontrol}), we find that
with our choice of $\epsilon'$ (above Lemma \ref{lem:surfacedom}),
\begin{equation}\label{3.48}
\begin{array}{lcl}
u\widetilde{e}_{B_{1}}(z) &\!\!\!\! \geq &\!\!\!\! \Big(u_{**}+ \dis \frac{\epsilon}{4}\Big)\mathrm{cap}(B_{3})\max_{\widetilde{y}\in\partial_{i}B_{3}}h_{B_{1}}(\widetilde{y},z)
\\[2ex]
 &\!\!\!\! \overset{(\ref{eq:hdefsf})}{\ge} & \!\!\!\! \Big(u_{**}+\dis \frac{\epsilon}{4}\Big)\, P_{e_{B_{3}}}(X_{H_{B_{1}}}=z,\, H_{B_{1}}<\infty)
 \\[2ex]
 &\!\!\!\! \stackrel{(\ref{eq:sweeping})}{=} & \!\!\!\! \Big(u_{**}+\dis \frac{\epsilon}{4}\Big)\, e_{B_{1}}(z).
 \end{array}
 \end{equation}

 \medskip\n
This completes the proof of Proposition \ref{prop:mesdom}.
\end{proof}

\section{Coupling and Disconnection}

In this section, we prove in Theorem \ref{thm:tiltedone} that the
tilted interlacements disconnect $K_{N}$ from infinity with a probability,
which tends to 1 as $N$ goes to infinity. To this end, we show that
in mesoscopic boxes with centers in $\Gamma^{N}$ (introduced above (\ref{eq:r1234choice})), the tilted random interlacements
locally ``dominate'' random interlacements with level higher than $u_{**}$,
and thus typically disconnect in each such box the center from its boundary with
very high probability. Therefore, there is a high probability as well
for the tilted interlacement to disconnect the macroscopic body from
infinity. The main step is Proposition \ref{prop:coupling} where we construct at each point of $\Gamma^N$ a coupling so that the tilted random interlacements with high probability locally dominate some standard random interlacements with level higher than $u_{**}$. 

\medskip
We recall the definitions of $B_{1}$ and \textbf{$B_{2}$} from (\ref{eq:boxdef}).

\begin{prop} \label{prop:coupling} When $N$ is large, for all $x\in\Gamma^{N}$,
there exists a probability space $(\bar{\Omega},\bar{\mathcal{A}},\bar{Q})$
and random sets $\widetilde{\mathcal{I}}$ and $\mathcal{I}_{1}$ defined
on $\bar{\Omega}$, with same respective laws as $\mathcal{I}^{u}\cap B_{1}$
under $\widetilde{\mathbb{P}}_{N}$ and $\mathcal{I}^{u_{**}+\frac{\epsilon}{8}}$
under $\mathbb{P}_{u_{**}+\frac{\epsilon}{8}}$, so that
\begin{equation}\label{eq:coupling}
\bar{Q}[\widetilde{\mathcal{I}}\supset\mathcal{I}_{1}]\geq1-c_{5}e^{-c_{6}N^{c_{7}}}
\end{equation}

\n
(the constants depend on $r_1, r_2, \epsilon$).
\end{prop}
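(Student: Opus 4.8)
The plan is to pass, on both sides, from traces of interlacements on $B_1$ to Poisson point processes of \emph{simple random walk excursions between $\partial_i B_1$ and $\partial B_2$}, and to compare these two excursion processes using the domination of equilibrium measures in Proposition \ref{prop:mesdom}. The structural point that makes this possible is that, by (\ref{eq:fconstant}) and (\ref{3.4}), $f$ is constant on $K_N^\delta$, and $K_N^\delta\supseteq\overline{B_5}\supseteq\overline{B_2}$; hence, up to its first exit from $B_2$, the tilted walk is an ordinary simple random walk, so that every excursion inside $B_2$ performed by a tilted trajectory is an ordinary simple random walk excursion. Note also that $2r_1<r_2$ forces $\partial B_2$ to sit at sup-norm distance $\gg N^{r_1}=\mathrm{diam}(B_1)$ from $B_1$, which will make re-entries into $B_1$ from outside $B_2$ negligible after summing over trajectories.

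First I would rewrite the tilted side. By (\ref{eq:tiltedentrancemes}) with $M=B_1$, under $\widetilde{\mathbb{P}}_N$ the set $\widetilde{\mathcal{I}}=\mathcal{I}^{u}\cap B_1$ is the union, over the Poisson cloud $\mu_{B_1}$ of onward trajectories (intensity $u\,\widetilde{P}_{\widetilde{e}_{B_1}}$), of their ranges intersected with $B_1$. Sending each such onward trajectory to the excursion $(X_t)_{0\le t\le T_{B_2}}$ it begins with --- a simple random walk excursion started at its entrance point in $\partial_i B_1$, by the structural point above --- produces a Poisson point process $\widetilde{\mathcal{N}}$ on the space of such excursions with intensity measure $u\sum_{z\in\partial_i B_1}\widetilde{e}_{B_1}(z)\,\mathcal{R}_z$, where $\mathcal{R}_z$ denotes the law of simple random walk started at $z$ and run until $T_{B_2}$; moreover $\widetilde{\mathcal{I}}\supseteq\bigcup_{\eta}\mathrm{Range}(\eta)\cap B_1$, the union ranging over the atoms $\eta$ of $\widetilde{\mathcal{N}}$ (we discard all later excursions, which only helps a lower bound). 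By Proposition \ref{prop:mesdom}, $u\,\widetilde{e}_{B_1}(z)\ge(u_{**}+\tfrac{\epsilon}{4})\,e_{B_1}(z)$ for every $z\in\partial_i B_1$, so the superposition/restriction property of Poisson processes lets us realize, inside $\widetilde{\mathcal{N}}$, a sub-process $\widetilde{\mathcal{N}}_0$ of the same type with intensity measure $(u_{**}+\tfrac{\epsilon}{4})\sum_{z\in\partial_i B_1}e_{B_1}(z)\,\mathcal{R}_z$.

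Next I would invoke the local description of random interlacements from \cite{Beli13} (see also \cite{Szni12b}): for the concentric boxes $B_1\subset B_2$ with $2r_1<r_2$, there is a coupling, valid outside an event of probability at most $c\,e^{-c'N^{c''}}$, under which the set $\mathcal{I}_1$ of the statement (distributed as $\mathcal{I}^{u_{**}+\epsilon/8}\cap B_1$) is contained in $\bigcup_{\eta'}\mathrm{Range}(\eta')\cap B_1$, where the $\eta'$ are the atoms of a Poisson point process $\mathcal{N}_1$ of simple random walk excursions between $\partial_i B_1$ and $\partial B_2$ whose intensity measure is at most $(u_{**}+\tfrac{\epsilon}{8})(1+\rho_N)\sum_{z\in\partial_i B_1}e_{B_1}(z)\,\mathcal{R}_z$, with $\rho_N\to0$; here $\rho_N$ absorbs the (rare, by $2r_1<r_2$) re-entries of $B_1$ after an exit from $B_2$ together with the fact that their entrance points in $\partial_i B_1$ are distributed as $e_{B_1}/\mathrm{cap}(B_1)$ only up to a factor $1+O(N^{r_1-r_2})$, by the Harnack inequality. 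Since $u_{**}+\tfrac{\epsilon}{4}>(u_{**}+\tfrac{\epsilon}{8})(1+\rho_N)$ for $N$ large, the intensity measure of $\widetilde{\mathcal{N}}_0$ dominates that of $\mathcal{N}_1$ on the common excursion space, so a thinning realizes $\mathcal{N}_1$ as a sub-process of $\widetilde{\mathcal{N}}_0\subseteq\widetilde{\mathcal{N}}$ at no further cost. Stacking the three couplings on one probability space $(\bar{\Omega},\bar{\mathcal{A}},\bar{Q})$ then yields $\mathcal{I}_1\subseteq\bigcup_{\eta'}\mathrm{Range}(\eta')\cap B_1\subseteq\widetilde{\mathcal{I}}$ outside an event of probability $\le c_5\,e^{-c_6 N^{c_7}}$, uniformly over $x\in\Gamma^N$; the dependence on $r_1,r_2$ enters through the scale separation in the excursion coupling, and on $\epsilon$ through the slack $\tfrac{\epsilon}{8}$ between $u_{**}+\tfrac{\epsilon}{4}$ and $u_{**}+\tfrac{\epsilon}{8}$.

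The two substantive inputs are Proposition \ref{prop:mesdom} (already proved) and the local description borrowed from \cite{Beli13}; once both are in hand, the comparison of excursion intensities is a one-line consequence of pointwise domination, and it is precisely the borrowed coupling that supplies the stretched-exponential error. The main obstacle, were one to reprove rather than cite it, is exactly this last point: turning $\mathcal{I}^{u_{**}+\epsilon/8}\cap B_1$ into a genuine Poisson cloud of (essentially) i.i.d.\ simple random walk excursions with a sharp enough intensity measure, controlling at exponential cost in a power of $N$ the mild non-independence and non-Poissonianity of the excursion counts and the re-entries through $\partial B_2$.
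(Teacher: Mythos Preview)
Your proposal is correct and follows essentially the same route as the paper. Both arguments (i) use that $f$ is constant on $K_N^\delta\supseteq\overline{B_2}$ so that tilted trajectories are simple random walk up to $T_{B_2}$, (ii) invoke Proposition~\ref{prop:mesdom} to dominate $(u_{**}+\tfrac{\epsilon}{4})\,e_{B_1}$ by $u\,\widetilde{e}_{B_1}$ and hence couple the excursion Poisson processes by thinning, and (iii) borrow from \cite{Beli13} the coupling that puts $\mathcal{I}^{u_{**}+\epsilon/8}\cap B_1$ inside the trace of a Poisson cloud of first excursions. The only cosmetic difference is that the paper packages step~(iii) as a standalone statement (Proposition~\ref{prop:coupinter}) with the excursion process already at intensity $(u_{**}+\tfrac{\epsilon}{4})\,k_{B_1,B_2}$, absorbing your $(1+\rho_N)$ factor into the passage from $\tfrac{\epsilon}{8}$ to $\tfrac{\epsilon}{4}$; this makes the final thinning comparison a one-liner rather than requiring the extra inequality $u_{**}+\tfrac{\epsilon}{4}>(u_{**}+\tfrac{\epsilon}{8})(1+\rho_N)$.
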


\medskip
The idea of the proof is to stochastically dominate the trace in $B_{1}$
of  random interlacements with level higher than $u_{**}$ by the ``first excursions''
(from some inner boundary of $B_{1}$ to $\partial B_{2}$) of the
trajectories from some random interlacements with slightly higher intensity,
and then, further dominate these excursions by the same kind of ``first
excursions'' of trajectories of the tilted interlacement. The following
proposition for the above mentioned first stochastic domination in essence rephrases Proposition 4.4 of \cite{Beli13}. We begin with some notation.

\medskip
For $A\subset B\subset\subset\mathbb{Z}^{d}$, we write $k_{A,B}$
for the law on $\Gamma(\IZ^d)$ (see below (\ref{1.4})) of the stopped process $X_{\cdot\wedge T_{B}}$ under $P_{e_{A}}$. We also denote the trace of a point process $\eta=\sum_{i}\delta_{w_{i}}$ on the space $\Gamma (Z^d)$ by 
\begin{equation}\label{4.2}
\mathcal{I}(\eta)=\cup_{i}Range(w_{i}).
\end{equation}

\begin{prop}\label{prop:coupinter} When $N$ is large, for all $x \in \Gamma^N$, there exists a probability
space $(\Sigma,\mathcal{B},Q)$ endowed with a Poisson point process
$\eta$, with intensity measure $(u_{**}+\epsilon/4)k_{B_{1},B_{2}}$,
and a random set $\mathcal{I}_{1}\subset\mathbb{Z}^{d}$ with the
law of \textup{$\mathcal{I}^{u_{**}+\frac{\epsilon}{8}}\cap B_{1}$
under $\mathbb{P}_{u_{**}+\frac{\epsilon}{8}}$, and
\begin{equation}\label{4.3}
Q[\mathcal{I}_{1}\subset\mathcal{I}(\eta)\cap B_{1}]\geq1-c_{5}e^{-c_{6}N^{c_{7}}}.
\end{equation}
}
\end{prop}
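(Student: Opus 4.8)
The plan is to realize both $\mathcal{I}^{u_{**}+\epsilon/8}\cap B_1$ and the trace of the Poisson point process $\eta$ on a common probability space by exhibiting the former as a deterministic functional of a slightly richer Poisson structure, and then invoking the stochastic domination result of \cite{Beli13}. Concretely, I would first recall that $\mathcal{I}^{u_{**}+\epsilon/8}\cap B_1$ depends only on the excursions into $B_1$ of the interlacement trajectories, i.e.~only on the point process $\mu_{B_1}(\widehat\omega)$ under $\mathbb{P}_{u_{**}+\epsilon/8}$, which by the standard description of random interlacements (the analogue of (\ref{eq:rimespty0})--(\ref{eq:rimespty}) in the untilted case, cf.~(\ref{eq:tiltedentrancemes}) specialized to $f\equiv1$) is a Poisson point process on $\widehat W^+$ with intensity $(u_{**}+\epsilon/8)\,P_{e_{B_1}}$. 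From such a trajectory one extracts its \emph{first excursion} from $B_1$ up to time $T_{B_2}$, namely the stopped path $X_{\cdot\wedge T_{B_2}}$; forgetting the rest of the trajectory maps this Poisson point process onto a Poisson point process on $\Gamma(\IZ^d)$ with intensity $(u_{**}+\epsilon/8)\,k_{B_1,B_2}$. Since the range of the full interlacement trajectory inside $B_1$ contains the range of its first excursion, this already gives a coupling in which $\mathcal{I}^{u_{**}+\epsilon/8}\cap B_1$ is contained in $\mathcal{I}(\eta')\cap B_1$ for $\eta'$ a Poisson point process of intensity $(u_{**}+\epsilon/8)\,k_{B_1,B_2}$.

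The remaining gap is between intensity $u_{**}+\epsilon/8$ on $k_{B_1,B_2}$ and the target intensity $u_{**}+\epsilon/4$ on $k_{B_1,B_2}$; but here one cannot simply add an independent Poisson cloud, because the subsequent excursions of an interlacement trajectory (after it leaves $B_2$ and possibly returns) also deposit points in $B_1$, and these are \emph{not} distributed as independent $k_{B_1,B_2}$-excursions. This is exactly the content of Proposition 4.4 of \cite{Beli13}: on a box $B_1$ contained in a slightly larger box $B_2$ with $B_2\subset\subset\IZ^d$ of comparable (mesoscopic) size, the trace on $B_1$ of random interlacements at level $v$ is stochastically dominated by the trace on $B_1$ of a Poisson point process of first excursions from $B_1$ to $B_2$ at the marginally larger level $v(1+\theta)$, with an error probability that is stretched-exponentially small in the ratio of side-lengths of $B_2$ and $B_1$. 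I would apply this with $v=u_{**}+\epsilon/8$ and $\theta$ chosen so that $v(1+\theta)=u_{**}+\epsilon/4$ (possible once $N$, hence the scale separation $N^{r_2}/N^{r_1}$, is large, since $r_1<r_2$); the resulting error is $c_5 e^{-c_6 N^{c_7}}$ with constants depending on $r_1,r_2,\epsilon$, as stated. Composing this domination with the reduction of the previous paragraph yields a single probability space $(\Sigma,\mathcal B,Q)$ carrying $\eta$ of intensity $(u_{**}+\epsilon/4)k_{B_1,B_2}$ and a set $\mathcal{I}_1$ distributed as $\mathcal{I}^{u_{**}+\epsilon/8}\cap B_1$ with $Q[\mathcal{I}_1\subset\mathcal{I}(\eta)\cap B_1]\ge 1-c_5 e^{-c_6 N^{c_7}}$, which is (\ref{4.3}).

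A small technical point to address is that the cited Proposition 4.4 of \cite{Beli13} is stated in the discrete-time setting, whereas here we work with continuous-time interlacements; however, the set $\mathcal{I}^u$ is insensitive to the time parametrization of the trajectories, so one passes to the discrete skeleton (which is simple random walk) and the domination transfers verbatim, with $k_{A,B}$ understood as the law of the stopped \emph{discrete} walk $Z_{\cdot\wedge T_B}$ under $P_{e_A}$ — consistent with the definition given before the statement. The genuinely substantive input is Beli{\v s}'s domination estimate; the rest of the argument is the bookkeeping that identifies the law of $\mathcal{I}^{u_{**}+\epsilon/8}\cap B_1$ with the trace of a first-excursion Poisson point process at the same level and monotonically enlarges it. I expect the main obstacle, should one wish to be fully self-contained, to be reproving that domination — i.e.~controlling the correlations between the successive returns of an interlacement trajectory to the small box $B_1$ through the Green-function estimates on the annulus $B_2\setminus B_1$ — but since Proposition 4.4 of \cite{Beli13} may be quoted, here it suffices to verify that its hypotheses (the geometry $B_1\subset B_2$, the scale ratio, and the level increment) are met for our mesoscopic boxes with $x\in\Gamma^N$.
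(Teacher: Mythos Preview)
Your overall plan---invoke Belius's stochastic domination result---is exactly what the paper does; indeed the paper's entire ``proof'' of this proposition is a one-line citation to Proposition~5.4 and Section~8 of \cite{Beli13}. So in substance you are aligned with the paper.

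However, your first paragraph contains a genuine error that you should fix. You write that since the range of the full interlacement trajectory in $B_1$ contains the range of its first excursion, ``this already gives a coupling in which $\mathcal{I}^{u_{**}+\epsilon/8}\cap B_1$ is \emph{contained in} $\mathcal{I}(\eta')\cap B_1$''. The containment goes the other way: truncating each trajectory at $T_{B_2}$ can only \emph{shrink} the trace, so what you obtain is $\mathcal{I}(\eta')\cap B_1\subset \mathcal{I}^{u_{**}+\epsilon/8}\cap B_1$, which is useless for the desired direction. Your second paragraph implicitly acknowledges this (the later excursions ``also deposit points in $B_1$''), but the framing---that the remaining issue is merely an intensity gap to be bridged---is misleading. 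The entire content of the proposition is that a slight \emph{increase} in intensity of the first-excursion process compensates for discarding all subsequent returns; there is nothing to ``compose'' with your first-paragraph reduction, because that reduction points the wrong way.

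Once you drop the first paragraph, your invocation of Belius's domination (interlacements at level $v$ in $B_1$ are stochastically dominated by first excursions at level $v(1+\theta)$, with stretched-exponential error in the scale ratio $N^{r_2-r_1}$) is correct and complete. Two minor remarks: the paper cites Proposition~5.4 rather than~4.4 of \cite{Beli13}, so double-check the numbering in the version you are using; and your observation about passing between discrete and continuous time via the skeleton is correct and worth keeping.
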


We refer the readers to Proposition 5.4 of \cite{Beli13} and to Section
8 of \cite{Beli13} for the proof of Proposition \ref{prop:coupinter}.

\medskip
We now construct another coupling such that the trace on $B_1$ of the first
excursions of the tilted random interlacements dominate the trace of the corresponding excursions for random interlacements at level $u_{**}+\frac{\epsilon}{4}$. Combined
with Proposition \ref{prop:coupinter}, this will complete the proof of
Proposition \ref{prop:coupling}.

\begin{proof}[Proof of Proposition \ref{prop:coupling}]
We keep the notation of Proposition \ref{prop:coupinter}. Let $\alpha$
be the measure on $\partial_{i}B_{1}$ such that for all $z\in\partial_{i}B_{1}$,
\begin{equation}
\alpha(z)=u\widetilde{e}_{B_{1}}(z)- \Big(u_{**}+\frac{\epsilon}{4}\Big)\,e_{B_{1}}(z).
\end{equation}

\n
By Proposition \ref{prop:mesdom} $\alpha$ is a positive measure.
Hence, we can construct an auxiliary probability space $(\widetilde{\Omega},\widetilde{\mathcal{A}},\widetilde{Q})$, endowed with a Poisson point process $\widetilde{\eta}$ on $\Gamma(\mathbb{Z}^{d})$ with intensity measure $k_{\alpha}(\cdot)=P_{\alpha}(X_{\cdot\wedge T_{B_{2}}})$.
Since for all $z$ in $\partial_{i}B_{1}$, the tilted walk coincides with the simple random walk up to the exit from $B_{2}$, we obtain that
\begin{equation}\label{4.5}
\widetilde{\mathcal{I}}=(\mathcal{I}(\widetilde{\eta})\cup\mathcal{I}(\eta))\cap B_{1}\textrm{ is stochastically dominated by }\mathcal{I}^{u}\cap B_{1}\textrm{ under }\widetilde{\mathbb{P}}_{N}.
\end{equation}
We can thus construct on some extension $(\overline{\Omega}, \overline{{\cal A}}, \overline{Q})$ an $\widetilde{{\cal I}}$ distributed as ${\cal I}^u \cap B_1$ under $\widetilde{\IP}_N$, so that $\widetilde{\cal I} \supseteq {\cal I}(\eta)$, $\overline{Q}$-a.s.. We then have
\begin{equation}\label{4.6}
\overline{Q}[\widetilde{\mathcal{I}}\supset\mathcal{I}_{1}]\geq\overline{Q}[\mathcal{I}(\eta)\cap B_{1}\supset\mathcal{I}_{1}]=Q[\mathcal{I}(\eta)\cap B_{1}\supset\mathcal{I}_{1}]\stackrel{(\ref{4.3})}{\geq} 1-c_{5}e^{-c_{6}N^{c_{7}}}.
\end{equation}
\end{proof}

\medskip
We are now ready to derive a key step for the proof of Theorem
\ref{thm:lowerbound}. Namely, we will now show that with $\widetilde{\mathbb{P}}_{N}$-probability
tending to $1$, the event $A_{N}$($=\{K_{N}\; \overset{\mathcal{V}^{u}}{\mbox{\Large $\nleftrightarrow$}}\; \infty\}$, see (\ref{eq:ANdef})) does occur.
\begin{thm} \label{thm:tiltedone}
\begin{equation}\label{eq:tiltedconv}
\lim_{N\to\infty}\widetilde{\mathbb{P}}_{N}[A_{N}]=1.
\end{equation}
\end{thm}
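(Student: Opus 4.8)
The plan is to deduce the disconnection statement from the local domination in Proposition~\ref{prop:coupling} by tiling a fence around $K_N$ with mesoscopic boxes. First I would fix a collection of points $x_1,\dots,x_{m_N}$ in $\Gamma^N$ whose associated boxes $B_1^{x_j}=B_\infty(x_j,N^{r_1})$ form a ``blocking layer'' separating $K_N$ from infinity, in the sense that any nearest-neighbor path from $K_N$ to $\infty$ must cross at least one of the boxes $B_1^{x_j}$ from its center region out to $\partial B_1^{x_j}$. Since $\Gamma^N=\partial K_N^{\delta/2}$ is a discrete hypersurface at scale $N$ and each box has sidelength $2N^{r_1}\ll N$, one can choose $m_N=O(N^{(d-1)(1-r_1)})$ such boxes, and I would arrange (say by taking the $B_2^{x_j}=B_\infty(x_j,N^{r_2})$ to overlap suitably, using $2r_1<r_2$) that the union of the boxes genuinely surrounds $K_N^{\delta/2}$.

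Next I would use Proposition~\ref{prop:coupling} in each box. For each $j$, the coupling $\bar{Q}$ there shows that $\widetilde{\mathcal{I}}^{\,x_j}$, distributed as $\mathcal{I}^u\cap B_1^{x_j}$ under $\widetilde{\mathbb{P}}_N$, contains a set $\mathcal{I}_1^{x_j}$ distributed as $\mathcal{I}^{u_{**}+\epsilon/8}\cap B_1^{x_j}$, except on an event of probability at most $c_5 e^{-c_6 N^{c_7}}$. On the event $\{\widetilde{\mathcal{I}}^{\,x_j}\supset\mathcal{I}_1^{x_j}\}$, the vacant set $\mathcal{V}^u$ of the tilted interlacements inside $B_1^{x_j}$ is contained in the vacant set of standard interlacements at level $u_{**}+\epsilon/8>u_{**}$. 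By the defining property of $u_{**}$ in (\ref{eq:supercrit}), the probability that $\mathcal{V}^{u_{**}+\epsilon/8}$ connects $x_j$ (or the center block $B_1^{x_j}$ at scale, say, $\tfrac12 N^{r_1}$) to $\partial_i B_1^{x_j}$, across a box of radius $N^{r_1}$, is at most $c_2 e^{-c_3 N^{r_1 c_4}}$; so on the good coupling event, with very high probability the tilted vacant set also fails to cross $B_1^{x_j}$ radially. I would thus define, for each $j$, a ``bad box'' event $D_j$ (either the coupling failed, or the dominated interlacement nonetheless crossed) and bound $\widetilde{\mathbb{P}}_N[D_j]\le c e^{-c' N^{c''}}$ for a common positive $c''$ (the minimum of $c_7$ and $r_1 c_4$).

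Then a union bound gives $\widetilde{\mathbb{P}}_N\big[\bigcup_{j=1}^{m_N} D_j\big]\le m_N\, c e^{-c' N^{c''}}$, which tends to $0$ since $m_N$ is only polynomial in $N$. On the complementary event $\bigcap_j D_j^c$, every box $B_1^{x_j}$ in the fence is ``blocked'': its center region is not connected in $\mathcal{V}^u$ to $\partial B_1^{x_j}$. By the geometric choice of the boxes, any path in $\mathcal{V}^u$ from $K_N$ to infinity would have to traverse some $B_1^{x_j}$ radially, which is impossible on this event; hence $A_N$ occurs. This yields $\widetilde{\mathbb{P}}_N[A_N]\ge 1-m_N c e^{-c'N^{c''}}\to 1$, proving (\ref{eq:tiltedconv}).

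The main obstacle, and the step requiring care, is the purely geometric/combinatorial construction of the fence: one must verify that finitely many (polynomially many) mesoscopic boxes with centers on $\Gamma^N$ can be chosen so that blocking each one radially actually disconnects $K_N$ from infinity in the lattice. This requires knowing that $\Gamma^N$, being the boundary of the blow-up of the regular neighborhood $K^{\delta/2}$, has good enough regularity at scale $N$ that boxes of radius $N^{r_1}$ centered along it, suitably spaced using the hierarchy $2r_1<r_2$, cover a connected ``shell'' whose removal separates the inside from the outside. The probabilistic ingredients are all in place: the per-box coupling is Proposition~\ref{prop:coupling}, the stretched-exponential non-percolation above $u_{**}$ is (\ref{eq:supercrit}), and the polynomial count of boxes versus stretched-exponential per-box failure makes the union bound go through. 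One should also double-check that the events entering the union bound are measurable with respect to, and correctly identified under, $\widetilde{\mathbb{P}}_N$ (the coupling in Proposition~\ref{prop:coupling} is built box by box, but only the marginal law of $\mathcal{I}^u\cap B_1^{x_j}$ under $\widetilde{\mathbb{P}}_N$ is used, so no joint coupling across boxes is needed).
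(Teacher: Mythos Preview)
Your proposal is correct in its probabilistic core --- the coupling from Proposition~\ref{prop:coupling}, the stretched-exponential bound~(\ref{eq:supercrit}) above $u_{**}$, and a union bound against a polynomial number of boxes --- and these are exactly the ingredients the paper uses. The difference is that you introduce an unnecessary complication: the careful selection of a sparse family $x_1,\dots,x_{m_N}\in\Gamma^N$ whose boxes form a geometric ``fence'', which you yourself flag as the main obstacle.

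The paper sidesteps this entirely by taking the union bound over \emph{all} $x\in\Gamma^N$. The geometric input then becomes trivial: since $K_N\subset K_N^{\delta/2}$ for large $N$, any nearest-neighbor path from $K_N$ to infinity must contain some point $x\in\Gamma^N=\partial K_N^{\delta/2}$, and from that point the path must reach $\partial_i B_1^x$; hence
\[
A_N^c \subset \bigcup_{x\in\Gamma^N}\{x\overset{\mathcal{V}^u}{\longleftrightarrow}\partial_i B_1^x\}.
\]
One then bounds each summand exactly as you describe (coupling plus~(\ref{eq:supercrit})), and since $|\Gamma^N|$ is at most polynomial in $N$, the union bound still beats the stretched-exponential decay. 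So your approach works, but the ``regularity of the shell'' issue you worry about is an artifact of trying to use a sparse fence; taking all of $\Gamma^N$ removes it at no cost.
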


\begin{proof}
Note that for large $N$, when $K_{N}$ is connected to infinity by a nearest-neighbor
path, this path must go through the set $\Gamma^{N}$ at some point
$x$ (see above (\ref{eq:r1234choice})). Hence, this path connects $x$ to the inner boundary of $B_{1}^{x}$, so that
\begin{equation}\label{4.8}
A_{N}^{c}\subset\cup_{x\in\Gamma^{N}}\{x\overset{\mathcal{V}^{u}}{\longleftrightarrow}\partial_{i}B_{1}^{x}\}.
\end{equation}
Thus, we find that for large $N$
\begin{equation}\label{eq:probdecomp}
\widetilde{\mathbb{P}}_{N}[A_{N}^{c}]\leq\sum_{x\in\Gamma^{N}}\widetilde{\mathbb{P}}_{N}[x\overset{\mathcal{V}^{u}}{\longleftrightarrow}\partial_{i}B_{1}^{x}].
\end{equation}

\n
By Proposition \ref{prop:coupling}, for large $N$, uniformly in $x \in \Gamma^N$, we can bound the probability in the right-hand side of (\ref{eq:probdecomp}) as follows,
\begin{equation}
\begin{array}{lcl}
\widetilde{\mathbb{P}}_{N}[x\overset{\mathcal{V}^{u}}{\leftrightarrow}\partial_{i}B_{1}^{x}] & \!\!\! \overset{(\ref{eq:coupling})}{\leq} &  \!\!\!\mathbb{P}_{u_{**}+\frac{\epsilon}{8}}[x\overset{\mathcal{V}^{u_{**}+\frac{\epsilon}{8}}}{\longleftrightarrow}\partial_{i}B_{1}^{x}]+c_{5}e^{-c_{6}N^{c_{7}}}.
\\
 & \!\!\! \overset{(\ref{eq:supercrit})}{\leq} & \!\!\! ce^{-c'N^{\widetilde{c}}},
\end{array}
\end{equation}

\medskip\n
where the constants depend on $r_1, r_2, \epsilon$.

\medskip
Hence, we see that for large $N$,
\begin{equation}\label{4.12}
\widetilde{\mathbb{P}}_{N}[A_{N}^{c}]\leq|\Gamma^{N}|ce^{-c'N^{\widetilde{c}}}\underset{N}{\longrightarrow} 0.
\end{equation}

\medskip\n
This concludes the proof of Theorem \ref{thm:tiltedone}.
\end{proof}

\section{Denouement}

In this section we combine the various ingredients, namely Theorem \ref{thm:tiltedone},
Propositions \ref{prop:entropycalc} and \ref{prop:limsup}, and prove Theorem \ref{thm:lowerbound}.
\begin{proof}[Proof of Theorem \ref{thm:lowerbound}]
We recall the entropy inequality (see (\ref{eq:Entropychange})),
and apply it to $\mathbb{P}_{u}$ and $\widetilde{\mathbb{P}}_{N}$
defined in Sections 1 and 2. By Theorem \ref{thm:tiltedone}, we know
that  \\
$\lim_{N\to\infty}\widetilde{\mathbb{P}}_{N}[A_{N}]=1$, and (\ref{eq:Entropychange}) yields that
\begin{equation}\label{eq:entropychangeappli}
\liminf_{N\to\infty}\frac{1}{N^{d-2}}\log(\mathbb{P}_{u}[A_{N}])\geq-\limsup_{N\to\infty}\frac{1}{N^{d-2}}H(\widetilde{\mathbb{P}}_{N}|\mathbb{P}_{u}).
\end{equation}

\medskip\n
By Proposition \ref{prop:entropycalc}, we represent the right-hand
side of (\ref{eq:entropychangeappli}) as
\begin{equation}\label{5.2}
-\limsup_{N\to\infty}\frac{1}{N^{d-2}}H(\widetilde{\mathbb{P}}_{N}|\mathbb{P}_{u})=-(\sqrt{u_{**}+\epsilon}-\sqrt{u})^{2}\limsup_{N\to\infty}\frac{1}{N^{d-2}}\mathcal{E}_{\mathbb{Z}^{d}}(h_{N},h_{N}).
\end{equation}
Then, by Proposition \ref{prop:limsup}, taking consecutively the
limits $\eta\to0$, $r_U \to \infty$, and $\delta\to0$, and we obtain
\begin{equation}\label{5.3}
\liminf_{N\to\infty}\frac{1}{N^{d-2}}\log(\mathbb{P}_{u}[A_{N}])\geq- \dis\frac{1}{d} \;(\sqrt{u_{**}+\epsilon}-\sqrt{u})^{2}\mathrm{cap}_{\mathbb{R}^{d}}(K).
\end{equation}

\medskip\n
Finally, by taking $\epsilon\to0$ we obtain (\ref{eq:mainthm}) as
desired. 
\end{proof}

\begin{remark}\label{endremark} \rm ~

\medskip\n
1) It is an important question whether Theorem \ref{thm:lowerbound} can be complemented by a matching asymptotic upper bound, say when $K$ is a smooth
compact set. In view of Theorems 6.2 and 6.4 of \cite{LiSzni13} (see also Remark 6.5 2) of \cite{LiSzni13}), this would
indicate that the large deviations of the occupation-time profile of random interlacements,
insulating $K$ by values $u'$ of the local field (with $u'$ corresponding
to a non-percolative behaviour of $\mathcal{V}^{u'}$) capture the
main mechanism underlying the disconnection of a macroscopic body,
in the percolative regime of the vacant set.

\medskip\n
2) As $u\to0$, the right-hand side of (\ref{eq:mainthm})
tends to the finite limit $-\frac{u_{**}}{d}\mathrm{cap}(K)$.
One may wonder whether this limiting procedure retains any pertinence for the
study of the disconnection of the macroscopic body $K_{N}$ by a simple
random walk trajectory? For instance, does one have
\begin{equation}\label{5.4}
\liminf_{N\to\infty}\frac{1}{N^{d-2}}\log P_{0}\Big[\Big\{K_{N}\overset{Range\{(X_{t})_{t\geq0}\}^c}{\mbox{\Large $\longleftrightarrow$} \hspace{-3.5ex}/}\infty\Big\}\Big]\geq-\frac{u_{**}}{d}\mathrm{cap}_{\mathbb{R}^{d}}(K) ~?
\end{equation}
\hfill $\square$
\end{remark}

\end{document}